\newtheorem{theorem}{Theorem}
\newtheorem*{theorem*}{Theorem}
\newtheorem{lemma}[theorem]{Lemma}
\newtheorem{corollary}[theorem]{Corollary}
\newtheorem{proposition}[theorem]{Proposition}
\newtheorem*{problem*}{Problem}
\newtheorem{definition}[theorem]{Definition}
\theoremstyle{definition}
\newtheorem{remark}[theorem]{Remark}
\newtheorem{example}[theorem]{Example}
\theoremstyle{remark}
\newtheorem*{example*}{Example}
\newtheorem*{remark*}{Remark}
\crefname{lemma}{Lemma}{Lemmas}
\renewcommand{\bar}[1]{\overline{#1}}
\newcommand\CC{\mathbb C}
\newcommand\eps\varepsilon
\newcommand\NN{\mathbb N}
\def\placeholder{\,\cdot\,}
\renewcommand{\S}{{\mathcal S}}
\newcommand\restrictedto\upharpoonright
\newcommand\oo\infty
\newcommand\ox\otimes
\NewDocumentCommand\TC{o}{{\IfNoValueTF{#1}{\mathcal{T}(\mathcal{H})}{\mathcal{T}(#1)}}}
\newcommand\boundedoperatorsymbol{{\mathcal B}}
\NewDocumentCommand\BO{mo}{ \IfNoValueTF{#2}{\boundedoperatorsymbol(#1)}{\boundedoperatorsymbol(#1,#2)}}
\def\up#1{^{(#1)}}
\let\mc\mathcal
\renewcommand\limsup\varlimsup
\renewcommand\liminf\varliminf
\DeclareMathOperator\id{id}
\DeclareMathOperator{\Sp}{Sp}
\newcommand*\quotient[2]{{^{\textstyle #1}\big/_{\textstyle #2}}}
\let\lim\relax
\NewDocumentCommand\lim{o}{\IfNoValueTF{#1}{\mathop{\textup{lim}}}{{#1}\!-\!\mathop{\textup{lim}}}}
\DeclareFontFamily{U}{matha}{\hyphenchar\font45}
\DeclareFontShape{U}{matha}{m}{n}{ <-6> matha5 <6-7> matha6 <7-8> matha7 <8-9> matha8 <9-10> matha9 <10-12> matha10 <12-> matha12 }{}
\DeclareSymbolFont{matha}{U}{matha}{m}{n}
\DeclareFontFamily{U}{mathx}{\hyphenchar\font45}
\DeclareFontShape{U}{mathx}{m}{n}{ <-6> mathx5 <6-7> mathx6 <7-8> mathx7 <8-9> mathx8 <9-10> mathx9 <10-12> mathx10 <12-> mathx12 }{}
\DeclareSymbolFont{mathx}{U}{mathx}{m}{n}
\DeclareMathDelimiter{\vvvert} {0}{matha}{"7E}{mathx}{"17}%
\DeclarePairedDelimiterX{\normiii}[1]{\vvvert}{\vvvert} {\ifblank{#1}{\:\cdot\:}{#1}}
\DeclarePairedDelimiter\paren\lparen\rparen
\DeclarePairedDelimiterX\norm[1]\lVert\rVert{\ifblank{#1}{\placeholder}{#1}}
\DeclarePairedDelimiter\abs\lvert\rvert
\DeclarePairedDelimiterX\ip[2]{\langle}{\rangle}{#1 , #2}
\providecommand\given{}  
\newcommand{\SetSymbol}[1][]{ \nonscript\ #1\vert \allowbreak \nonscript\ \mathopen{} } 
\DeclarePairedDelimiterX{\set}[1]\{\}{ \renewcommand\given{\SetSymbol[\delimsize]} #1 }
\newcommand{\sbt}{\,\begin{picture}(-1,1)(-1,-2.5)\circle*{3}\end{picture}\,\,\, }
\renewcommand\limsup\varlimsup
\renewcommand\liminf\varliminf
\newcommand\A{\mathcal A}
\renewcommand\S{\mathcal S}     
\newcommand\T{\mathcal T}     
\newcommand\C{\mathbf{C}}      
\newcommand\nets{{\mathbf{N}}} 
\newcommand{\Q}{{\mathbf Q}}   
\def\oo{\infty}
\def\J{{\mathcal J}}
\let\d\relax
\def\blob{{\hspace{0.0pt}{\mathbin{{\hbox{\scalebox{0.7}{$\sbt$}}}}}\hspace{+0pt}}}
\def\d{_{\blob{\hspace{-1pt}}}} 
\def\j#1#2{j_{#1#2}}
\def\jt{\ensuremath{\j{}{}}}      
\def\jlim{\jt\mkern-1mu\hbox{-}\mkern-3mu\lim}
\def\ojt{{\ensuremath{\tilde\jmath}}}
\def\oj#1#2{\ojt_{#1#2}}
\def\ojlim{\ojt\mkern-0mu\hbox{-}\mkern-3mu\lim}
\def\klim{k\mkern-1mu\hbox{-}\mkern-3mu\lim}
\def\jnu#1#2{j^{(\nu)}_{#1#2}}
\def\jtnu{\ensuremath{\jnu{}{}}}   
\def\jnulim{\jtnu\mkern-1mu\hbox{-}\mkern-3mu\lim}
\def\jtmu{\ensuremath{\jnu{}{}}}   
\def\Cstar{\mathrm{C}^*}
\renewcommand\TC{{\mathfrak T}}
\newcommand\TT{{\mathcal T}}
\let\seminorm\normiii
\title{Soft inductive limits of operator systems and a noncommutative Lazar-Lindenstrauss theorem}
\author{Kristin Courtney\footnotemark[1], Niklas Galke\footnotemark[2], Lauritz van Luijk\footnotemark[3], Alexander Stottmeister\footnotemark[3]} 
\date{\footnotemark[1] University of Southern Denmark, Department of Mathematics and Computer Science,\\
Campusvej 55, 5230 Odense M,
Denmark\\[3pt]
\footnotemark[2] Universitat Autònoma de Barcelona, Departament de Física, GIQ, \\08193 Bellaterra, Barcelona, Spain \\[3pt]
\footnotemark[3] Leibniz University Hanover, Institut f\"ur Theoretische Physik,\\ Appelstrasse 2, 30167 Hannover, Germany
\\[11pt]
\today
}
\begin{document}

\maketitle

\begin{abstract}
    We establish a flexible generalization of inductive systems of operator systems, which relaxes the usual transitivity (or coherence) condition to an asymptotic version thereof and allows for systems indexed over arbitrary nets. 
    To illustrate the utility of this generalization, we highlight how such systems arise naturally from completely positive approximations of nuclear operator systems. 
    Going further, we utilize an argument of Ding and Peterson to show that a separable operator system is nuclear if and only if it is an inductive limit of matrix algebras, generalizing a classic Theorem of Lazar and Lindenstrauss to the setting of noncommutative Choquet theory.
\end{abstract}

\section{Introduction}
Inductive limit constructions have played a pivotal role in operator theory because they allow for the study of complicated objects via more elementary building blocks. 
This construction has been heavily utilized in $\Cstar$-algebras since Glimm \cite{Glimm} and has more recently gained significant interest in the study of operator systems, starting with Kirchberg's treatment in \cite{Kir95} and later and much more systematically in \cite{MT18}.

In physics, in particular, in quantum theory, inductive limits have a similarly long tradition, where they have been used to describe various limiting situations, such as the classical limit \cite{Hepp1974,werner_classical_1995}, the thermodynamic limit \cite{bratteli1987oa1,bratteli1997oa2,naaijkens_quantum_2017}, mean-field theory \cite{hepp_equilibrium_1973,hepp_phase_1973,RAGGIO1989,DUFFIELD1992,DUFFIELD1992b,drago_strict_2024}, scaling limits \cite{Evenbly2009,pfeifer_entanglement_2009,haegeman_calculus_2013,evenbly_real-space_2014,Milsted2018,osborne_quantum_2019,stottmeister_operator-algebraic_2021,morinelli_scaling_2021,van_nuland_strict_2022,osborne_conformal_2023} as well as large-scale entanglement structures \cite{van_luijk_multipartite_2025,luijk_large-scale_2025,chemissany_infinite_2025} (see \cite{jdynamics} for a general perspective). 

In this regard, it is also worth highlighting the interplay between mathematics and physics that has stimulated important results, notably Connes' classification of injective von Neumann algebras \cite{Powers1967,Araki1968,connes1976injective,haagerup_uniqueness_1987}, and fruitful research directions, for example, the recent investigations into conformal field theory and Thompson's groups \cite{Jones2017,Jones2017a,Jones2017b,Osborne2019,Brothier2019b,brothier_operator-algebraic_2020, aiello_spectral_2021,brothier_haagerup_2023}.

Also in Choquet theory, a classical result of Lazar and Lindenstrauss \cite{LL71}, which says that every Choquet simplex is the projective limit of finite-dimensional simplices, is obtained as a corollary to the dual inductive limit result for $\ell^\infty_n$ spaces. This has had a wide breadth of applications, particularly in the case of $\Cstar$-algebras whose trace spaces are Choquet simplices \cite{Thoma64,sakai_c-algebras_1998}. 

In order to broaden the scope of inductive limits, one can demand more flexibility in the construction, 
leading to generalized inductive limit constructions starting with Blackadar and Kirchberg in \cite{blackadar1997generalized} and seen much more recently in \cite{jdynamics,CW1,C23}: The $^*$-homomorphic connecting maps of classical inductive systems are replaced with maps that only asymptotically preserve the $\Cstar$-algebraic structure. 
The inductive systems considered in \cite{blackadar1997generalized,jdynamics,CW1,C23} have completely positive connecting maps and hence (in the unital setting) can actually be seen as inductive systems of operator systems which are (completely order isomorphic to) $\Cstar$-algebras (\cite[Theorem 3.7]{C23}, \cite[Proposition 2.7]{CW1}). 
These are of independent interest in the operator system setting because they can characterize when certain operator systems are completely order isomorphic to $\Cstar$-algebras. Nonetheless, because these completely positive maps are asymptotically encoding a $\Cstar$-algebraic structure, the generalizations in \cite{blackadar1997generalized,jdynamics,CW1,C23} are not applicable to operator systems at large.  


A further relaxation of the inductive limit construction was considered in \cite{jdynamics}, building on earlier work in \cite{DUFFIELD1992,werner_classical_1995}, as it is too restrictive to allow for an effective description of certain limits in physical theories. This form of generalized inductive limits was coined \emph{soft inductive} systems by the third and fourth authors, along with R.~F.~Werner.
Soft inductive systems replace generalized inductive sequences with nets and relax the usual transitivity condition $\j nm \j ml =\j nl$, $n<m<l$, (called \emph{coherenence} in \cite{blackadar1997generalized}) to asymptotic transitivity. 
Like inductive systems, they can be formulated in various categories and are compatible with additional structures. The softness criterion is adaptable to the operator system setting, and in fact, as Theorem \ref{thm:nuclearity_iff} shows, is already a naturally occurring phenomenon therein. The first aim of this article is to initiate the study of soft inductive limits of operator systems.

Our second objective is to apply our results to noncommutative Choquet theory, which stems from the work of Wittstock, Effros, Webster, and Winkler \cite{Wit81,Wit84,EW97,WW99} and has since become an increasingly fruitful perspective in operator systems \cite{Davidson2016, DK21, DK24,KKM,KS22}.  
Here, We contribute to the theory by providing a noncommutative analogue to the aforementioned classical result of Lazar and Lindenstrauss 
along with its converse due to Jellet \cite{J68} and Davies--Vincent-Smith \cite{DVS68}.

To set up this generalization, we must first translate the classical result into the operator system setting. For a compact, convex, metrizable set $K$, we consider the $^*$-vector space of its continuous $\mathbb{C}$-valued affine functions $A(K)$. 
This is a function system, meaning an Archimedean order unit $^*$-vector space, which we also view as an operator subsystem of $C(K)$.\footnote{Metrizability translates to the separability of $C(K)$ and $A(K)$. }
As proved in \cite{VT09}, over $\mathbb{C}$ we still have Kadison's representation theorem as enjoyed in the real setting: $S(A(K))$ is affinely homeomorphic to $K$ via point evaluations. Hence dualizing the theorem of Lazar and Lindenstrauss (and its converse) over $\mathbb{C}$ tells us that a compact, convex, metrizable space $K$ is a simplex if and only if $A(K)$ is the inductive limit of a sequence of affine function spaces of finite-dimensional simplices, $A(\Delta^n)$, which we identify with $\mathbb{C}^{n+1}$. 
To take this one step further into operator system language, we recall a theorem of Namioka and Phelps \cite{NP69} which says that a compact, convex metrizable $K$ is a simplex precisely when $A(K)$ is nuclear in the function system category. Thus, the theorem of Lazar and Lindenstrauss and its converse can be restated as: a separable function system $A(K)$ is nuclear precisely when it is the inductive limit of some sequence $(\mathbb{C}^{n_k})_k$. 

Moving out of the classical picture, there is a well-established notion of noncommutative (nc) convex sets, which has been developed over the years in \cite{Wit81,Wit84,EW97,WW99} and more recently in \cite{DK24,Davidson2016,DK21,KS22,KKM}.
For a detailed discussion, we refer to see \cref{sect. nc}.
In the following, we briefly sketch the concept, guided by Kadison's representation theorem in \cite{DK24}:

The categorical duality is between closed operator systems with unital completely positive maps and compact nc convex sets with continuous nc affine functions. 
In \cite{KS22}, Kennedy and Shamovich expanded the theory by introducing the noncommutative analogue of Choquet simplices. They proved that these correspond to what Kirchberg dubbed $\Cstar$-systems, i.e., operator systems whose double dual is a von Neumann algebra, in particular, including all nuclear operator systems (by deep results of  \cite{CE77,EOR01,Kir95}). 

\subsection{Summary of results}

\cref{sec:soft_ind} establishes the necessary framework for soft inductive systems for normed spaces, which is then adapted to operator systems in \cref{sec:op_sys}. 
In \cref{sec:nuclear}, we show that every nuclear operator system is a soft inductive limit of a net of finite-dimensional von Neumann algebras and that this can be strengthened to a (strict) inductive limit in the separable setting.
As a converse to the latter, we prove in Theorem~\ref{thm: strict iff} that a separable operator system is nuclear if and only if it is the (strict) inductive limit of finite-dimensional von Neumann algebras.

In \cref{sect. nc}, we interpret Theorem~\ref{thm: strict iff} in the setting of noncommutative Choquet Theory by exploiting the aforementioned duality between closed operator systems and compact nc convex sets:
The dual of an operator system $S$ is its noncommutative (nc) state space (consisting of all ucp maps $S\to M_n(\mathbb{C})$ for all $n$ up to a sufficiently large cardinal). 
According to \cite{KS22}, a compact nc convex set is a Choquet simplex, precisely when it is the nc state space of a $\Cstar$-system $\mathcal{S}$. 
Combining \cite{CE77,EOR01,Kir95}, these notably include the nc state spaces of nuclear operator systems, which we call \emph{nuclear nc simplices}. Among these are the nc state spaces of finite-dimensional von Neumann algebras, which serve as the noncommutative analogue of a finite-dimensional simplex. 
Thus, by dualizing Theorem~\ref{thm: strict iff}, we to obtain our noncommutative analogue to the Lazar-Lindenstrauss theorem (Corollary~\ref{cor: strict iff*}): (under suitable separability assumptions) a compact nc convex set is a nuclear nc simplex if and only if it is the projective limit of nc state spaces of finite-dimensional von Neumann algebras.

\null

\paragraph{Acknowledgements.}
The authors thank Vern Paulsen, Matthew Kennedy, and Jesse Peterson for helpful discussions.
The authors thank Reinhard F.\ Werner for suggesting Lemma~\ref{lem:B-space limit}.

\paragraph{Funding.} 
AS and LvL have been funded by a Stay Inspired Grant of the MWK Lower Saxony (Grant ID: 15-76251-2-Stay-9/22-16583/2022). KC was supported by the Independent Research Fund Denmark through DFF grant 1026-00371B and through the Inge Lehmann Grant 10.46540/4303-00003B.

\section{Soft inductive systems}\label{sec:soft_ind}

Soft inductive systems were introduced in \cite{jdynamics} as a flexible generalization of inductive systems.
The core difference to inductive limits is that the transitivity condition $\j nm \j ml =\j nl$, $n<m<l$, is relaxed to asymptotic transitivity.
Like inductive systems, they can be formulated in various categories and are compatible with additional structures.
We briefly summarize the framework here and refer to \cite{jdynamics} for more details. 
Since we want to generalize to the category of operator systems (where it is desirable not to assume norm-completeness), we start by discussing soft inductive systems of normed spaces.
We will then consider the categories of operator systems and $\Cstar$-algebras.

\subsection{Normed spaces}\label{sec:norm_sp}

\begin{definition}
    A {\bf soft inductive system} $(E,j)$ of normed spaces over a directed set $(N,\le)$ is a net of normed spaces $\{E_n\}_{n\in N}$ together with linear contractions $\j nm:E_m\to E_n$ defined if $n>m$, which are asymptotically transitive:
    \begin{equation}
        \lim_{n\gg m} \norm{(\j nl-\j nm\j ml)a_l} = 0, \qquad l\in N,\ a_l\in E_l,
    \end{equation}
    where we use the notation $\lim_{n\gg m}$ as a shorthand for $\lim_m \limsup_n$.
    If each $E_n$ is a Banach space, we call $(E,j)$ a soft inductive system of Banach spaces. 
    A {\bf soft inductive sequence} of normed (Banach) spaces is a soft inductive system $(E,j)$ with index set $N=\NN$ (with the usual order). 
\end{definition}

We denote by $\nets(E\d)$ the space of uniformly bounded nets $a\d = (a_n)_{n\in N}$, $a_n\in E_n$ the norm $\norm{a\d}_\nets=\sup_n \norm{a_n}$ (this does not depend on the connecting maps $j_{nm}$).
In other words, $\nets(E\d)$ is nothing but the direct product: $\nets(E\d)=\prod_{n\in N}E_n$.
If necessary, we extend the connecting maps $\j nm$ to all indices by setting $\j nn \coloneqq \id_{E_n}$ and $\j nm=0$ if $n\ngeq m$.
Nets of the form $(\j nl a_l)_{n\in N}=:\j\blob l a_l$, $a_l\in E_l$, are called \emph{basic nets} and form a particularly important subset of $\nets(E\d)$.\footnote{The extension of the connecting maps to indices $n\ngeq m$ ensures that basic nets are elements of $\nets(\A\d)$.}
A net $x\d\in\nets(\A\d)$ is \emph{\jt-convergent} if 
\begin{equation}\label{eq:j_convergent}
    \lim_{n\gg m}\, \norm{a_n -\j nm a_m} =0
\end{equation}
and we write $\C(E\d,j)$ for the space of \jt-convergent nets.
\cref{eq:j_convergent} can be read as stating that basic nets are \jt-convergent.
Another class of nets $a\d\in\nets(E\d)$ for which \jt-convergence is immediate are \emph{null nets}, defined as nets $a\d$ such that $\lim_n\norm{a_n}=0$.
Null nets form a closed subspace, which we denote by $\C_0(\A\d)$.
$\nets(\A\d)$ is equipped with a natural seminorm
\begin{equation}\label{eq:seminorm}
    \seminorm{a\d} \coloneqq \limsup_n \,\norm{a_n} = \inf_{b\d\in\C_0(E\d)}\ \norm{a\d+b\d}_\nets.
\end{equation}
The second equality is proved in \cite[]{jdynamics}. 
Using this seminorm, we can rewrite \cref{eq:j_convergent} as $\lim_m \seminorm{a\d-\j\blob m a_m}=0$.
It can be checked from \cref{eq:j_convergent} that the limit $\lim_n \norm{a_n}$ exists for all $a\d\in\C(E\d,j)$ (and, hence, agrees with the seminorm).
Clearly, the null-space of this seminorm is precisely the space of null nets $\C_0(E\d)$.
The \emph{limit space} of a soft inductive system of normed spaces is constructed as the quotient
\begin{equation}\label{eq:j_}
    E_\oo =\quotient{\C(E\d,j)}{\C_0(E\d)}\ .
\end{equation}
As the quotient of a normed space by a closed subspace $E_\oo$ is itself a normed space and, by \eqref{eq:seminorm}, the norm is equal to $\norm{a_\oo}= \seminorm{a\d}$, $a\d\in\C(E\d,j)$.
The canonical contraction of $\C(E\d,j)$ onto the quotient $E_\oo$ will be denoted $\jlim$ and will often denote $\jlim a\d$ as $\jlim_n a_n$ or simply $a_\oo$.
Furthermore, we obtain induced contractions $\j\oo n:E_n\to E_\oo$ for all $n$, defined as the \jt-limit of basic nets starting at $n$, i.e.,
\begin{equation}\label{eq:j maps}
    \j\oo n: E_n \to E_\oo,\ a_n \mapsto \jlim_n \j\oo n a_n.
\end{equation}
It then follows that $(\j\oo n a_n)_n$ is a Cauchy net for all $a\d\in\C(E\d,j)$ which converges to $\jlim_n a_n$.
If each $E_n$ is a Banach space, then $E_\oo$ is also a Banach space (since it is the quotient of a Banach space by a closed subspace).
Let us mention the following, rather trivial, example:

\begin{example}[Completion]
    Let $E$ be a normed space, $N=\NN$ (or any other directed set not containing a largest element), and $\j nm = \id_E$.
    Then $\nets(E\d) = \ell^\oo(N,E)$ is the set of bounded sequences, $\C(E\d,j)=c(N,E)$ is the set of all Cauchy sequences in $E$ and $\C_0(E\d)=c_0(N,E)$ is the set of null sequences.
    Therefore, the above construction of the limit space $E_\oo=\C(E\d,j)/\C_0(E\d)$ amounts to Cauchy sequences modulo null sequences and we see that the limit space is the completion
    \begin{equation}
        E_\oo = \bar E.
    \end{equation}
\end{example}

In fact, the limit space will always be a Banach space. 
This follows from \cite{jdynamics} and the following Lemma, due to Reinhard F.\ Werner:

\begin{lemma}\label{lem:B-space limit}
    Let $(E,j)$ be a soft inductive system of normed spaces. 
    Denote by $\bar E_n$ the completion of $E_n$ and by $\bar j_{nm}:\bar E_m\to \bar E_m$ the continuous extension of $\j nm$.
    Then $(\bar E,\bar j)$ is a soft inductive system of Banach spaces, and there exists an isometric isomorphism $E_\oo \cong \bar E_\oo$ such that $\jlim_n a_n = \bar j\!\text-\!\lim_n a_n$, $a\d\in\C(E\d,j)$.
\end{lemma}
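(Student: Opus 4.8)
The plan is to push the whole construction through the canonical isometric inclusions $E_n\hookrightarrow\bar E_n$, obtain from them an isometric map $\iota\colon E_\oo\to\bar E_\oo$ intertwining the two limit operations, and then show that $\iota$ is onto. That $(\bar E,\bar j)$ is again a soft inductive system of Banach spaces is almost immediate: each $\bar j_{nm}$ is the continuous extension of the contraction $\j nm$ and hence is itself a contraction, so only asymptotic transitivity requires an argument. Fixing $l$ and $a_l\in\bar E_l$, I would choose $a_l'\in E_l$ with $\norm{a_l-a_l'}<\eps$; since $\bar j_{nl}$, $\bar j_{nm}$ and $\bar j_{ml}$ are contractions, $\norm{(\bar j_{nl}-\bar j_{nm}\bar j_{ml})(a_l-a_l')}\le 2\eps$, while $(\bar j_{nl}-\bar j_{nm}\bar j_{ml})a_l'$ obeys the asymptotic transitivity of $(E,j)$. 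Taking $\lim_{n\gg m}$ and then $\eps\to 0$ yields asymptotic transitivity for $(\bar E,\bar j)$.

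To build $\iota$, I note that the componentwise inclusions assemble into an isometric embedding $\nets(E\d)\hookrightarrow\nets(\bar E\d)$ preserving both $\norm{\placeholder}_\nets$ and the seminorm $\seminorm{\placeholder}$, since both depend only on the (unchanged) component norms. Because $\bar j_{nm}$ agrees with $\j nm$ on $E_m$, a net with components in the $E_n$ is $\jt$-convergent precisely when it is $\bar j$-convergent; hence this embedding carries $\C(E\d,j)$ into $\C(\bar E\d,\bar j)$ and $\C_0(E\d)$ into $\C_0(\bar E\d)$, and therefore descends to a map $\iota\colon E_\oo\to\bar E_\oo$ with $\iota(\jlim_n a_n)=\bar j\!\text-\!\lim_n a_n$. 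Being induced by a seminorm-preserving map, $\iota$ is automatically isometric, in particular injective.

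For surjectivity I would first establish that $\iota$ has dense range. Given $b\d\in\C(\bar E\d,\bar j)$ and $\eps>0$, $\bar j$-convergence supplies an index $m$ with $\seminorm{b\d-\bar j_{\blob m}b_m}<\eps$, and density of $E_m$ in $\bar E_m$ yields $c_m\in E_m$ with $\norm{b_m-c_m}<\eps$. The basic net $\j\blob m c_m$ then lies in $\C(E\d,j)$, and since the $\bar j_{nm}$ are contractions its image $\bar j_{\oo m}c_m=\iota(\jlim_n \j nm c_m)$ is within $2\eps$ of $\bar j\!\text-\!\lim_n b_n$, proving density.

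The remaining and genuinely delicate step is to upgrade dense range to surjectivity. Here the main obstacle is to produce, for a given $b\d\in\C(\bar E\d,\bar j)$, an honest representative with components in the $E_n$: one must select $a_n\in E_n$ so that $\lim_n\norm{a_n-b_n}=0$, i.e.\ so that $a\d-b\d$ is a null net and hence defines the same class in $\bar E_\oo$, while keeping $a\d$ itself $\jt$-convergent. I would construct such an $a\d$ by a diagonal refinement along the net, sharpening the approximation of $b_n$ by elements of $E_n$ as $n$ increases; together with the completeness of $\bar E_\oo$ (the soft limit of a system of Banach spaces is complete by \cite{jdynamics}), this identifies $\iota$ as an isometric isomorphism and so makes $E_\oo$ a Banach space, as anticipated in the remark preceding the lemma. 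The careful part is managing the interplay between the directed index set and the $\limsup$ defining the seminorm, so that the perturbation is simultaneously null and $\jt$-convergence-preserving; this is exactly where I expect the argument to demand the most attention.
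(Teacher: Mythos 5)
Your route is the paper's route: extend to $(\bar E,\bar j)$, push the componentwise inclusions through to an isometric map $E_\oo\to\bar E_\oo$, then argue surjectivity. The first two stages are correct (your verification of asymptotic transitivity for $(\bar E,\bar j)$ by an $\eps/3$-argument is in fact more careful than the paper, which simply calls this clear). But the proposal has a genuine gap: the one step that carries the content of the lemma --- producing, for $a\d\in\C(\bar E\d,\bar j)$, a $\jt$-convergent net in the $E_n$'s differing from $a\d$ by a null net --- is only announced (``a diagonal refinement along the net'') and never performed. Your density paragraph does not help close it: since completeness of $E_\oo$ is exactly what is at stake, dense range cannot be upgraded to surjectivity, and the paper never passes through density at all. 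The paper's completion of your sketch is short: fix a strictly positive net $\eps\d$ with $\lim_n\eps_n=0$ (for $N=\NN$, take $\eps_n=2^{-n}$) and choose $b_n\in E_n$ with $\norm{a_n-b_n}<\eps_n$, so that $a\d-b\d\in\C_0(\bar E\d)$; then
\[
\seminorm{b\d-\j\blob m b_m}\le\seminorm{b\d-a\d}+\seminorm{a\d-\bar j_{\blob m}a_m}+\seminorm{\bar j_{\blob m}(a_m-b_m)}\le\seminorm{a\d-\bar j_{\blob m}a_m}+\norm{a_m-b_m},
\]
and the right-hand side vanishes in the limit over $m$ by $\bar j$-convergence of $a\d$. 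So the ``interplay'' you worry about is vacuous: \emph{every} null perturbation of $a\d$ with components in the $E_n$'s is automatically $\jt$-convergent, and no diagonalization is needed --- only this two-line estimate, which you must supply to have a proof.

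Your instinct that something delicate hides in this step is nevertheless sound, just located elsewhere: the real issue is the existence of the strictly positive null net $\eps\d$ over the directed set $N$. Such a net exists precisely when $N$ has a countable cofinal subset and no greatest element; over $N=\omega_1$, for instance, it does not (any positive net tending to $0$ over $\omega_1$ would have to be eventually $0$), and there the statement itself fails: for $E_n=E$ a dense proper subspace of a Banach space with $j_{nm}=\id_E$, every $\jt$-convergent net over $\omega_1$ is eventually constant, so $E_\oo\cong E$ while $\bar E_\oo\cong\bar E$, and the canonical map is not onto. The paper's proof elides this point as well (it asserts the $b_n$ with $\lim_n\norm{a_n-b_n}=0$ exist ``because $E_n\subset\bar E_n$ is dense,'' but density only yields each $b_n$ to any prescribed tolerance, not a null net of tolerances). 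So your approach, once completed by the displayed estimate, proves the lemma exactly in the generality in which it is true --- countably cofinal index sets, in particular $N=\NN$ --- and no cleverer diagonal refinement removes that hypothesis.
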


\begin{proof}
    It is clear that $(\bar E,\bar j)$ is a soft inductive system of Banach spaces.
    Hence, its limit space $\bar E_\oo$ is a Banach space ($\bar E_\oo$ denotes the limit space of $\bar E,\bar j)$, not the completion of $E_\oo$). 
    The embeddings $E_n\hookrightarrow \bar E_n$ induce isometric embeddings of $\nets(E\d)$, $\C(E\d,j)$ and $\C_0(E\d)$ into $\nets(\bar E\d)$, $\C(\bar E,\bar j)$ and $\C_0(\bar E\d)$. Since $\C_{0}(\bar{E}\d)=\overline{\C_{0}(E\d)}$, we get an embedding $E_\oo\hookrightarrow\bar E_\oo$.
    Given $a\d \in \C(\bar E,\bar j)$, we find for each $n\in N$ an element $b_n\in E_n$ such that $\seminorm{a\d-b\d}=\lim_n \norm{a_n-b_n}\to 0$ and $\sup_n \norm{b_n}<\oo$ because $E_{n}\subset\bar{E}_{n}$ is dense, i.e., $a\d-b\d\in\C_{0}(\bar{E},\bar{j})$, which implies that $b\d\in\C(E\d,j)$:
    \begin{align}
        \seminorm{b\d-\j\blob m b_{m}} & \leq \seminorm{b\d-a\d}+\seminorm{a\d-\bar{j}_{\blob m} a_{m}}+\seminorm{\bar{j}_{\blob m}(a_{m}-b_{m})} \nonumber \\
        & \leq \seminorm{a\d-\bar{j}_{\blob m} a_{m}} + \norm{a_{m}-b_{m}}_{m}.
    \end{align}
    By construction $a\d$ and $b\d$ differ by an element of $\C_{0}(\bar{E}\d)$, and we have $a_\oo = \bar{b}_\oo \in \bar{E}_\oo$, where we denote $\bar{b}_{\oo}=\bar{j}\!\!\;\text{-}\!\lim_{n}b_{n}$. Since $a\d$ was arbitrary, this shows $E_\oo = \bar E_\oo$.
\end{proof}

Given a soft inductive system of normed spaces, we denote 
\begin{equation}
    \Q(E\d) := \nets(E\d)/\C_0(E\d),
\end{equation}
and write $[a\d]$ for the image of $a\d\in\nets(E\d)$ in the quotient.
This is a Banach space in the quotient norm
\begin{equation}
    \norm{[a\d]} =\limsup_n \,\norm{a_n} = \inf_{b\d\in\C_0(E\d)}\ \norm{a\d+b\d}_\nets, \qquad a\d\in\nets(E\d).
\end{equation}
We define contractive linear maps  $\j\oo n\colon E_n\to \Q(E\d)$ by\begin{equation}\label{eq:j maps'}
    \j\oo n(x)=[j_{\blob n}(x)],\ x\in E_n
\end{equation} 
and set 
\begin{equation}\label{eq:same limits}
    E'_\infty:=\overline{\bigcup_n \j\oo n(E_n)}\subset \Q(E\d).
\end{equation}

Since $\C_0(E\d)$ and $\C(E\d,j)$ are closed in $\nets(E\d)$, we can identify $E_\infty=\C(E\d,j)/\C_0(E\d)$ isometrically as a closed subspace of $\Q(E\d)$ and view the map $\jlim: \C(E\d,j)\to E_\infty$ as the restriction of the quotient map $\nets(E\d) \to \Q(E\d)$. Then the maps $\j\oo n$ 
from \eqref{eq:j maps} and \eqref{eq:j maps'} agree.
It follows that $E'_\infty=\overline{\bigcup_n \j\oo n(E_n)}\subset E_\infty$. On the other hand, if  $x_\blob\in \C(E\d,j)$, then  
$[x_\blob]= \lim_n \j\oo n(x_n)\in \overline{\bigcup_n \j\oo n(E_n)}$.  Hence, $E'_\infty$ and $E_\infty$ agree as normed linear spaces and thus by Lemma~\ref{lem:B-space limit} as Banach spaces. Moreover, $\C(E\d,j)$ is the pre-image of $E'_\infty$ under the quotient map $\nets(E\d)\to \Q(E\d)$, and (from \cite[Eq.~(8)]{jdynamics}) $[x_\blob]=\jlim_n x_n$ for $x_\blob\in \C(E\d,j)$.

The limit space of a soft inductive system can be characterized uniquely by a universal property \cite[Prop.~13,14]{jdynamics}:

\begin{proposition}[Universal property]\label{prop:univ_prop}
    Let $(E,j)$ be a soft inductive system of normed spaces.
    Then $(E_\oo,\j\oo\blob)$ enjoys the following property:
    If $\alpha_n:E_n \to F$ is a net of linear contractions into some Banach space $F$ that maps \jt-convergent nets to Cauchy nets, then there exists a unique linear contraction $\alpha_\oo:E_\oo\to F$ such that $\alpha_\oo(\jlim_na_n)=\lim_n \alpha_n(a_n)$, $a\d\in\C(E\d,j)$.

    Let $\tilde E_\oo$ be a Banach space and $\oj\oo n:E_n\to\tilde E_\oo$ be linear contractions taking \jt-convergent nets to Cauchy nets. If the above property holds for $(\tilde E_\oo,\oj\oo \blob)$, then there exists an isometric isomorphism $\psi:E_\oo\to\tilde E_\oo$ with $\psi(\jlim_n a_n) = \lim_n \oj\oo n a_n$ extends to an 
\end{proposition}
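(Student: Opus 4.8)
The plan is to prove the two assertions in turn: first the existence and uniqueness of $\alpha_\oo$, and then the uniqueness of the limit object by the standard back-and-forth argument with the universal property. For the construction, every element of $E_\oo=\C(E\d,j)/\C_0(E\d)$ is of the form $\jlim_n a_n$ with $a\d\in\C(E\d,j)$, so I am forced to set
\begin{equation}
    \alpha_\oo(\jlim_n a_n):=\lim_n\alpha_n(a_n).
\end{equation}
The right-hand limit exists because, by hypothesis, $(\alpha_n(a_n))_n$ is a Cauchy net in the Banach space $F$. For well-definedness I would take two $\jt$-convergent nets with $\jlim_n a_n=\jlim_n a'_n$; then $a\d-a'\d\in\C_0(E\d)$ is a null net, and since each $\alpha_n$ is a contraction, $\norm{\alpha_n(a_n)-\alpha_n(a'_n)}\le\norm{a_n-a'_n}\to 0$, so the two limits agree. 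Linearity is inherited from the $\alpha_n$; and using that $\lim_n\norm{a_n}$ exists and equals $\seminorm{a\d}=\norm{\jlim_n a_n}$ for $\jt$-convergent nets, contractivity follows from
\begin{equation}
    \norm{\alpha_\oo(\jlim_n a_n)}=\lim_n\norm{\alpha_n(a_n)}\le\limsup_n\norm{a_n}=\seminorm{a\d}=\norm{\jlim_n a_n}.
\end{equation}
Uniqueness is immediate, since the defining relation determines $\alpha_\oo$ on all of $E_\oo$.

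For the uniqueness of the limit I would apply the property just established to the net $\alpha_n:=\oj\oo n\colon E_n\to\tilde E_\oo$ — these are contractions sending $\jt$-convergent nets to Cauchy nets by assumption — obtaining a unique contraction $\psi\colon E_\oo\to\tilde E_\oo$ with $\psi(\jlim_n a_n)=\lim_n\oj\oo n a_n=\ojlim_n a_n$. Symmetrically, the maps $\j\oo n\colon E_n\to E_\oo$ send $\jt$-convergent nets to the Cauchy nets $(\j\oo n a_n)_n$ (as recorded just after \eqref{eq:j maps}), so the universal property assumed for $(\tilde E_\oo,\oj\oo\blob)$ yields a unique contraction $\phi\colon\tilde E_\oo\to E_\oo$ with $\phi(\ojlim_n a_n)=\lim_n\j\oo n a_n=\jlim_n a_n$.

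Finally I would verify that $\phi$ and $\psi$ are mutually inverse. Composing gives $(\phi\circ\psi)(\jlim_n a_n)=\phi(\ojlim_n a_n)=\jlim_n a_n$, so $\phi\circ\psi$ is a contraction $E_\oo\to E_\oo$ satisfying exactly the relation that, by the uniqueness clause of the universal property of $E_\oo$ applied to $\alpha_n=\j\oo n$, characterizes $\id_{E_\oo}$; hence $\phi\circ\psi=\id_{E_\oo}$. The same computation with the roles reversed gives $(\psi\circ\phi)(\ojlim_n a_n)=\ojlim_n a_n$, and the uniqueness clause for $(\tilde E_\oo,\oj\oo\blob)$ forces $\psi\circ\phi=\id_{\tilde E_\oo}$. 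Thus $\psi$ is a bijective contraction with contractive inverse, and $\norm{x}=\norm{\phi\psi x}\le\norm{\psi x}\le\norm{x}$ shows it is isometric while intertwining the limits as claimed. The only point needing real care is bookkeeping the characterizing equations — in particular recognizing that the element of $\tilde E_\oo$ attached to a $\jt$-convergent net is $\ojlim_n a_n=\lim_n\oj\oo n a_n$ — together with ensuring in the first part that completeness of $F$ and contractivity of the $\alpha_n$ are precisely what make $\alpha_\oo$ well defined and norm-nonincreasing.
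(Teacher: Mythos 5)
Your proof is correct and takes essentially the same route as the paper, which obtains existence from the identical limit-contraction construction (stated there as an observation about nets of maps between systems, specialized to a constant target) and defers the back-and-forth uniqueness argument you spell out to \cite[Prop.~14]{jdynamics}. The only step you leave implicit is that $E_\oo$ is itself complete (guaranteed by Lemma~\ref{lem:B-space limit} and the surrounding discussion), which is needed to apply the assumed universal property of $(\tilde E_\oo,\oj\oo\blob)$ with target $F=E_\oo$.
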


The universal property follows from the following immediate observation:
If $(E,j)$ and $(F,k)$ are soft inductive systems of normed spaces over the same directed set and if $\alpha_n:E_n\to F_n$ is a net of linear contractions that takes \jt-convergent to $k$-convergent nets, there exists a limit contraction $\alpha_\oo:E_\oo\to F_\oo$ such that $\alpha_\oo(\jlim_na_n)=k\text-\!\lim_n\alpha_n(a_n)$.
The proof of uniqueness can be taken from \cite[Prop.~14]{jdynamics}.

For emphasis, we will refer to normal inductive systems, i.e., soft inductive systems such that $\j nm \j ml = \j nl$ for all $n>m>l$, as \emph{strict} inductive systems.
A soft inductive system is called isometric (resp.\ asymptotically isometric) if the connecting maps are isometries (resp.\ if 
\begin{equation}\label{eq:asymptotically_isometric}
    \lim_{n\gg m} \bigg( \inf_{a_m \in E_m \,:\,\norm{a_m}=1}\, \norm{\j nm a_m}\bigg)  =1 \, ).
\end{equation}
For an asymptotically isometric system, \jt-convergence of a bounded net $a\d$ is equivalent to convergence of $\j\oo n a_n$ in the limit space \cite{jdynamics}, i.e.,
\begin{equation}\label{eq:asymptotically_isometric2}
    a\d \text{ is \jt-convergent}\ \iff\ (\j\oo n a_n)_n \subset E_\oo \text{ is Cauchy},
\end{equation}
and the assumption of asymptotic isometricity cannot be removed altogether (see \cite[Footnote 1]{jdynamics}).

A \emph{split} inductive system of normed spaces is a triple $(E,j,s)$ consisting of a soft inductive system $(E,j)$ together with a linear contraction $s\d : E_\oo\to\C(E\d,j)$ that is a right-inverse to $\jlim : \C(\A\d,j)\to E_\oo$, i.e., satisfies
\begin{equation}
    \jlim_n s_n(a_\oo) = a_\oo,\qquad a_\oo \in E_\oo.
\end{equation}
In this case, one can define an equivalent family of connecting maps%
    \footnote{Let $(E,j)$ be a soft inductive system. Another family of connecting $\oj nm: E_n\to E_m$ is equivalent if \jt-convergence is equivalent to \ojt-convergence (defined as in \cref{eq:j_convergent} with the map $\oj nm$). In this case, $(E,\ojt\,)$ is also a soft inductive system with the same limit space.} 
by setting $\oj nm = s_n\circ \j\oo m$ (which makes sense for all $n,m$, i.e., without the condition $n\ge m$).

\begin{definition}
  Let $(E,j)$ be a soft inductive system of normed spaces, and let $N_0\subset N$ be a cofinal subset of the index set $(N,\leq)$. Then we call the net $\{E_n\}_{n\in N_0}$ along with the corresponding maps $j_{nm}$ a {\bf soft inductive subsystem}.
\end{definition}

One could give a more general definition of subsystems by generalizing how subnets of nets are defined, i.e., by allowing the index set of the subsystem to be a general directed set $(N',\le)$ together with a monotone cofinal map $f:N'\to N$.
We will, however, not need this level of generality here.

\begin{lemma}\label{lem:subsystem limit}
    Let $(E,j)$ be a soft inductive system of normed spaces and let $N_0\subset N$ be cofinal.
    Then, the limit space of the subsystem over $N_0$ is isometrically isomorphic with $E_\oo$.
\end{lemma}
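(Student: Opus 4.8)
The plan is to show that restricting a soft inductive system to a cofinal subset $N_0$ does not change the limit space, by exhibiting a natural isometric isomorphism between the two limit spaces. Write $(E', j')$ for the restricted system over $N_0$, with limit space $E'_\oo$. First I would set up the obvious comparison maps. There is a restriction map $\rho: \nets(E\d)\to\nets(E'\d)$ sending a net $a\d=(a_n)_{n\in N}$ to its restriction $(a_n)_{n\in N_0}$, which is plainly a contraction (in fact the seminorms agree on restrictions, since a $\limsup$ over a directed set equals the $\limsup$ over any cofinal subset). The main content is to check that $\rho$ carries $\C(E\d,j)$ into $\C(E'\d,j')$ and $\C_0(E\d)$ into $\C_0(E'\d)$, so that it descends to a contraction $E_\oo\to E'_\oo$, and then to produce an inverse.

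For the forward direction, \jt-convergence in \cref{eq:j_convergent} is the statement $\lim_m\seminorm{a\d-\j\blob m a_m}=0$, and both the seminorm and the expression $\lim_{n\gg m}=\lim_m\limsup_n$ are computed as iterated limits over the directed set. Because $N_0$ is cofinal in $N$, the $\limsup_n$ over $N$ and over $N_0$ coincide, and the outer limit over $m$ can likewise be taken along $N_0$; hence \jt-convergence over $N$ implies \jt-convergence of the restriction over $N_0$, and null nets restrict to null nets. This gives a well-defined isometric map $\Phi: E_\oo\to E'_\oo$ with $\Phi(\jlim_n a_n)=j'\text-\!\lim_{n\in N_0} a_n$; isometry is immediate since the seminorm of $a\d$ equals that of its restriction. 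So $\Phi$ is an isometric embedding onto a closed subspace of $E'_\oo$.

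For surjectivity I would exploit the basic-net description $E'_\oo=\overline{\bigcup_{n\in N_0}\j\oo n(E_n)}$ established before \cref{eq:same limits}: every basic net $\j\blob l a_l$ with $l\in N_0$ is already a restriction of the corresponding basic net over all of $N$, whose image under $\Phi$ is exactly $\j\oo l(a_l)\in E'_\oo$. Thus the range of $\Phi$ contains $\bigcup_{l\in N_0}\j\oo l(E_l)$, which is dense in $E'_\oo$; since $\Phi$ is an isometry, its range is closed, and therefore $\Phi$ is onto. This yields the desired isometric isomorphism $E_\oo\cong E'_\oo$.

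The step I expect to require the most care is verifying that \jt-convergence is genuinely preserved under restriction, i.e.\ that $\lim_m\limsup_n$ computed over the cofinal $N_0$ agrees with the one over $N$. The $\limsup_n$ is unproblematic because a cofinal subset is coinitial in the tail filter, but the outer $\lim_m$ must be handled honestly: given $l\in N_0$ one needs the tail estimates in \cref{eq:j_convergent} to persist when $m$ ranges only over $N_0$, which follows from cofinality but should be spelled out. Alternatively, and perhaps more cleanly, one can bypass this by invoking the universal property of \cref{prop:univ_prop}: the contractions $\j\oo n\restrictedto_{N_0}: E_n\to E_\oo$ for $n\in N_0$ take $j'$-convergent nets to Cauchy nets, so they factor through $E'_\oo$, and the two universal objects must then be isometrically isomorphic. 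I would present the direct seminorm computation as the main argument and remark that the universal property gives an alternative route.
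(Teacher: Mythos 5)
Your argument is, in substance, the paper's own: the paper gives exactly your two routes, namely a direct proof via the restriction map $\C(E\d,j)\ni(a_n)_{n\in N}\mapsto (a_n)_{n\in N_0}$, which is isometric for the seminorms, sends null nets to null nets, and is surjective on the quotients because limits of basic nets are dense in both limit spaces, together with a second proof via the universal property of Proposition~\ref{prop:univ_prop}. One caveat on a justification you give: the parenthetical claim that "a limsup over a directed set equals the limsup over any cofinal subset" is false in general (take $N=\NN$, $N_0$ the even integers, and $\norm{a_n}=1$ for $n$ odd, $\norm{a_n}=0$ for $n$ even), so on all of $\nets(E\d)$ the restriction is merely a contraction for the seminorms. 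This does not break your proof, because you only need isometry on $\C(E\d,j)$, where it holds for a different reason: as noted after \eqref{eq:j_convergent}, $\lim_n\norm{a_n}$ exists for every \jt-convergent net, and a convergent net of reals has the same limit along any cofinal subset; you should cite that fact rather than the general limsup statement.
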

\begin{proof}[Proof via the universal property.]
    Let us denote the connecting maps of the subsystem by $\oj nm$, $n>m\in N_0$, and the limit space by $\tilde E_\oo$.
    Every \jt-convergent net restricts to a \ojt-convergent net.
    In particular, for every $n\in N$, $a_n\in E_n$, we can view the basic net $\j\blob n a_n$ as an element of $\C(\tilde E\d,\ojt\,)$.
    Taking the \ojt-limit, we get maps $k_{\oo n}: E_n\to \tilde E_\oo$.
    We show that $(\tilde E_\oo,k_{\oo \blob})$ satisfies the universal property of the limit space of $(E,j)$.
    Let $\alpha_n:E_n\to F$, $n\in N$, be a net of contractions mapping \jt-convergent nets to Cauchy nets.
    Restricting this net to $N_0$ and invoking the universal property of the limit space of the subsystem, we get a linear contraction such that $\alpha_\oo:\tilde E_\oo\to F$ such that $\alpha_\oo(\ojlim_n a_n)=\lim_n \alpha_n(a_n)$, $a\d\in\C(\tilde E\d,\ojt\,)$.
    By construction, this implies $\alpha_\oo(\lim_n k_{\oo n} a_n) =\lim_n \alpha_n(a_n)$ for $a\d\in\C(E\d,j)$.
    Thus, the isomorphism exists by the uniqueness of the limit space; see Proposition~\ref{prop:univ_prop}.
\end{proof}

\begin{proof}[Direct proof]
    Let us denote the spaces of the subsystem by $\tilde E_n$, $n\in N_0$, the connecting maps by $\oj nm$, $n>m\in N_0$ and the limit space by $\tilde E_\oo$.
    The map $\C(E\d,j)\ni (a_n)_{n\in N}\mapsto (a_n)_{n\in N_0}\in \C(\tilde E\d,\ojt\,)$ is an isometry for the seminorms. 
    In particular, null nets are mapped to null nets.
    Thus, we get an isometry on the quotients $E_\oo =\C(E\d,j)/\C_0(E\d) \to \tilde E_\oo=\C(\tilde E\d,\ojt\,)$. 
    It remains to show that this is surjective.
    This is indeed the case because the limits of basic nets are dense in both limit spaces $E_\oo$.
\end{proof}

\begin{remark}\label{rem:summable to coherent}
   In \cite[Remark 2.1.4]{blackadar1997generalized} soft inductive systems are alluded to under the name of asymptotically coherent systems. 
   When the index is countable, it is sometimes useful to consider a stronger notion: 
    A soft inductive system $(E,j)$ which is indexed over $\mathbb{N}$ is \emph{summably soft} if  there exists some decreasing sequence $(\varepsilon_n)_n\in \ell^1(\mathbb{N})_+^1$ such that 
    \begin{align*}
        \|j_{nk}-j_{nm}\circ j_{mk}\|<\varepsilon_m,\ \forall\ n>m>k\geq 0.
    \end{align*}

Now, suppose $(E,j)$ is a summably soft inductive system of vector spaces. We define a strict system $(E,\ojt)$ from $(E,j)$ as follows: for all $n>m\geq 0$ set
\begin{align*}
    \ojt_{nm}&\coloneqq j_{n,n-1}\circ\hdots\circ j_{m+1,m},\\
    \ojt_{mn}&\coloneqq 0,\ \text{ and}\\
    \ojt_{nn}&\coloneqq \mathrm{id}_{E_n}.
\end{align*}
It follows from summability that for any $\varepsilon>0$ there exists an $M>0$ such that for all $n>m>M$, 
$$\|\ojt_{nm}-j_{nm}\|<\varepsilon.$$
In particular, $\|\ojt_{\infty n}-\j\oo n\|<\varepsilon$ for each $n>M$.  
Then 
\[\overline{\bigcup \ojt_{\infty n}(E_n)}=\overline{\bigcup \j\oo n(E_n)}\subset \Q(E\d),\]
and we denote both with $E'_\infty$.  

If $(E,j)$ is a summably soft system of finite-dimensional Banach spaces with countable index, we can use a compactness argument to pass to an asymptotically soft subsystem whose limit is isometrically isomorphic to $E'_\oo$ by Lemma~\ref{lem:subsystem limit}. 
\end{remark}

\subsection{Operator systems}\label{sec:op_sys}

We now turn to soft inductive limits of operator systems, by which we mean \emph{abstract} operator systems in the sense of \cite{paulsen_completely_2003}. Note that an operator system $\S$ is assumed to be unital, and we denote the unit by $1$ (or $1_{\S}$ if more context is needed).
The vector space of $\nu\times \nu$ matrices with entries in $\S$ will be referred to as a matrix amplification of $\S$ and is denoted $M_\nu(\S)$, and we use the shorthand $M_\nu=M_\nu(\CC)$.
Elements of $M_\nu(\S)$ will be denoted $[a_{ij}]_{ij}$ or just by $[a_{ij}]$ with $a_{ij}\in\S$. 
Of course, $M_\nu(\S)$ is the same as the (algebraic) tensor product $M_\nu \otimes \S$.
The standard matrix units of $M_\nu$ will be denoted $E_{ij}$, $i,j=1,\ldots,\nu$ so that we have $[a_{ij}] = \sum_{ij} a_{ij} \ox E_{ij}$.
If $\alpha:\S\to\T$ is a completely positive map between operator systems, we denote its matrix amplification by $\alpha\up\nu := \alpha\ox\id_\nu$ which acts by mapping $[a_{ij}]$ to the matrix $[\alpha(a_{ij})]$.
We always assume operator systems to be equipped with their order norm, i.e.,
\begin{equation}
    \norm{a}= \inf\set*{\lambda>0 \given
    \begin{bmatrix}
        \lambda1&a\\
        a^*&\lambda1
    \end{bmatrix}\ge 0}.
\end{equation}
On self-adjoint elements, this reduces to
\begin{equation}
    \norm{a} = \inf\set*{\lambda>0 \given -\lambda1 \le a \le \lambda1}.
\end{equation}
In particular, operator systems are normed spaces, and unital completely positive (ucp) maps are linear contractions, cf.\ \cite[Prop.\ 3.6]{paulsen_completely_2003}.
We do not require operator systems to be complete with respect to the order norm.

\begin{definition}
    A {\bf soft inductive system in the operator system category} $(\S,j)$ over a directed set $(N,\le)$ is a soft inductive system of normed spaces such that each $\S_n$ is an operator system and each $\j nm$ is a ucp map.  The system is called (completely) isometric when all the $j_{nm}$ are. 
\end{definition}

We let $(\S,j)$ be a soft inductive system of operator systems. Then the uniformly bounded nets $\nets(\S\d)$ form an operator system, namely the direct product $\prod_{n\in N}\S_n$:
The adjoint and the vector space structure are defined $n$-wise, the order unit is $ 1\d$, and the positive cone is $\nets(\S\d)^+ =\set{a\d\in\nets(\S\d)\given a_n\ge0 \ \forall n}$.
On the matrix amplifications, the positive cones are given through the natural isomorphism 
\begin{equation}\label{eq:nets_matrix_amp}
    M_\nu(\nets(\S\d)) \cong \nets(M_\nu(\S)\d).
\end{equation}
It immediately follows from $(\j nm a_m)^*=\j nm(a_m^*)$ that, for any \jt-convergent net $a\d$, also the adjoint $a\d^*=(a_n^*)_{n\in N}$ is \jt-convergent.
Furthermore, by unitality of the connecting maps, the net of order units $ 1\d$ is \jt-convergent.
Thus, the \jt-convergent nets $\C(\S\d,j)$ are a unital self-adjoint subspace of the operator system $\nets(\S\d)$ and, hence, an operator subsystem of $\nets(\S\d)$.

\begin{lemma}\label{lem:matrix_amp_lemma}
    \begin{enumerate}[(1)]
        \item 
            The matrix amplifications $M_\nu(\S_n)$ together with the connecting maps $\jnu nm$ also form a soft inductive system of operator systems $(M_\nu(\S),j\up\nu)$.
            A net $[a_{ij,\blob}]\in \nets(M_\nu(\S)\d)$ is \jtnu-convergent if and only if each $a_{ij,\blob}$, $i,j=1,\ldots,\nu$, is \jt-convergent.
        \item 
            Let $[a_{ij,\blob}]$ be \jtnu-convergent (and positive) and let $A$ be any complex $\mu\times\nu$-matrix.
            Then the net $A[a_{jk,\blob}]A^*= \sum_{ijkl} A_{ij}\Bar A_{lk} \, a_{jk,\blob}\ox E_{il}$ is \jtmu-convergent (and positive).
        \item
            For any (positive) $A\in M_\nu$ and (positive) $a\d\in\C(\S\d,j)$ the net $A\otimes a\d = [A_{jk}a\d]$ is \jtnu-convergent (and positive).
    \end{enumerate} 
\end{lemma}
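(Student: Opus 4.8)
The plan is to reduce all three parts to two elementary estimates for the order norm on matrix amplifications, and then feed them mechanically into the definition of \jt-convergence. For an operator system $\T$ and $[b_{ij}]\in M_\nu(\T)$, I would first record the sandwich
\[
\max_{i,j}\norm{b_{ij}}_\T \;\le\; \norm{[b_{ij}]}_{M_\nu(\T)} \;\le\; \sum_{i,j}\norm{b_{ij}}_\T .
\]
The lower bound holds because entry extraction $[b_{ij}]\mapsto b_{pq}=e_p^*[b_{ij}]e_q$ is a contraction (compression by the scalar contractions $e_p,e_q$), and the upper bound follows by writing $[b_{ij}]=\sum_{i,j}b_{ij}\ox E_{ij}$, using that each inclusion $x\mapsto x\ox E_{ij}$ is an isometry for the order norm, and applying the triangle inequality. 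These are the only operator-system-specific inputs; everything else is bookkeeping with the definition of \jt-convergence and with the operator $\lim_{n\gg m}=\lim_m\limsup_n$.

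For part (1), each $M_\nu(\S_n)$ is an operator system and each $\jnu nm=\j nm\ox\id_\nu$ is ucp, hence an order-norm contraction. Since $\jnu nl-\jnu nm\jnu ml=(\j nl-\j nm\j ml)\up\nu$ acts entrywise, the upper estimate gives
\[
\norm{(\jnu nl-\jnu nm\jnu ml)[a_{ij}]}\le\sum_{i,j}\norm{(\j nl-\j nm\j ml)a_{ij}},
\]
and applying $\lim_{n\gg m}$ to this \emph{finite} sum, the asymptotic transitivity of $(\S,j)$ upgrades to asymptotic transitivity of $(M_\nu(\S),j\up\nu)$. The iff characterization is then immediate by applying the sandwich to $b_{ij}=a_{ij,n}-\j nm a_{ij,m}$: the matrix quantity $\norm{[a_{ij,n}-\j nm a_{ij,m}]}$ and the individual entry quantities $\norm{a_{ij,n}-\j nm a_{ij,m}}$ vanish under $\lim_{n\gg m}$ simultaneously, so $[a_{ij,\blob}]$ is \jtnu-convergent exactly when each $a_{ij,\blob}$ is \jt-convergent.

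Parts (2) and (3) then follow formally. For (2), part (1) shows each $a_{jk,\blob}$ is \jt-convergent; since $\C(\S\d,j)$ is a linear subspace of $\nets(\S\d)$ (as established above), each entry $\sum_{jk}A_{ij}\Bar A_{lk}a_{jk,\blob}$ of $A[a_{jk,\blob}]A^*$ is again \jt-convergent, and part (1) applied with $\mu$ yields \jtmu-convergence. For (3), each entry $A_{jk}a\d$ is a scalar multiple of $a\d\in\C(\S\d,j)$, hence \jt-convergent, and part (1) gives \jtnu-convergence. The positivity assertions are checked pointwise in $n$: congruence $X\mapsto AXA^*$ preserves positivity in $M_\nu(\S_n)$, and $A\ox a_n\ge0$ whenever $A\ge0$ and $a_n\ge0$ (writing $A=\sum_p w_pw_p^*$ gives $A\ox a_n=\sum_p w_p a_n w_p^*\ge0$).

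I do not expect a deep obstacle here; the content is genuinely routine once the sandwich is in hand. The one place requiring care is precisely that first estimate: I must verify that the entry inclusions $x\mapsto x\ox E_{ij}$ are isometric and the entry extractions contractive for the \emph{order} norm (not merely in a chosen concrete representation), and then correctly interchange the finite sum with $\limsup_n$ and $\lim_m$ so that asymptotic transitivity transfers. Keeping the two directions of the sandwich aligned with the two directions of the iff in part (1) is the only bookkeeping that must be done attentively.
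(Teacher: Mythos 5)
Your proposal is correct and matches the paper's proof in essence: both rest on the two-sided estimate $\max_{ij}\norm{b_{ij}}\le\norm{[b_{ij}]}\le\sum_{ij}\norm{b_{ij}}$ together with the fact that $\jnu nm$ acts entrywise, using the upper bound for asymptotic transitivity and the "if" direction and the lower bound for the converse, with positivity checked $n$-wise via the matrix ordering. Your routing of (2) through (1) plus linearity of $\C(\S\d,j)$, and your direct rank-one verification of (3), are only cosmetic variations on the paper's direct entrywise estimate for (2) and its reduction of (3) to (2).
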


\begin{proof}
    (1):
    The "if" part follows from the inequality
    \[
        \norm{(\jnu nl - \jnu nm\jnu ml)[a_{ij,l}]} = \norm{ \sum_{ij} (\j nl -\j nm \j ml)(a_{ij,l})\ox E_{ij}} \le \sum_{ij} \norm{(\j nl -\j nm\j ml)a_{ij,l}} \xrightarrow{n\gg m}0.
    \] 
    For the converse, we start with basic sequences. For these the claim is immediate because $\jnu \blob m [a_{ij,m}]=[\j \blob m a_{ij,m}]$.
    Now let $[a_{ij,\blob}]$ be \jtnu-convergent. Then \jt-convergence of $a_{ij,\blob}$ follows from the inequality $\norm{[a_{ij,n}]}\ge \max_{ij} \norm{a_{ij,n}}$.

    (2): Positivity of $A[a_{jk,n}]A^*$ follows from $\S_n$ being matrix ordered. 
    Since this holds for all $n$, we know that $A[a_{jk,\blob}]A^*$ is positive.
    \jt-convergence is readily checked:
    \[
        \norm{(\jnu nl-\jnu nm\jnu ml)(A[a_{jk,l}]A^*)} \le \sum_{ijkr} \abs{A_{ij}A_{kr}} \norm{(\j nl-\j nm\j ml)a_{jk} } \xrightarrow{n\gg m}0.
    \] 

    (3): This is a special case of (2).
\end{proof}

This Lemma shows that the operator systems structure of $\C(\S\d,j)$, inherited from $\nets(\S\d)$, is compatible with the notion of \jt-convergence.
In particular, the isomorphism \eqref{eq:nets_matrix_amp} yields $M_\nu(\C(\S\d,j))\cong \C(M_\nu(\S)\d,j\up\nu)$. 
It follows from their construction that 
\[  
    \S'_\oo:=\overline{\bigcup_n j_{\oo n}(\S_n)}\subset \Q(\S\d) \quad\text{and}\quad \S_\oo = \C(\S\d,j)/\C_0(\S\d)
\] 
agree as $^*$-Banach spaces. Moreover, since $M_\nu(\C(\S\d,j))\cong \C(M_\nu(\S)\d,j\up\nu)$, we can also identify
\begin{align*}
    \textstyle M_\nu(\S_\infty)=M_\nu(\S'_\infty)=M_\nu\left(\overline{\textstyle{\bigcup_n} j_{\oo n}(\S_n)}\right)\subset M_\nu(\Q(\S\d))\end{align*}
    with
\begin{align*}
\textstyle M_\nu(\S)_\oo=M_\nu(\S)'_\oo=\overline{\textstyle{\bigcup_n} j^{(\nu)}_{\oo n}(M_\nu(\S_n))}\subset \Q(M_\nu(\S)\d).\end{align*}

We have to equip the limit space $\S_\oo = \C(\S\d,j)/\C_0(\S\d)$ with an operator system structure. One approach is to consider the $\S_n$'s as concrete operator systems: for each $\S_n$ in the system, the Choi-Effros theorem allows us to concretely represent it as an operator subsystem of some unital $\Cstar$-algebra $\A_n$. 
Then $\C(\S\d,j)$ is an operator subsystem of $\nets(\S\d)$, which is an operator subsystem of the unital $\Cstar$-algebra $\prod_n \A_n$, and $\C_0(\S\d)$ is the intersection of $\C(\S\d,j)$ (or $\nets(\S\d)$) with the ideal $\bigoplus_n \A_n$ of null convergent nets. 
Then $\S_\oo$ and $\Q(\S\d)$ inherit an operator system structure by identifying them with the images of $\C(\S\d,j)$, resp.\ $\nets(\S\d)$, under the quotient map $\prod_n \A_n \to \prod_n \A_n/\bigoplus_n \A_n$. 
The resulting operator system structure is independent of the choice of the algebras $\A_n$ since we will always have $\C_0(\S\d)=\C(\S\d)\cap\C_0(\A\d)$.

However, one may also define an operator system on $\S_\oo$ and $\Q(\S\d)$ abstractly as operator system quotients as introduced and studied in \cite{kavruk2013quotients}. We briefly summarize the important bits:

The quotient is naturally equipped with the adjoint operation $(\jlim_n a_n)^* =\jlim_n a_n^*$ and contains the special element $1_\oo =\jlim_n a_n$. 
One can also define a system of matrix-compatible positive cones $D_\nu$ via the image of the positive cones of $M_\nu(\C(\S\d,j))$ under the canonical projection $\jnulim$. It is, however, not immediate that these define an operator system structure (see the discussion following Proposition~\ref{prop:Kav+5.12}).
Instead of constructing an operator system structure on $\S_\oo$ by hand, we rely on \cite{kavruk2013quotients} where quotients of operators systems have been introduced and studied.
We briefly summarize the important bits:

For a quotient $\TT/\J$ of an operator system, $\TT$ and (non-unital) subspace $\J$ to make sense (in the category of operator systems), one needs that $\J$ is a so-called \emph{kernel}.
Among other equivalent definitions (see \cite[Prop. 3.1]{kavruk2013quotients}) this means that there is a ucp map $\phi$ with domain $\TT$ so that $\J=\ker \phi$.
Every kernel $\J$ is an \emph{order ideal} which means that $a\in\TT$ and $b\in\J$ so that $0\le a\le b$ implies $a\in\J$.
The matrix order of the quotient is defined by
\begin{equation}\label{Dv}
    M_\nu(\TT/\J)^+ = \set{ a+\J \in M_\nu(\TT) \given \forall_{\eps>0}\exists_{b\in\J} : \eps 1 + a+b \ge0 }
\end{equation}
and the quotient operator system is characterized by the following universal property:

\begin{proposition}[Universal property of the quotient operator system {\cite[Prop.~3.6]{kavruk2013quotients}}]\label{prop:upropunital}
    Every unital completely positive map $\varphi:\T_1\to\T_2$ that vanishes on $\J$, $\J\subset\ker(\varphi)$, factors through the quotient.
    That is, the map $\hat\varphi:\T_1/\J\to\T_2$ with $\hat\varphi(x+\J) = \varphi(x)$ is well-defined, unital and completely positive.
    
    Let $\mc R$ be an operator system and $\psi:\T_1\to\mc R$ a unital completely positive map.
    If it holds that for every unital completely positive map $\varphi:\T_1\to\T_2$ with $\J\subset \ker(\varphi)$ there exists a unique unital completely positive map $\hat\varphi:\mc R\to\T_2$ such that $\varphi = \hat\varphi\circ\psi$ then there exists a unique complete order isomorphism $\gamma:\mc R\to\T_1/\J$ such that $\gamma\circ\psi$ is the canonical quotient map.
\end{proposition}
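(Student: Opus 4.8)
The plan is to prove the two assertions in turn. For the factorization, write $q\colon\T_1\to\T_1/\J$ for the canonical quotient map. Given a ucp map $\varphi:\T_1\to\T_2$ with $\J\subseteq\ker(\varphi)$, I would first note that $\hat\varphi(x+\J)=\varphi(x)$ is well defined (if $x+\J=y+\J$ then $x-y\in\J\subseteq\ker(\varphi)$, so $\varphi(x)=\varphi(y)$), linear, and unital, since $\hat\varphi(1+\J)=\varphi(1)=1$. The only substantive point is complete positivity, which I would extract directly from the defining formula \eqref{Dv} of the quotient matrix cones: fix $\nu$ and a positive coset $a+M_\nu(\J)\in M_\nu(\T_1/\J)^+$; then for every $\eps>0$ there is $b\in M_\nu(\J)$ with $\eps 1 + a + b\ge 0$ in $M_\nu(\T_1)$. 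Applying the positive amplification $\varphi\up\nu$ and using that the entries of $b$ lie in $\J\subseteq\ker(\varphi)$, so that $\varphi\up\nu(b)=0$, yields $\eps 1 + \varphi\up\nu(a)\ge 0$ in $M_\nu(\T_2)$. Since this holds for all $\eps>0$ and $M_\nu(\T_2)$ is an Archimedean matrix order, letting $\eps\downarrow 0$ gives $\varphi\up\nu(a)\ge 0$, so $\hat\varphi$ is completely positive. Uniqueness of $\hat\varphi$ with $\hat\varphi\circ q=\varphi$ is immediate from surjectivity of $q$.

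For the characterization, I would first observe that the first part says precisely that $(\T_1/\J,q)$ itself satisfies the stated universal property, and then run the standard uniqueness-of-universal-objects argument, taking $\psi$ to annihilate $\J$ (it is the universal ucp arrow out of $\T_1$ killing $\J$). Applying the universal property of $(\mc R,\psi)$ to the ucp map $q$ produces a unique ucp map $\gamma:\mc R\to\T_1/\J$ with $\gamma\circ\psi=q$, and applying the universal property of the quotient to $\psi$ produces a unique ucp map $\delta:\T_1/\J\to\mc R$ with $\delta\circ q=\psi$. Then $(\gamma\delta)\circ q=\gamma\psi=q$ and $(\delta\gamma)\circ\psi=\delta q=\psi$, so the uniqueness clauses — for $q$ in the quotient's property and for $\psi$ in that of $\mc R$ — force $\gamma\delta=\id_{\T_1/\J}$ and $\delta\gamma=\id_{\mc R}$. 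Thus $\gamma$ is a ucp bijection with ucp inverse $\delta$, i.e. a complete order isomorphism, and $\gamma\circ\psi=q$ exhibits $\gamma\circ\psi$ as the canonical quotient map; its uniqueness is exactly the uniqueness in the universal property applied to $q$.

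The main obstacle will be the complete positivity in the factorization step. Everything else is either a routine well-definedness check or a formal manipulation of universal arrows, but the positivity argument is where the hypotheses genuinely enter: one must use the correct description \eqref{Dv} of the quotient's matrix cones and then remove the auxiliary $\eps 1$ via the Archimedean property at every matrix level. It is precisely this Archimedeanization that makes $\J$ being a \emph{kernel} (equivalently, the cones \eqref{Dv} being proper and Archimedean, so that $\T_1/\J$ is again an operator system) the right hypothesis. In the second half the only delicate point is that one must know $\psi$ annihilates $\J$ before invoking the quotient's universal property for $\psi$, which is why $\psi$ is taken as the universal arrow killing $\J$.
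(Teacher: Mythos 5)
Your proof is correct and follows essentially the same route as the source the paper cites for this proposition (the paper gives no proof of its own, importing the result from \cite[Prop.~3.6]{kavruk2013quotients}): well-definedness and unitality are routine, complete positivity of $\hat\varphi$ comes from the $\eps$-description \eqref{Dv} of the quotient matrix cones together with the Archimedean property of $M_\nu(\T_2)$, and the second half is the formal uniqueness-of-universal-objects argument. One point worth recording: as printed, the second half of the proposition omits the hypothesis $\J\subseteq\ker(\psi)$, which your argument rightly supplies and genuinely needs — without it the statement is false (take $\mc R=\T_1$ and $\psi=\id_{\T_1}$: every ucp $\varphi$ killing $\J$ factors uniquely as $\varphi=\varphi\circ\id_{\T_1}$, so the factorization hypothesis holds, yet for $\J\neq 0$ the quotient map is not injective and no complete order isomorphism $\gamma$ with $\gamma\circ\psi$ equal to it can exist). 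So the "delicate point" you flag is exactly right and cannot be argued away: $\J\subseteq\ker(\psi)$ does not follow from the stated hypothesis and must be assumed, as it is in the cited reference.
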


\begin{corollary}\label{cor:upropcont}
    Every contractive completely positive map $\varphi:\T_1\to\T_2$ that vanishes on $\J$, $\J\subset\ker(\varphi)$, factors through the quotient.
    That is, the map $\hat\varphi:\T_1/\J\to\T_2$ with $\hat\varphi(x+\J) = \varphi(x)$ is well-defined, contractive and completely positive.
\end{corollary}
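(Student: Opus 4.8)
The plan is to establish the three assertions—well-definedness, complete positivity, and contractivity—directly from the matrix-order description \eqref{Dv} of the operator-system quotient, rather than by reducing to the unital statement \cref{prop:upropunital}. The reason for avoiding the obvious reduction is that the natural way to ``fill up'' a contractive $\varphi$ to a unital one, namely replacing $\varphi$ by the map $x\mapsto \varphi(x)+\omega(x)\,(1-\varphi(1))$ for a state $\omega$ vanishing on $\J$, only lets one recover $\hat\varphi$ as a \emph{difference} of completely positive maps, which need not be completely positive. Well-definedness itself is immediate: since $\J\subseteq\ker\varphi$, the assignment $\hat\varphi(x+\J)=\varphi(x)$ is independent of the chosen representative and is clearly linear.

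For complete positivity I would fix $\nu$ and a positive element $a+M_\nu(\J)\in M_\nu(\T_1/\J)^+$. By \eqref{Dv}, for every $\eps>0$ there is some $b\in M_\nu(\J)$ with $\eps 1 + a + b\ge 0$ in $M_\nu(\T_1)$. Applying the positive map $\varphi\up\nu$ and using that $M_\nu(\J)\subseteq\ker\varphi\up\nu$ (so $\varphi\up\nu(b)=0$, as entries of $b$ lie in $\ker\varphi$), this yields $\eps\,\varphi\up\nu(1)+\varphi\up\nu(a)\ge 0$. Contractivity of $\varphi$ gives $0\le\varphi(1)\le 1$, whence the diagonal amplification satisfies $\varphi\up\nu(1)\le 1$ in $M_\nu(\T_2)$, so that $\eps 1 + \varphi\up\nu(a)\ge 0$. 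As this holds for every $\eps>0$, the Archimedean property of the operator system $M_\nu(\T_2)$ forces $\varphi\up\nu(a)=\hat\varphi\up\nu(a+M_\nu(\J))\ge 0$. Since $\nu$ was arbitrary, $\hat\varphi$ is completely positive.

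Finally, for contractivity I would invoke the standard fact that a completely positive (indeed, merely positive) map between operator systems attains its norm at the unit (cf.\ \cite[Prop.~3.6]{paulsen_completely_2003}). Having just shown $\hat\varphi$ completely positive, this gives $\norm{\hat\varphi}=\norm{\hat\varphi(1_{\T_1/\J})}=\norm{\varphi(1)}\le\norm{\varphi}\le 1$, so $\hat\varphi$ is contractive (in fact completely contractive, since the two norms coincide for completely positive maps).

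The only genuinely delicate point is the passage from ``unital'' to ``contractive,'' i.e.\ accommodating the fact that $\varphi(1)$ need not equal $1$. This is precisely where the contractivity hypothesis enters, through the estimate $\varphi\up\nu(1)\le 1$ that allows the $\eps$-term to be absorbed; dropping it one would still obtain complete positivity (with $\eps\norm{\varphi(1)}$ in place of $\eps$) but lose the norm bound. Everything else is a routine application of \eqref{Dv} together with the closedness of the positive cone encoded in the Archimedean property.
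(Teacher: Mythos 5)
Your proof is correct, but it takes a genuinely different route from the paper's. The paper reduces to the unital universal property (Proposition~\ref{prop:upropunital}) by \emph{corestricting} rather than unitizing: it views $\varphi$ as a unital cp map onto its image $\T=\varphi(\T_1)$, re-declared an operator system with order unit $1_\T:=\varphi(1_{\T_1})$, obtains $\hat\varphi:\T_1/\J\to\T$ from the unital statement, and then notes that contractivity of $\varphi$ makes the embedding $\T\hookrightarrow\T_2$ contractive. So your motivating worry---that the ``obvious'' reduction would require filling $\varphi$ up to a unital map into $\T_2$ and would only express $\hat\varphi$ as a difference of cp maps---concerns a different reduction than the one the paper actually uses; the corestriction trick sidesteps it entirely. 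Your direct argument, by contrast, verifies complete positivity straight from the quotient cones \eqref{Dv}: apply $\varphi\up\nu$ to $\eps 1+a+b\ge 0$, kill $b$ since $M_\nu(\J)\subset\ker\varphi\up\nu$, absorb the error via $\varphi\up\nu(1)\le 1$ (which is where $0\le\varphi(1)\le 1$, hence contractivity, enters), and conclude by Archimedeanity of $M_\nu(\T_2)$; contractivity then follows from $\norm{\hat\varphi}_{cb}=\norm{\hat\varphi(1_{\T_1/\J})}=\norm{\varphi(1)}\le 1$. What each approach buys: yours is self-contained, makes the role of the contractivity hypothesis completely explicit (and correctly notes that without it one still gets complete positivity), and avoids the small verification implicit in the paper's proof that $(\varphi(\T_1),\varphi(1))$ is again an Archimedean operator system and embeds contractively; the paper's proof is shorter and recycles the already-established universal property, which also hands over well-definedness and the factorization for free. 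One small caveat: your parenthetical ``(indeed, merely positive)'' for norm-attainment at the unit is an overstatement---for merely positive maps on operator systems one only has $\norm{\varphi}\le 2\norm{\varphi(1)}$ in general, with the factor $2$ attained---but this is harmless since your argument only uses the completely positive case, for which \cite[Prop.~3.6]{paulsen_completely_2003} applies.
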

\begin{proof}
        Let $\varphi:\T_1\to\T_2$ be a contractive completely positive map. Then we can view $\varphi$ as a unital completely positive map $\T_1\to\varphi(\T_1) =: \T$ where we choose $\varphi(1_{\T_1}) =: 1_{\T}$ as the order unit of $\T$.
        The universal property Proposition~\ref{prop:upropunital} then gives existence of a unique unital completely positive map $\hat\varphi:\T_1/\J\to\T$ necessarily given by $\hat\varphi(x+\J) = \varphi(x)$.
        Note that, since $\varphi$ is contractive, it holds that $\norm{1_{\T}}_{\T_2} \le \norm{1_{\T_1}}_{\T_1} = 1 = \norm{1_{\T}}_\T$, so the embedding $\T\hookrightarrow\T_2$ is contractive.
        Thus, viewing $\hat\varphi$ as a map $\T_1/\J\to\T_2$ gives a completely positive contraction.
\end{proof}

We now return to soft inductive systems of operator systems. Let us assume that each $\S_n$ is concretely realized as a unital self-adjoint subspace of a unital $\Cstar$-algebra $\A_n$.
Since $\C_0(\S\d)$ is the intersection of $\C(\S\d,j)$ (or $\nets(\S\d)$) with the $\Cstar$-ideal $\bigoplus_n \A_n$, it is a kernel in the sense of \cite[Definition 3.2]{kavruk2013quotients}. Hence, it has the universal operator system structure with matrix order as in \eqref{Dv}. In particular, its operator system structure is independent of the choice of the $A_n$.)
Below, we give two intrinsic proofs that $\C_0(\S\d)$ is a kernel.

\begin{lemma}
    \begin{enumerate}[(1)]
        \item\label{it:order_ideal}
            $\C_0(\S\d)$ is a self-adjoint closed order ideal in $\C(\S\d,j)$.
        \item\label{it:order_unit_norm}
            The order unit induced norm on the quotient $\C(\S\d,j)/\C_0(\S\d)$ coincides with the Banach space quotient norm, i.e.,
            \begin{equation}\label{eq:order_unit_norm}
                \inf_{b\in \C_0(\S\d)} \norm{a\d+b\d}_\nets =\inf \set{\lambda>0 \given \jlim_na_n\pm \lambda 1_\oo \in D_1} \quad \forall a\d=a\d^*\in\C(\S\d,j).
            \end{equation}
        \item\label{it:kernel}
            $\C_0(\S\d)$ is a kernel in the sense of \cite{kavruk2013quotients}. 
    \end{enumerate} 
\end{lemma}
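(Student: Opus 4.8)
\textbf{Part~\ref{it:order_ideal}.} The plan is to push everything through the fact that the order, the adjoint, and the norm on $\nets(\S\d)=\prod_n\S_n$ are all computed entrywise. Self-adjointness of $\C_0(\S\d)$ is immediate, since $\norm{a_n^*}=\norm{a_n}$ in every operator system, so $a\d\in\C_0(\S\d)$ forces $a\d^*\in\C_0(\S\d)$. Closedness follows from the subadditivity $\seminorm{a\d}\le\norm{a\d-b\d}_\nets+\seminorm{b\d}$: a $\norm{\placeholder}_\nets$-limit of null nets has vanishing seminorm and is therefore null. For the order-ideal property, suppose $a\d\in\C(\S\d,j)$, $b\d\in\C_0(\S\d)$ and $0\le a\d\le b\d$; since the order is entrywise this means $0\le a_n\le b_n$ for every $n$, and in an operator system $0\le a_n\le b_n$ implies $\norm{a_n}\le\norm{b_n}$. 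Hence $\lim_n\norm{a_n}\le\lim_n\norm{b_n}=0$, so $a\d\in\C_0(\S\d)$.

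\textbf{Part~\ref{it:order_unit_norm}.} First I would identify the left-hand side: by \eqref{eq:seminorm} the Banach-space quotient norm equals $\seminorm{a\d}=\limsup_n\norm{a_n}$, which for $j$-convergent $a\d$ is just $\lim_n\norm{a_n}$. It then remains to compare this with the order-unit norm $\inf\set{\lambda>0\given \lambda1_\oo\pm\jlim_na_n\in D_1}$. To see that the order-unit norm dominates the Banach norm, I would start from $\lambda1_\oo\pm\jlim_na_n\in D_1$ and unravel \eqref{Dv}: for each $\eps>0$ there are null nets $b\d^\pm$ with $(\lambda+\eps)1_n\pm a_n+b^\pm_n\ge0$ for all $n$, whence $\norm{a_n}\le\lambda+\eps+\max(\norm{b_n^+},\norm{b_n^-})$, and taking $\limsup_n$ and then $\eps\to0$ yields $\lim_n\norm{a_n}\le\lambda$. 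The reverse inequality is the only step needing an idea: given $\lambda>\limsup_n\norm{a_n}$, choose $m$ with $\norm{a_n}<\lambda$ for all $n\ge m$ and set $b_n^+=0$ for $n\ge m$ and $b_n^+=-\lambda1_n-a_n$ otherwise. The crucial point is that $b\d^+\in\C_0(\S\d)$ because it vanishes on the cofinal tail $\set{n\given n\ge m}$, even if it is large on the remaining indices; meanwhile $\lambda1\d+a\d+b\d^+\ge0$ entrywise, so $\lambda1_\oo+\jlim_na_n\in D_1$. The same construction applied to $-a\d$ handles the other sign, giving order-unit norm $\le\lambda$.

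\textbf{Part~\ref{it:kernel}.} As being a kernel is strictly stronger than being an order ideal, this is the substantive part. The quickest argument uses concrete realizations $\S_n\subset\A_n$ in unital $\Cstar$-algebras: then $\bigoplus_n\A_n$ is a closed two-sided ideal of $\prod_n\A_n$, the quotient $\ast$-homomorphism $\pi\colon\prod_n\A_n\to\prod_n\A_n\big/\bigoplus_n\A_n$ is ucp, and $\C_0(\S\d)=\C(\S\d,j)\cap\ker\pi=\ker\bigl(\pi\restrictedto_{\C(\S\d,j)}\bigr)$ exhibits $\C_0(\S\d)$ as the kernel of a ucp map. For an intrinsic proof avoiding any choice of $\A_n$, I would instead produce enough states vanishing on $\C_0(\S\d)$ to separate $\C(\S\d,j)/\C_0(\S\d)$: for an ultrafilter $\omega$ on $N$ and states $\phi_n$ on $\S_n$, the functional $a\d\mapsto\lim_\omega\phi_n(a_n)$ is a state annihilating $\C_0(\S\d)$. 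Given self-adjoint $a\d\notin\C_0(\S\d)$ one has $\limsup_n\norm{a_n}>0$, so using that states norm self-adjoint elements, $\norm{a_n}=\sup_\phi\abs{\phi(a_n)}$, I can choose $\phi_n$ with $\abs{\phi_n(a_n)}$ bounded below on a cofinal set and an $\omega$ containing that set, making the limit nonzero; splitting a general $a\d$ into real and imaginary parts reduces to this case. The product of all such states is then a unital positive map into the commutative $\Cstar$-algebra $\prod\CC$—hence automatically completely positive—with kernel exactly $\C_0(\S\d)$, so by the characterization of kernels in \cite[Prop.~3.1]{kavruk2013quotients} the claim follows.

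\textbf{Expected obstacle.} I expect Part~\ref{it:kernel} to be the crux. Parts~\ref{it:order_ideal} and~\ref{it:order_unit_norm} reduce to entrywise order-theoretic bookkeeping, with the only twist being the observation in~\ref{it:order_unit_norm} that a correction net supported off a cofinal tail is automatically null. By contrast, \ref{it:kernel} demands a genuine ucp map whose kernel is \emph{exactly} $\C_0(\S\d)$, not merely an order ideal containing it. The delicate point is to keep the intrinsic argument free of the operator-system structure on the quotient $\S_\oo$—which presupposes the kernel property and would render the reasoning circular—so the separating family of states must be constructed directly on $\C(\S\d,j)$ and shown to detect every non-null $j$-convergent net.
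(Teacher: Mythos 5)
Your proposal is correct, but it is organized differently from the paper's proof, so a comparison is in order. Part~(1) matches the paper (which records only the order-ideal computation, treating self-adjointness and closedness as immediate). In part~(2) both arguments identify the left-hand side with $\seminorm{a\d}=\lim_n\norm{a_n}$, but note that the paper's $D_1$ is the \emph{direct image} of the positive cone of $\C(\S\d,j)$ under $\jlim$ (that is how $D_\nu$ is introduced just before the lemma), not the $\eps$-smeared cone \eqref{Dv} that you unravel; the discrepancy is harmless, since membership in the direct-image cone yields the \eqref{Dv}-type inequality with $\eps=0$ (take $b\d$ to be the null difference of two representatives), so your estimate covers the paper's reading a fortiori. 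For the converse inequality the paper normalizes the representative so that $\norm{a_n}\le\seminorm{a\d}$ for all $n$ (citing \cite{jdynamics}) to get $\seminorm{a\d}1\d\pm a\d\ge0$ entrywise, whereas you zero out the obstruction off a tail $\set{n\given n\ge m}$; both produce a positive $j$-convergent representative, and your variant in fact exhibits order proximinality at level one. The genuine divergence is part~(3): the paper deduces it from (1) and (2) via \cite[Lem.~3.3]{kavruk2013quotients} --- that is the entire point of proving (2) --- whereas you prove (3) directly, once extrinsically ($\C_0(\S\d)$ as the kernel of the restriction to $\C(\S\d,j)$ of the ucp quotient map $\prod_n\A_n\to\prod_n\A_n/\bigoplus_n\A_n$) and once intrinsically via separating states and \cite[Prop.~3.1]{kavruk2013quotients}. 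Both of your alternatives in fact appear in the paper as well: the extrinsic one in the paragraph preceding the lemma, and the intrinsic one as the ``direct proof'' appended after it. Your organization buys logical independence of (3) from (2); the paper's makes (2) do double duty as both a norm statement and the engine for (3).

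One precision point in your intrinsic argument for (3): it is not true that for an \emph{arbitrary} ultrafilter $\omega$ on $N$ the functional $a\d\mapsto\lim_{n\to\omega}\phi_n(a_n)$ annihilates $\C_0(\S\d)$ --- a principal ultrafilter gives a point evaluation, and containing a single cofinal set is not enough to respect net convergence over a general directed set. You must restrict to ultrafilters containing every tail $\set{n\in N\given n\ge m}$ (the paper's order-compatible ultrafilters), and in the separation step verify that these tails together with your cofinal set $C=\set{n\given \norm{a_n}\ge\delta/2}$ have the finite intersection property --- which holds precisely because $C$ is cofinal --- so that a suitable $\omega$ exists. With that routine repair your argument is complete.
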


\begin{proof}
    \ref{it:order_ideal}: Let $0\le a\d\le b\d$ with $b\d\in\C_0(\S\d)$. Then $\norm{a_n}\le \norm{b_n}$ for all $n$ which implies that $\seminorm{a\d}=\lim_n \norm{a_n} \le\lim_n \norm{b_n}=0$ and hence $a\d\in\C_0(\S\d)$.

    \ref{it:order_unit_norm}:
    Now let $a\d=a\d^*\in\C(\S\d,j)$. By \cite[Eq.~(15)]{jdynamics}, the left-hand side of \eqref{eq:order_unit_norm} is equal to $\seminorm{a\d}$.
    We can WLOG assume that $\norm{a_n}\le\seminorm{a\d}$ \cite[Prop.~5.(1)]{jdynamics} which implies that $-\seminorm{a\d}1\d\le a\d \le \seminorm{a\d}1\d$ because each $\S_n$ is an order unit space.
    Therefore $\seminorm{a\d}$ is larger than the infimum on the right hand-side of \eqref{eq:order_unit_norm}.
    Now let $\lambda>0$ be such that $-\lambda 1_\oo \le \jlim_n a_n\le \lambda 1_\oo$. 
    This means that there is a $b\d=b\d^*\in\C(\S\d,j)$ with the same \jt-limit as $a\d$ and $\check e\d=\check e\d^*,\hat e\d=\hat e\d^*\in\C(S,j)$ with $\jlim_n \check e_n=\jlim_n\hat e_n = 1_\oo$, such that $-\lambda \check e\d \le b\d\le \lambda\hat e\d$.
    Taking seminorms of this implies that $\seminorm{b\d}\le \lambda\seminorm{\hat e\d}$ but since the seminorm of a \jt-convergent is just the norm of its limit, this inequality simply gives $\norm{\jlim_n a_n} \le \lambda$.
    This finishes the proof of \eqref{eq:order_unit_norm}.

    \ref{it:kernel}: According to \cite[Lem. 3.3]{kavruk2013quotients}, \ref{it:order_ideal} and \ref{it:order_unit_norm} show that a necessary and sufficient condition for being a kernel is met.
\end{proof}

We give a third proof that $\C_0(\S\d)$ is a kernel:

\begin{proof}[Direct proof that $\C_0(\S\d)\subset\C(\S\d,j)$ is a kernel]
    By \cite[Prop. 3.1]{kavruk2013quotients} it suffices to find a family of states $\Omega$ on $\C(\S\d,j)$ such that $\C_0(\S\d)=\bigcap_{\psi\in\Omega}\ker\psi$.
    Let $\omega$ be an ultrafilter on $N$ that is compatible with the order structure in the sense that it contains all cofinite sets in $N$.%
    \footnote{A subset $M$ of a directed set $(N,\le)$ is cofinal if every element of $N$ is dominated by some element of $M$. To see that such a filter exists, consider the filter of all cofinite sets and pick any ultrafilter $\omega$ containing it. These filters are free (or non-principal) if and only if $N$ does not contain a maximal element.}
    Let $\psi\d$ be \emph{any} net of states (i.e., $\psi_n$ is a state on $\S_n$ for all $n$). 
    Then one obtains a state $\psi_{\omega}$ on $\C(\S\d,j)$ which maps a \jt-convergent net $a\d$ to $\lim_{n\to\omega} \psi_n(a_n)$. 
    We set $\Omega = \{\psi_\omega : \psi_n\in S_n,\ n\in N, \ \omega\}$, where "$\omega$" ranges over all order-compatible ultrafilters on $N$.
    It is clear that any null net $a\d$ is in the kernel of any such state, i.e., $\C_0(\S\d)\subset \bigcap_{\psi\in\Omega}\ker \psi$. 
    Conversely, assume that a, without loss of generality, self-adjoint net $a\d=a\d^*$ is in the kernel of all $\psi\in\Omega$.
    Pick a net of states $\psi\d$ such that $\norm{a_n}=\abs{\psi_n(a_n)}$.
    Then $\lim_{n\to\omega} \norm{a_n} = \lim_{n\to\omega} |\psi_n(a_n)| = |\psi_\omega(a\d)|=0$ for all order-compatible ultrafilters $\omega$ on $N$ and, hence, $\lim_n\norm{a_n}=0$, i.e., $a\d\in\C_0(\S\d)$.
    Therefore, $\C_0(\S\d)=\bigcap_{\psi\in\Omega}\ker\psi$.
\end{proof}

Knowing that $\C_0(\S\d)$ is a kernel in $\C(\S\d,j)$, we can define the limit space as the operator system quotient $\S_\oo\coloneqq \C(\S\d,j)/\C_0(\S\d)$ as in \cite[Prop. 3.4]{kavruk2013quotients}.
This means that the order unit is $ 1_\oo$ and the positive cones are given by
\begin{multline}\label{eq:cones_for_limit}
    M_\nu(\S_\oo)^+ \\= \jnulim \paren*{\set[\big]{ a\d\in\C(M_\nu(\S),j\up\nu) \given \forall_{\eps>0} \exists_{b\d\in\C_0(M_\nu(\S),j\up\nu)} : \eps 1 + a_{\blob}+ b\d \ge0}}.
\end{multline}
The canonical projection $\jlim :\C(\S\d,j)\to \S_\infty$ is a ucp map, and the quotient norm on the limit space (which is the same as the seminorm of any representative) coincides with the norm induced by the order unit.
The same is true for all matrix amplifications.

It is then natural to ask whether the universal operator system structure on $\S_\oo$ with positive cones as in \cref{eq:cones_for_limit} agrees with the operator system structure $\S_\oo$ inherits as a subspace of the $\Cstar$-algebra  $\nets(\S\d)/\C_0(\S\d)\equiv \prod_n \A_n/\bigoplus_n \A_n$ as above.
By  \cite[Proposition 3.6]{kavruk2013quotients}, the induced injective linear map from the abstract operator system $\S_\infty=\C(\S\d,j)/\C_0(\S\d)$ (resp.\ $\Q(\S\d)$) into the $\Cstar$-quotient $\nets(\S\d)/\C_0(\S\d)$ is ucp. However, in general, such a map need not be a complete order embedding (see the remarks following \cite[Proposition 3.6]{kavruk2013quotients}). In our case, it will follow from \cite[Proposition 5.12]{kavruk2013quotients} that the map is a complete order embedding. 

\begin{proposition}{\cite[Proposition 5.12]{kavruk2013quotients}}\label{prop:Kav+5.12}
    Let $\mathcal{I}$ be an ideal of a $\Cstar$-algebra $\A$, $\{e_\alpha\}_\alpha$ be a quasi-central approximate unit for $\mathcal{I}$ and $\S\subset \A$ be an operator system. If $e_\alpha s\in \J=\S\cap \mathcal{I}$ for every $s\in \S$ and for every $\alpha$, then the induced map $\S/\J\to \A/\mathcal{I}$ is a (unital) complete order isomorphism. The operator space and operator system quotients $\S/\J$ are completely isometric and $\J$ is a completely biproximinal kernel in $\S$. 
\end{proposition}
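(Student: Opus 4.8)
The plan is to prove the statement in three stages: first that the induced map $\iota\colon\S/\J\to\A/\mathcal I$, $s+\J\mapsto s+\mathcal I$, is a unital complete order isomorphism onto its image (i.e.\ a complete order embedding); second that it is completely isometric; and finally that $\J$ is completely biproximinal. To begin, $\iota$ is well-defined and injective precisely because $\J=\S\cap\mathcal I$, so that $s+\mathcal I=0$ forces $s\in\S\cap\mathcal I=\J$; it is unital since $\S$ and $\A$ share the unit; and it is unital completely positive by the universal property of the operator system quotient (Proposition~\ref{prop:upropunital}), because $\iota$ is the factorization through $\S/\J$ of the ucp map $\S\hookrightarrow\A\to\A/\mathcal I$, whose kernel on $\S$ is exactly $\J$. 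Throughout I write $e_\alpha^{(\nu)}=1_\nu\otimes e_\alpha\in M_\nu(\mathcal I)$, an approximate unit for the ideal $M_\nu(\mathcal I)\subset M_\nu(\A)$, and I record the key algebraic consequence of the hypothesis: for every $[s_{ij}]\in M_\nu(\S)$ the three nets $e_\alpha^{(\nu)}[s_{ij}]$, $[s_{ij}]e_\alpha^{(\nu)}$ and $e_\alpha^{(\nu)}[s_{ij}]e_\alpha^{(\nu)}$ all lie in $M_\nu(\J)$. Indeed $e_\alpha s_{ij}\in\J$ by assumption, $s_{ij}e_\alpha=(e_\alpha s_{ij}^*)^*\in\J$ by self-adjointness of $\J$, and then $e_\alpha(s_{ij}e_\alpha)\in\J$ by a second application of the hypothesis to $s_{ij}e_\alpha\in\S$.

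The heart of the argument, and what I expect to be the main obstacle, is reflecting positivity. Suppose $[s_{ij}]\in M_\nu(\S)$ (self-adjoint, as it represents a positive coset) satisfies $[s_{ij}]+M_\nu(\mathcal I)\in M_\nu(\A/\mathcal I)^+$. Lifting this positive element of the $\Cstar$-quotient yields a self-adjoint $[c_{ij}]\in M_\nu(\mathcal I)$ with $[s_{ij}]+[c_{ij}]\ge0$ in $M_\nu(\A)$. Conjugating by the self-adjoint $1-e_\alpha^{(\nu)}$ preserves positivity, so
\[
    (1-e_\alpha^{(\nu)})[s_{ij}](1-e_\alpha^{(\nu)})+(1-e_\alpha^{(\nu)})[c_{ij}](1-e_\alpha^{(\nu)})\ge0 .
\]
Since $[c_{ij}]\in M_\nu(\mathcal I)$ and $e_\alpha^{(\nu)}$ is an approximate unit for $M_\nu(\mathcal I)$, the second summand tends to $0$ in norm, so for any $\eps>0$ there is an $\alpha$ with $(1-e_\alpha^{(\nu)})[s_{ij}](1-e_\alpha^{(\nu)})\ge-\eps\,1$. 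Expanding the conjugation gives
\[
    (1-e_\alpha^{(\nu)})[s_{ij}](1-e_\alpha^{(\nu)})=[s_{ij}]-[d_{ij}],\qquad [d_{ij}]:=e_\alpha^{(\nu)}[s_{ij}]+[s_{ij}]e_\alpha^{(\nu)}-e_\alpha^{(\nu)}[s_{ij}]e_\alpha^{(\nu)},
\]
and the decisive point is that all three terms of $[d_{ij}]$ lie in $M_\nu(\J)$ by the observation above. Hence $\eps\,1+[s_{ij}]+(-[d_{ij}])\ge0$ with $-[d_{ij}]\in M_\nu(\J)$, which is exactly the defining condition \eqref{Dv} for $[s_{ij}]+M_\nu(\J)\in M_\nu(\S/\J)^+$. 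Combined with the complete positivity of $\iota$, this shows $\iota$ is a complete order embedding, hence a unital complete order isomorphism onto its image. Note that quasi-centrality is not needed for this step; the hypothesis $e_\alpha s\in\J$ alone keeps the correction term inside $M_\nu(\J)$.

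For the completely isometric claim I would compare both quotient norms to the $\Cstar$-quotient norm. Once $\iota$ is a complete order embedding, the operator system quotient norm on $M_\nu(\S/\J)$ is the restriction of the order norm of the $\Cstar$-algebra $M_\nu(\A/\mathcal I)$, i.e.\ its $\Cstar$-norm $\norm{[s_{ij}]+M_\nu(\mathcal I)}$. On the other hand, the operator space quotient norm is $\inf_{b\in M_\nu(\J)}\norm{[s_{ij}]+b}$; choosing $b=-e_\alpha^{(\nu)}[s_{ij}]\in M_\nu(\J)$ and using the standard approximate-unit identity $\norm{[s_{ij}]+M_\nu(\mathcal I)}=\lim_\alpha\norm{(1-e_\alpha^{(\nu)})[s_{ij}]}$ gives $\inf_{b\in M_\nu(\J)}\norm{[s_{ij}]+b}\le\norm{[s_{ij}]+M_\nu(\mathcal I)}$, while the reverse inequality is immediate from $M_\nu(\J)\subset M_\nu(\mathcal I)$. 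Thus both norms equal $\norm{[s_{ij}]+M_\nu(\mathcal I)}$ at every matrix level, so the operator space and operator system quotients coincide completely isometrically.

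Finally, for complete biproximinality I must upgrade the limit in the norm identity to an \emph{attained} minimum in $M_\nu(\J)$ at every matrix level (together with the corresponding positive lifting). Here I would invoke proximinality of closed two-sided ideals in $\Cstar$-algebras (they are M-ideals, hence proximinal): the net $-e_\alpha^{(\nu)}[s_{ij}]\in M_\nu(\J)$ realizes the distance in the limit, and I expect the quasi-central property of $\{e_\alpha\}$ to be exactly what allows passage from this approximating net to a genuine norm-minimizer lying in $M_\nu(\J)$, coherently across all $\nu$. This attainment, rather than mere approximation, is the remaining technical step, and it is the one place where I anticipate the quasi-central hypothesis is essential.
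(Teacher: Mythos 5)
First, a point of comparison: the paper itself contains no proof of this statement --- it is quoted verbatim from \cite[Proposition 5.12]{kavruk2013quotients} and used as a black box --- so your attempt can only be measured against the cited source and against the statement itself. Your first three stages are correct and essentially complete. The well-definedness and ucp property of $\iota$ via Proposition~\ref{prop:upropunital} are fine; the order-reflection argument (lift positivity in the $\Cstar$-quotient, conjugate by $1-e_\alpha^{(\nu)}$, and note that the correction term $e_\alpha^{(\nu)}[s_{ij}]+[s_{ij}]e_\alpha^{(\nu)}-e_\alpha^{(\nu)}[s_{ij}]e_\alpha^{(\nu)}$ lies in $M_\nu(\J)$ because $\J$ is self-adjoint and $s_{ij}e_\alpha\in\J\subset\S$ permits a second application of the hypothesis) does exactly verify the defining condition \eqref{Dv}; and the norm comparison correctly identifies both the operator space and operator system quotient norms with the $\Cstar$-quotient norm via $\norm{a+\mathcal I}=\lim_\alpha\norm{(1-e_\alpha)a}$. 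Your observation that quasi-centrality is not used in these parts is accurate (note that taking $s=1_\S$ in the hypothesis even forces $e_\alpha\in\J$). The only cosmetic slip is the parenthetical "self-adjoint, as it represents a positive coset": a representative of a positive coset need not be self-adjoint, but since $[s_{ij}]-[s_{ij}]^*\in M_\nu(\S)\cap M_\nu(\mathcal I)=M_\nu(\J)$, you may pass to the real part without changing the coset.

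The genuine gap is the last claim: \emph{complete biproximinality is asserted, not proven}, and your sketched route would not close it. Two things are missing. (i) For proximinality of $\J$ in $\S$, invoking the M-ideal property of $\mathcal I$ in $\A$ produces a distance-attaining element of $M_\nu(\mathcal I)$, but there is no reason this minimizer lies in $M_\nu(\S)$, hence none that it lies in $M_\nu(\J)$; and the approximating net $-e_\alpha^{(\nu)}[s_{ij}]\in M_\nu(\J)$ need not converge (there is no compactness available), so the "passage from approximating net to genuine minimizer" that you defer to quasi-centrality is precisely the argument that has to be supplied. (ii) For complete \emph{order} proximinality --- which by \eqref{eq:proximinal} strictly strengthens the $\eps$-relaxed condition \eqref{Dv} you verified --- your conjugation trick structurally cannot suffice: it always leaves a residual $\eps 1$, and the constant cannot be absorbed into $M_\nu(\J)$ since $1\notin\J$. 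Obtaining an exact positive representative requires an additional mechanism (in the cited source this is where quasi-centrality does real work, through corrections of the type $(1-e_\alpha)^{1/2}s(1-e_\alpha)^{1/2}$ together with a limiting argument with norm control, using that $\J=\S\cap\mathcal I$ is closed in $\S$). This is not a pedantic point for the present paper: the positive-lifting statement of Corollary~\ref{cor:proximinal}, which feeds into the later nuclearity results, rests exactly on the biproximinality clause, i.e.\ on the part your proposal leaves as an expectation.
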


For more on operator space quotients and biproximinal kernels, see \cite[Section 4]{kavruk2013quotients}. In particular, a completely biproximinal kernel is completely order proximinal (\cite[Definition 3.5]{kavruk2013quotients}): A kernel $\J$ of an operator system $\TT$ is said to be completely order proximinal if the matrix order constructed in \eqref{eq:cones_for_limit} has the property
\begin{equation}\label{eq:proximinal}
    a+\J \in M_\nu(\TT/\J)^+ \iff \exists_{b \in \J} : a+b \ge0.
\end{equation}
  
\begin{proposition}\label{prop: limit as quotient}
    Let $(\S,j)$ be a soft inductive system of operator systems. Then the operator space and operator system quotients $\C(\S\d,j)/\C_0(\S\d)$ (resp.\ $\nets(\S\d)/\C_0(\S\d)$) are completely isometric and $\C_0(\S\d)$ is a completely biproximinal kernel in both $\C(\S\d,j)$ and $\nets(\S\d)$. Moreover, if $\{\A_n\}$ is a family of $\Cstar$-algebras such that $\S_n$ is concretely represented as an operator subsystem of $\A_n$ for each $n$, then the induced embedding $\S_\oo \to \nets(\S\d)/\C_0(\S\d)$ is a complete order isomorphism onto $\C(\S\d,j)/\C_0(\S\d)$, identified as a subspace of  $\nets(\S\d)/\C_0(\S\d)$. 
\end{proposition}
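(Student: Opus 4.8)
The plan is to derive the whole statement from Kavruk's \cref{prop:Kav+5.12} by exhibiting the right ambient $\Cstar$-data and, crucially, a well-chosen quasi-central approximate unit. Fix concrete unital embeddings $\S_n \subset \A_n$ (which exist by the Choi--Effros theorem, and are prescribed in the Moreover clause) and set $\A \coloneqq \prod_n \A_n$, a unital $\Cstar$-algebra, with $\mathcal I \coloneqq \C_0(\A\d)$ the ideal of null nets and $\S$ equal to either $\C(\S\d,j)$ or $\nets(\S\d)$; in both cases $\S$ is a unital operator subsystem of $\A$ and $\J = \S \cap \mathcal I = \C_0(\S\d)$. The first, routine, step is to check that $\mathcal I$ is a closed two-sided ideal: linearity, self-adjointness and closedness of the null nets have already been recorded, and the ideal property is the estimate $\norm{b_n a_n} \le \norm{b\d}_\nets\,\norm{a_n} \to 0$ for $b\d \in \A$, $a\d \in \mathcal I$.

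The heart of the argument is the construction of the approximate unit, and here the directed-set (rather than sequential) indexing forces a non-obvious choice. For each $m \in N$ I define $e^{(m)} = (e^{(m)}_n)_{n\in N}$ by $e^{(m)}_n \coloneqq 1_{\A_n}$ when $n \ngeq m$ and $e^{(m)}_n \coloneqq 0$ when $n \ge m$; that is, $e^{(m)}$ is the "tail-cutting" projection killing all indices above $m$. The naive finite-support approximate unit fails over a general directed set, since $\{n : \norm{a_n} \ge \eps\}$ need not be finite, which is why this tail construction is the key device. I then verify four points: (i) $0 \le e^{(m)} \le 1\d$ and the net is increasing, because $m \le m'$ gives $\{n \ge m'\} \subseteq \{n \ge m\}$; (ii) $\{e^{(m)}\}_{m\in N} \subset \mathcal I$ is an approximate unit for $\mathcal I$, since $\norm{e^{(m)}a\d - a\d}_\nets = \sup_{n \ge m}\norm{a_n}$, whose limit over $m$ is $\limsup_n \norm{a_n} = 0$ for a null net; (iii) it is quasi-central --- in fact central --- because each $e^{(m)}_n \in \{0, 1_{\A_n}\}$ commutes with every $a_n$, so $e^{(m)} a\d = a\d e^{(m)}$; and (iv) the decisive hypothesis $e^{(m)} s\d \in \J$ for all $s\d \in \S$ and all $m$: the net $e^{(m)} s\d$ agrees with $s\d$ on $\{n \ngeq m\}$ and vanishes on $\{n \ge m\}$, hence is again a net in the $\S_n$ and is a null net, so it lies in $\C_0(\S\d) = \J$ (using that null nets are automatically \jt-convergent when $\S = \C(\S\d,j)$).

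With the hypotheses of \cref{prop:Kav+5.12} in place, I apply it twice. Taking $\S = \C(\S\d,j)$ yields that the operator space and operator system quotient structures on $\C(\S\d,j)/\C_0(\S\d) = \S_\oo$ coincide, that $\C_0(\S\d)$ is a completely biproximinal kernel in $\C(\S\d,j)$, and that the induced unital map $\S_\oo \to \A/\mathcal I$ is a complete order isomorphism onto the image of $\C(\S\d,j)$ in $\A/\mathcal I$. Taking $\S = \nets(\S\d)$ gives the analogous statements for $\Q(\S\d) = \nets(\S\d)/\C_0(\S\d)$ and realizes this abstract quotient completely order isomorphically as the image of $\nets(\S\d)$ in the $\Cstar$-quotient $\A/\mathcal I \equiv \prod_n \A_n/\bigoplus_n \A_n$. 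Since $\C(\S\d,j) \subseteq \nets(\S\d)$, the image of the former lies inside the image of the latter, and composing the two applications exhibits the induced map $\S_\oo \to \nets(\S\d)/\C_0(\S\d)$ as a complete order isomorphism onto $\C(\S\d,j)/\C_0(\S\d)$ viewed as a subspace --- precisely the Moreover clause. The intrinsic conclusions (coincidence of quotients and biproximinality) do not depend on the chosen embeddings, so they hold unconditionally.

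I expect the only genuine obstacle to be isolating this tail-cutting approximate unit and checking that it is simultaneously quasi-central and $\J$-absorbing over an arbitrary directed index set; once it is in hand, everything else is bookkeeping on top of \cref{prop:Kav+5.12}. A secondary point to treat with some care is that the limit-space operator system structure defined by \eqref{eq:cones_for_limit} is the same as the one pulled back from $\A/\mathcal I$, but this is exactly the content of the "complete order isomorphism onto the image" clause and needs no separate argument.
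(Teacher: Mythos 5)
Your proposal is correct and follows essentially the same route as the paper: the paper's proof also fixes the concrete embeddings $\S_n\subset\A_n$ and applies Kavruk's Proposition~\ref{prop:Kav+5.12} with a central $0/1$-valued tail-cutting approximate unit of $\bigoplus_n\A_n$, namely $e^{(m)}_k=1_{\A_k}$ for $k\le m$ and $0$ otherwise, verifying the hypothesis $e^{(m)}s\d\in\C_0(\S\d)$ for both $\C(\S\d,j)$ and $\nets(\S\d)$. Your variant $e^{(m)}_n=1_{\A_n}$ for $n\ngeq m$ is an inessential (and if anything slightly cleaner, since it vanishes exactly on a tail of the directed set) modification of the same device, and the rest of your argument matches the paper's.
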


\begin{proof}
   Let $\{\A_n\}$ be a family of $\Cstar$-algebras such that $\S_n$ is concretely represented as an operator subsystem of $\A_n$ for each $n$. Using the quasi-central approximate identity $\{e^{(n)}_\blob\}_n$ of $\bigoplus_n \A_n$ where $e^{(n)}_k= 1_{\A_k}$ if $k\leq n$ and $0$ otherwise, $\C_0(\S\d)$ satisfies these conditions in Proposition~\ref{prop:Kav+5.12} as a kernel in both $\C(\S\d,j)$ and in $\nets(\S\d)$. The conclusion follows. 
\end{proof}

\begin{corollary}\label{cor:proximinal}
    Let $(\S,j)$ be a soft inductive system of operator systems and $\{\A_n\}$ a family of $\Cstar$-algebras such that $\S_n$ is concretely represented as an operator subsystem of $\A_n$ for each $n$. Then the universal operator system structure on $\S_\oo$ agrees with the operator system structure on $\S_\oo$ it inherits as a subspace of $\nets(\S\d)/\C_0(\S\d)\equiv \prod_n \A_n/\bigoplus_n \A_n$. Moreover, for all $\nu\geq 1$ we have 
    \begin{align*}
         M_\nu(\S_\oo)^+ 
         :\!\!&= \jnulim \paren*{\set[\big]{ a\d\in\C(M_\nu(\S),j\up\nu) \given \forall_{\eps>0} \exists_{b\d\in\C_0(M_\nu(\S),j\up\nu)} : \eps 1 + a_{\blob}+ b\d \ge0}}\\
         &= \jnulim \paren*{\set[\big]{ a\d\in\C(M_\nu(\S),j\up\nu) \given  \exists_{b\d\in\C_0(M_\nu(\S),j\up\nu)} : a_{\blob}+ b\d \ge0}}.
    \end{align*}
    In particular,  for each $a\in M_\nu(\S_\oo)^+$, there exists an $a\d\in \C(M_\nu(\S),j\up\nu)^+$ such that $\jnulim(a\d)=a$, i.e., positive elements lift to positive elements. 
\end{corollary}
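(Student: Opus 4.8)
The plan is to obtain the corollary as an unpacking of Proposition~\ref{prop: limit as quotient}, which already carries the substantive content; what remains is bookkeeping at the level of matrix cones.

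First, the agreement of the two operator system structures on $\S_\oo$ is immediate. By definition $\S_\oo$ carries the universal (Kavruk) operator system structure of the quotient $\C(\S\d,j)/\C_0(\S\d)$, and Proposition~\ref{prop: limit as quotient} asserts that the induced embedding $\S_\oo \to \nets(\S\d)/\C_0(\S\d)$ is a complete order isomorphism onto $\C(\S\d,j)/\C_0(\S\d)$ sitting inside the $\Cstar$-quotient $\prod_n \A_n/\bigoplus_n \A_n$. This complete order isomorphism is precisely the statement that the universal structure and the inherited structure coincide, so the first sentence of the corollary needs no further argument.

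Next I turn to the identity for the cones. The first equality is just the definition \eqref{eq:cones_for_limit} of the universal cones, so only the second equality — which removes the $\eps$-perturbation — requires work. Here I would invoke complete order proximinality: Proposition~\ref{prop: limit as quotient} shows that $\C_0(\S\d)$ is a completely biproximinal kernel in $\C(\S\d,j)$, and, as recorded after Proposition~\ref{prop:Kav+5.12}, completely biproximinal kernels are completely order proximinal, i.e.\ satisfy \eqref{eq:proximinal} at every matrix level $\nu$. To apply this level by level I use the identifications coming from Lemma~\ref{lem:matrix_amp_lemma} and \eqref{eq:nets_matrix_amp}: $M_\nu(\C(\S\d,j)) \cong \C(M_\nu(\S)\d,j\up\nu)$, under which the level-$\nu$ kernel $M_\nu(\C_0(\S\d))$ corresponds to $\C_0(M_\nu(\S)\d)$. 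Substituting these identifications into \eqref{eq:proximinal} converts the condition ``$\forall_{\eps>0}\exists_{b\d}$'' into ``$\exists_{b\d}$'', which is exactly the second description of $M_\nu(\S_\oo)^+$.

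Finally, the lifting statement drops out of the second description. Given $a\in M_\nu(\S_\oo)^+$, it produces an $a\d\in\C(M_\nu(\S),j\up\nu)$ with $\jnulim(a\d)=a$ together with a null net $b\d\in\C_0(M_\nu(\S),j\up\nu)$ such that $a_\blob+b\d\ge 0$ in $\nets(M_\nu(\S)\d)$. Since null nets are \jt-convergent and $\jnulim(b\d)=0$, the net $a_\blob+b\d$ lies in $\C(M_\nu(\S),j\up\nu)$, is positive, and still has \jt-limit $a$; hence it is the required element of $\C(M_\nu(\S),j\up\nu)^+$. I do not expect a genuine obstacle beyond Proposition~\ref{prop: limit as quotient}; the only point that demands care is the matrix-level translation — confirming that the kernel of the $\nu$-fold amplified quotient is precisely $\C_0(M_\nu(\S)\d)$ under $M_\nu(\C(\S\d,j)) \cong \C(M_\nu(\S)\d,j\up\nu)$ — so that the proximinality furnished by Proposition~\ref{prop:Kav+5.12} may be applied at each level $\nu$ rather than only at $\nu=1$.
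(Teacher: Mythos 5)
Your proposal is correct and is essentially the paper's own argument: the paper offers no separate proof of this corollary, deriving it, exactly as you do, from Proposition~\ref{prop: limit as quotient} together with the recorded fact that a completely biproximinal kernel is completely order proximinal, i.e.\ satisfies \eqref{eq:proximinal} at every matrix level. Your explicit check that $M_\nu(\C(\S\d,j))\cong\C(M_\nu(\S)\d,j\up\nu)$ carries $M_\nu(\C_0(\S\d))$ onto $\C_0(M_\nu(\S)\d)$ merely spells out a matrix-level identification the paper leaves implicit (and which holds by the norm comparison $\max_{ij}\norm{a_{ij,n}}\le\norm{[a_{ij,n}]}\le\sum_{ij}\norm{a_{ij,n}}$), so no gap remains.
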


The following result is the operator system version of \cite[Prop. 22]{jdynamics}:
\begin{proposition}\label{prop:jjconvlim}
    Let $(\S, j)$ and $(\T,k)$ be soft inductive systems of operator systems over the same directed set.
    Let $(\varphi_n:\S_n\to\T_n)_{n\in N}$ be a net of completely positive contractions taking \jt-convergent to $k$-convergent nets.
    Then there is a completely positive contraction $\varphi_\infty:\S_\infty\to\T_\infty$ such that
    \begin{equation}
        \varphi_\infty\Big(\jlim_n a_n\Big) = \klim_n \varphi_n(a_n).
    \end{equation}
\end{proposition}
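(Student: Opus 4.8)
The plan is to construct $\varphi_\infty$ by passing the net of maps $(\varphi_n)$ to a single completely positive contraction between the net-product operator systems and then factoring through the appropriate quotients. First I would assemble the $\varphi_n$ into a map $\varphi_\bullet \colon \nets(\S\d) \to \nets(\T\d)$ defined componentwise by $\varphi_\bullet(a\d) = (\varphi_n(a_n))_n$. Since each $\varphi_n$ is a completely positive contraction, this componentwise map is again a completely positive contraction on the direct-product operator systems (the order and matrix-order structures on $\nets(\cdot)$ are defined entrywise, so complete positivity and contractivity are inherited levelwise). By hypothesis $\varphi_\bullet$ carries $\C(\S\d,j)$ into $\C(\T\d,k)$, and because each $\varphi_n$ is contractive it carries null nets to null nets, i.e.\ $\varphi_\bullet(\C_0(\S\d)) \subset \C_0(\T\d)$.

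Next I would restrict $\varphi_\bullet$ to the operator subsystem $\C(\S\d,j)$, obtaining a completely positive contraction $\psi \colon \C(\S\d,j) \to \C(\T\d,k)$ that sends $\C_0(\S\d)$ into $\C_0(\T\d)$. Composing with the quotient projection $k\text-\!\lim \colon \C(\T\d,k) \to \T_\infty$ gives a completely positive contraction $\C(\S\d,j) \to \T_\infty$ which vanishes on $\C_0(\S\d)$, since $\psi(\C_0(\S\d)) \subset \C_0(\T\d) = \ker(k\text-\!\lim)$. Because $\C_0(\S\d)$ is a kernel in $\C(\S\d,j)$ (established earlier in the excerpt) and $\S_\infty = \C(\S\d,j)/\C_0(\S\d)$ carries the operator system quotient structure, I can invoke the universal property of the quotient in the form of Corollary~\ref{cor:upropcont}: a contractive completely positive map vanishing on the kernel factors through the quotient. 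This yields a unique completely positive contraction $\varphi_\infty \colon \S_\infty \to \T_\infty$ satisfying $\varphi_\infty(a + \C_0(\S\d)) = (k\text-\!\lim_n \varphi_n(a_n))$ for representatives $a\d \in \C(\S\d,j)$, which is exactly the asserted identity $\varphi_\infty(\jlim_n a_n) = \klim_n \varphi_n(a_n)$.

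The only genuinely nontrivial points to verify are that $\varphi_\bullet$ respects the two kernels and that it is completely positive and contractive at the level of the product systems. Contractivity of the componentwise map is immediate from $\sup_n \norm{\varphi_n(a_n)} \le \sup_n \norm{a_n}$, and preservation of null nets follows from $\lim_n \norm{\varphi_n(a_n)} \le \lim_n \norm{a_n} = 0$. For complete positivity I would note that the matrix amplification $\varphi_\bullet\up\nu$ acts componentwise under the identification \eqref{eq:nets_matrix_amp}, namely as $(\varphi_n\up\nu)_n$, so positivity of $[a_{ij,\blob}]$ in $M_\nu(\nets(\S\d))$ means positivity of each $[a_{ij,n}]$ in $M_\nu(\S_n)$, which $\varphi_n\up\nu$ preserves. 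The main obstacle, such as it is, is bookkeeping rather than conceptual: one must be careful that the hypothesis "takes $j$-convergent to $k$-convergent nets" is exactly what is needed to land in $\C(\T\d,k)$ before quotienting, and that Corollary~\ref{cor:upropcont} applies because the induced map is manifestly contractive (as a composition of contractions), not merely unital; this is why the corollary, rather than the purely unital Proposition~\ref{prop:upropunital}, is the right tool here.
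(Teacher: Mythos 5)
Your proof is correct and follows essentially the same route as the paper's: both define the componentwise map on the convergent nets (you merely start one step earlier, on all of $\nets(\S\d)$, before restricting), verify complete positivity via the identification $M_\nu(\C(\S\d,j))\cong \C(M_\nu(\S)\d,j\up\nu)$, compose with the quotient map $\klim$, and factor through the quotient via Corollary~\ref{cor:upropcont}. Your explicit verification that null nets map to null nets is a detail the paper leaves implicit, but it is the same argument.
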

\begin{proof}
The map $\varphi\d$ defines a completely positive contraction $\C(\S\d,j)\to\C(\T,k)$ by $a\d\mapsto \varphi\d(a\d)=(\varphi_n(a_n))_{n\in N}$. Note, that complete positivity follows from the identification $M_\nu(\C(\S\d,j))\cong \C(M_\nu(\S)\d,j\up\nu)$.
    Composing with the quotient map $\klim$ and invoking the universal property (Corollary~\ref{cor:upropcont}) for the resulting cp contraction $\C(\S\d,j)\to\T_\infty$ proves the statement.
\end{proof}

By the same proof, one can show a more general version of Proposition~\ref{prop:jjconvlim} where the index set $N$ of $(\S,j)$ and $M$ of $(\T,k)$ need not be the same but instead are connected via a monotone map $f:N\to M$.
For instance, the index set of $(\S,j)$ might be a cofinal subset $N\subset N$ of the index set $M$ of $(\T,j)$ with $f:N\hookrightarrow M$ the inclusion map.
The universal property of the limit operator system is a direct consequence of Proposition~\ref{prop:jjconvlim} (cp.\ \cite[Prop.~13 \& 14]{jdynamics}):

\begin{proposition}[Universal property of the limit operator system]\label{prop:uproplimit}
    Let $(\S,j)$ be a soft inductive system of operator systems, and let $\T$ be another operator system.
    Let $\varphi\d$ be a net of completely positive contractions $\S_n \to \T$ which maps $j$-convergent nets to Cauchy nets in $\T$, then there is a (unique) completely positive contraction $\varphi_\infty: \S_\infty \to \overline\T$ into the completion of $\T$ such that $\varphi_\infty(\jlim a_n) = \lim_n \varphi_n(a_n)$.

    Let $\tilde \S_\oo$ be a norm-complete operator system and $\oj n\oo:\S_n\to\tilde\S_\oo$ be a net of unital completely positive maps mapping \jt-convergent to Cauchy nets.
    For $a\d\in\C(\S\d,j)$, set $\ojlim_n a_n = \lim_n \oj\oo na_n\in \tilde\S_\oo$.
    Suppose that for every operator system $\T$ and every net $\varphi_n : \S_n \to \T$ of unital completely maps taking \jt-convergent to Cauchy nets, there exists a map $\varphi_\oo : \tilde\S_\oo \to \T$ such that $\varphi_\oo(\jlim_na_n) = \lim_n \varphi_n(a_n)$, $a\d\in\C(\S\d,j)$.
    Then there is a complete order isomorphism $\psi : \tilde\S_\oo \to \S_\oo$ such that $\jlim_n a_n = \psi(\ojlim_na_n)$, $a\d\in\C(\S\d,j)$.
\end{proposition}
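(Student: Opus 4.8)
The plan is to obtain both assertions as consequences of \cref{prop:jjconvlim}, using the trivial \emph{constant} system to model the completion of the target. For the first assertion, I would view $\overline{\T}$ as the limit of the soft inductive system $(\T,k)$ with $\T_n=\overline{\T}$ for all $n$ and $k_{nm}=\id_{\overline{\T}}$; as in the Completion example, a net in this system is $k$-convergent precisely when it is Cauchy in $\overline{\T}$, and its $k$-limit is the norm limit. A net $\varphi_n\colon\S_n\to\T\subset\overline{\T}$ of completely positive contractions that carries \jt-convergent nets to Cauchy nets then carries them to $k$-convergent nets, so \cref{prop:jjconvlim} directly supplies a completely positive contraction $\varphi_\infty\colon\S_\infty\to\overline{\T}$ with $\varphi_\infty(\jlim_n a_n)=\klim_n\varphi_n(a_n)=\lim_n\varphi_n(a_n)$. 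Uniqueness is free, since every element of $\S_\infty$ is of the form $\jlim_n a_n$, so the displayed relation determines $\varphi_\infty$ on all of $\S_\infty$; unitality of the $\varphi_n$ passes to $\varphi_\infty$ by evaluating on the \jt-convergent net $1\d$.

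For the second assertion I would run the standard argument that a universal object is unique up to canonical isomorphism. First I would feed the net $\oj\oo n\colon\S_n\to\tilde\S_\oo$ (which is ucp and sends \jt-convergent nets to Cauchy nets by hypothesis) into the universal property of $\S_\infty$ just established, producing a ucp map $\Phi\colon\S_\infty\to\tilde\S_\oo$ with $\Phi(\jlim_n a_n)=\ojlim_n a_n$. Symmetrically, feeding the net $\j\oo n\colon\S_n\to\S_\infty$ into the assumed universal property of $\tilde\S_\oo$ produces a ucp map $\psi\colon\tilde\S_\oo\to\S_\infty$ with $\psi(\ojlim_n a_n)=\jlim_n a_n$. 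This $\psi$ is the candidate isomorphism in the statement.

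It then remains to check that $\Phi$ and $\psi$ are mutually inverse. On the $\S_\infty$ side this is immediate: $(\psi\circ\Phi)(\jlim_n a_n)=\jlim_n a_n$, and because every element of $\S_\infty$ is such a limit, $\psi\circ\Phi=\id_{\S_\infty}$. For the reverse composition one computes $(\Phi\circ\psi)(\ojlim_n a_n)=\ojlim_n a_n$, so $\Phi\circ\psi$ fixes $\overline{\bigcup_n\oj\oo n(\S_n)}$ pointwise; I expect \textbf{this} to be the only real obstacle, since concluding $\Phi\circ\psi=\id_{\tilde\S_\oo}$ requires that these \ojt-limits be dense in $\tilde\S_\oo$ (the analogue of $\S_\infty=\overline{\bigcup_n\j\oo n(\S_n)}$). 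I would secure this by noting that $\psi\circ\Phi=\id$ forces $\Phi$ to be bounded below, hence to have closed range equal to $\overline{\bigcup_n\oj\oo n(\S_n)}$, and then argue that this range exhausts $\tilde\S_\oo$ — either by building density into the hypotheses on $(\tilde\S_\oo,\oj\oo\blob)$ or by testing the universal property of $\tilde\S_\oo$ against a suitable quotient. Granting this, $\Phi$ and $\psi$ are mutually inverse ucp bijections, and a ucp bijection with ucp inverse is automatically a complete order isomorphism; this yields the desired $\psi$ with $\jlim_n a_n=\psi(\ojlim_n a_n)$.
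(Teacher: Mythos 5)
Your handling of the first assertion is exactly the paper's argument (apply Proposition~\ref{prop:jjconvlim} to the constant system), and your construction of the two maps in the second part also matches the paper in substance: the paper obtains your $\Phi$ by factoring $a\d\mapsto\ojlim_n a_n$ through the quotient $\S_\oo=\C(\S\d,j)/\C_0(\S\d)$ via Corollary~\ref{cor:upropcont}, which is the same map you recover from the already-established universal property of $\S_\oo$; the direction $\psi\circ\Phi=\id_{\S_\oo}$ is unproblematic since $\jlim$ is surjective. The problem is the step you yourself flagged, $\Phi\circ\psi=\id_{\tilde\S_\oo}$, and neither of your proposed repairs closes it.

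The missing ingredient is the \emph{uniqueness} clause of the hypothesised universal property. Although the second paragraph of the statement literally says only "there exists a map" (and writes $\jlim$ where $\ojlim$ is meant), the intended reading --- consistent with the "(unique)" in the first assertion and with the parallel Propositions~\ref{prop:univ_prop} and~\ref{prop:upropunital} --- is that $\varphi_\oo$ is ucp and \emph{unique}; the paper's closing phrase "basic reasoning for universal properties" is precisely the invocation of this uniqueness. With it, your obstacle disappears in one line: apply the hypothesis with $\T=\tilde\S_\oo$ and $\varphi_n=\oj\oo n$; both $\id_{\tilde\S_\oo}$ and $\Phi\circ\psi$ are ucp maps sending $\ojlim_n a_n$ to $\lim_n\oj\oo n a_n=\ojlim_n a_n$, so uniqueness forces $\Phi\circ\psi=\id_{\tilde\S_\oo}$, and no density of $\bigcup_n\oj\oo n(\S_n)$ is needed. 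Without uniqueness the statement is actually \emph{false}, so your fallback of "testing the universal property against a suitable quotient" cannot succeed: take the constant system $\S_n=\CC$ (so $\S_\oo=\CC$) and $\tilde\S_\oo=M_2$ with $\oj\oo n(\lambda)=\lambda 1_{M_2}$; every ucp net $\varphi_n:\CC\to\T$ is $\lambda\mapsto\lambda 1_\T$, and the ucp map $x\mapsto\tfrac12\tr(x)\,1_\T$ satisfies the required relation $\varphi_\oo(\ojlim_n a_n)=\lim_n\varphi_n(a_n)$, so the existence-only property holds for $(M_2,\oj\oo\blob)$, yet $M_2$ is not completely order isomorphic to $\CC$. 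Your isometry observation is correct as far as it goes ($\psi\circ\Phi=\id$ makes $\Phi$ isometric with closed range $\overline{\bigcup_n\oj\oo n(\S_n)}$), but the example shows this range genuinely need not exhaust $\tilde\S_\oo$, and building density into the hypotheses would prove a different statement than the one asserted.
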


\begin{proof}
    The first part is a straightforward application of Proposition~\ref{prop:jjconvlim} to the constant inductive system $(\T, \id)$.
    As for the second part, let $\pi:\C(\S\d,j)\to\T$ denote the map $a\d\to \lim_n \varphi_n(a_n)$. 
    It follows right from the definition that $\pi$ is contractive.
    The identification $M_\nu(\C(\S\d,j))\cong \C(M_\nu(\S)\d,j\up\nu)$ and the cones of $M_\nu(\mc R)$ being closed imply that $\pi$ is completely positive.
    It is also clear that $\pi$ vanishes on $\C_0(\S\d)$.
    By the universal property of the quotient operator system, there exists a completely positive contraction $\psi:\S_\infty\to\mc R$ such that $\pi = \psi\circ\jlim$.
    The last bit is then the basic reasoning for universal properties.
\end{proof}

From the universal property, we conclude that Lemma~\ref{lem:subsystem limit} holds in the operator system category:

\begin{corollary}\label{cor:coi limits}
    The limit of a soft inductive system of operator systems is completely order isomorphic to the limit of a subsystem. 
\end{corollary}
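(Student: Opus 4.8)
The plan is to deduce the statement from the uniqueness clause of the universal property, Proposition~\ref{prop:uproplimit}, which characterizes the limit operator system up to complete order isomorphism. Fix a cofinal subset $N_0\subset N$. The subsystem is $\{\S_n\}_{n\in N_0}$ equipped with the \emph{same} connecting maps $\j nm$ ($n>m$, $n,m\in N_0$); it is itself a soft inductive system of operator systems, so its limit $\tilde\S_\oo$ is a (complete) operator system carrying ucp maps, and the canonical limit map sends an $N_0$-indexed \jt-convergent net $(a_m)_{m\in N_0}$ to $\jlim_{m\in N_0}a_m\in\tilde\S_\oo$. For \emph{every} $n\in N$ and $a_n\in\S_n$, the basic net $\j\blob n a_n$ restricted to $N_0$ is \jt-convergent over $N_0$: asymptotic transitivity survives restriction because the inner $\limsup$ over the cofinal set $N_0$ is dominated by the one over $N$. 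Hence I can define ucp maps
$k_{\oo n}\colon\S_n\to\tilde\S_\oo$, $a_n\mapsto \jlim_{m\in N_0}\j mn a_n$, which for $n\in N_0$ reduce to the subsystem's canonical map. (Complete positivity and unitality are inherited via the identification $M_\nu(\C(\S\d,j))\cong\C(M_\nu(\S)\d,j\up\nu)$ of Lemma~\ref{lem:matrix_amp_lemma} together with the fact that the quotient map is ucp.)

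First I would show that $(k_{\oo n})_{n\in N}$ sends \jt-convergent nets to Cauchy nets and in fact that $\klim_n a_n:=\lim_n k_{\oo n}a_n=\jlim_{m\in N_0}a_m$ for all $a\d\in\C(\S\d,j)$. Indeed, both $(a_m)_{m\in N_0}$ and the basic net $(\j mn a_n)_{m\in N_0}$ are \jt-convergent over $N_0$, so by linearity and the fact that the seminorm of a \jt-convergent net equals the norm of its limit,
\[
    \norm{ k_{\oo n}a_n - \jlim_{m\in N_0}a_m }
    = \seminorm{ (a_m - \j mn a_n)_{m\in N_0} }
    = \limsup_{m\in N_0}\,\norm{ a_m - \j mn a_n }.
\]
As $n$ increases through $N$ this tends to $0$, being dominated by $\limsup_{m\in N}\norm{a_m-\j mn a_n}$, which vanishes by \jt-convergence of $a\d$. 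Thus $(k_{\oo n}a_n)_n$ converges to $\jlim_{m\in N_0}a_m$, so it is Cauchy and $\klim_n a_n=\jlim_{m\in N_0}a_m$.

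Next I would check the hypothesis of the uniqueness part of Proposition~\ref{prop:uproplimit}. Given any operator system $\T$ and a net of ucp maps $\varphi_n\colon\S_n\to\T$ sending \jt-convergent nets to Cauchy nets, its restriction to $N_0$ sends $N_0$-indexed \jt-convergent nets to Cauchy nets: for such $b\d=(b_m)_{m\in N_0}$ and each fixed $m$, the basic net $\j\blob m b_m$ is \jt-convergent over $N$, so $(\varphi_{m'}\j{m'}m b_m)_{m'}$ is Cauchy, while contractivity of $\varphi_{m'}$ gives $\limsup_{m'\in N_0}\norm{\varphi_{m'}b_{m'}-\varphi_{m'}\j{m'}m b_m}\le\limsup_{m'\in N_0}\norm{b_{m'}-\j{m'}m b_m}\to0$; a standard $\eps/3$-argument then makes $(\varphi_{m'}b_{m'})_{m'}$ Cauchy. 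Applying the existence part of the universal property to the subsystem yields a cp contraction $\varphi_\oo\colon\tilde\S_\oo\to\overline\T$ with $\varphi_\oo(\jlim_{m\in N_0}a_m)=\lim_{m\in N_0}\varphi_m(a_m)$; since $N_0$ is cofinal and $(\varphi_n(a_n))_{n\in N}$ converges, the right-hand side equals $\lim_{n\in N}\varphi_n(a_n)$, and combined with $\klim_n a_n=\jlim_{m\in N_0}a_m$ this is exactly the factorization $\varphi_\oo(\klim_n a_n)=\lim_n\varphi_n(a_n)$ demanded by Proposition~\ref{prop:uproplimit}. The uniqueness clause then produces a complete order isomorphism $\psi\colon\tilde\S_\oo\to\S_\oo$ with $\psi(\jlim_{m\in N_0}a_m)=\jlim_{n\in N}a_n$.

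I expect the main obstacle to be the simultaneous bookkeeping that passage to the cofinal $N_0$ preserves the \jt-convergence structure \emph{and} complete positivity. The order/metric part is precisely the normed-space content of Lemma~\ref{lem:subsystem limit}, controlled by the $\limsup$-domination displayed above; complete positivity is carried through by the matrix-amplification identification $M_\nu(\C(\S\d,j))\cong\C(M_\nu(\S)\d,j\up\nu)$ of Lemma~\ref{lem:matrix_amp_lemma}, which guarantees that all maps occurring in the argument are genuinely completely positive rather than merely positive, so that the resulting $\psi$ is a complete order isomorphism and not just an isometry.
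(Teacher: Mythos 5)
Your proof is correct and takes essentially the same route as the paper: the paper's proof of Corollary~\ref{cor:coi limits} simply observes that, once the universal property (Proposition~\ref{prop:uproplimit}) is available, the universal-property proof of Lemma~\ref{lem:subsystem limit} applies verbatim in the operator system category, and your argument is precisely that proof carried out explicitly --- defining the ucp maps $k_{\oo n}$ via restricted basic nets, verifying $\klim_n a_n = \jlim_{m\in N_0} a_m$, checking the restriction of test nets $\varphi_\d$ to $N_0$, and invoking the uniqueness clause. The details you fill in (the $\limsup$-domination over the cofinal subset, the $\eps/3$ Cauchy argument, and complete positivity via $M_\nu(\C(\S\d,j))\cong\C(M_\nu(\S)\d,j\up\nu)$) are exactly the ones the paper leaves implicit.
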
 

\begin{proof}
    Since we established the universal property of the soft inductive limit in the operator system category (cp.\ Proposition~\ref{prop:upropunital}), the proof of Lemma~\ref{lem:subsystem limit} (which is the same statement in the category of normed spaces) now applies in the operator system category.
\end{proof}



\subsection{\texorpdfstring{$\Cstar$}{C*}-algebras}\label{sec:cstar}

In the following, all $\Cstar$-algebras are assumed to be unital if not stated otherwise.
We recall the following definition from \cite{jdynamics}:

\begin{definition}
    A {\bf soft inductive system of $\Cstar$-algebras} over a directed set $(N,\le)$ is a soft inductive system of normed spaces $(\A,j)$ where each $\A_n$ is a $\Cstar$-algebra and where the connecting $\j nm$ are ucp maps that are {\bf asymptotically multiplicative}
    \begin{equation}\label{eq:asymptotic_multiplicativity}
    \lim_{n\gg m}\,\norm{\j nm((\j ml a_l)(\j ml b_l))- (\j nl a_l)(\j nl b_l)} =0\quad\forall l\in N,\, a_l,b_l\in \A_l.
    \end{equation}
\end{definition}

In particular, a soft inductive system of $\Cstar$-algebras is a soft inductive system of operator systems.
Therefore, the net of units $ 1\d$ is \jt-convergent and a net $a\d$ is \jt-convergent if and only if its adjoint $a\d^*$ is.
The assumption of asymptotic multiplicativity further guarantees that products of \jt-convergent nets are \jt-convergent. In fact, this is equivalent to asymptotic multiplicativity (see Proposition~\ref{prop:products} below).
Therefore $\C(\A\d,j)$ is a unital $\Cstar$-subalgebra of $\nets(\A\d)$ and $\C_0(\A\d)$ is a closed two-sided ideal.
Null nets form a closed two-sided ideal.
Therefore, the limit space is naturally a unital $\Cstar$-algebra.
It can be shown that the continuous functional calculus $f(a\d)$ of a \jt-convergent net $a\d$ of normal elements is again \jt-convergent and that $\Sp(\jlim_n a_n) \subset \bigcap_{k}\bigcup_{n>k} \Sp(a_n)$.

Among soft inductive systems of operator systems, soft inductive systems of $\Cstar$-algebras are characterized as follows:

\begin{proposition}\label{prop:products}
    Let $(\A,j)$ be a soft inductive system of operator systems where each $\A_n$ is a $\Cstar$-algebra.
    Consider the following statements:
    \begin{enumerate}[(a)]
        \item\label{it:products1}
            $n$-wise products of \jt-convergent nets are \jt-convergent.
        \item\label{it:products2}
            the connecting maps $\{\j nm\}$ are asymptotically multiplicative, see \eqref{eq:asymptotic_multiplicativity}. By definition, this means that $(\A,j)$ is a soft inductive system of $\Cstar$-algebras.
        \item \label{it:products3}
            The limit space $\A_\oo$, naturally embedded into the quotient $\Cstar$-algebra $\nets(\A\d)/\C_0(\A\d)$, is closed under products.
    \end{enumerate}
    Then (a) $\iff$ (b) $\implies$ (c). If $(\A,j)$ is asymptotically isometric, then they're all equivalent.
\end{proposition}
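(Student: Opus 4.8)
The plan is to reduce everything to the behaviour of products on \emph{basic} nets, and then to exploit that the ambient space $\Q(\A\d)=\nets(\A\d)/\C_0(\A\d)=\prod_n\A_n/\bigoplus_n\A_n$ is a genuine $\Cstar$-algebra, inside which the limit space is the closed subspace
\[
    \A_\oo=\overline{\bigcup_n \j\oo n(\A_n)}=q\bigl(\C(\A\d,j)\bigr),\qquad \C(\A\d,j)=q^{-1}(\A_\oo),
\]
where $q\colon\nets(\A\d)\to\Q(\A\d)$ is the quotient $\Cstar$-homomorphism (a $*$-homomorphism, since $\C_0(\A\d)$ is a closed two-sided ideal). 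I will also use that, by \eqref{eq:j_convergent}, $[a\d]=\jlim_n a_n=\lim_m\j\oo m a_m$ holds in the $\Cstar$-norm of $\Q(\A\d)$ for every \jt-convergent net $a\d$.

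First I would prove the equivalence \ref{it:products1} $\Leftrightarrow$ \ref{it:products2}. For two basic nets $\j\blob l a_l$ and $\j\blob l b_l$ the $n$-th entry of their entrywise product is $(\j nl a_l)(\j nl b_l)$, so the \jt-convergence criterion \eqref{eq:j_convergent} for this net reads verbatim
\[
    \lim_{n\gg m}\norm*{(\j nl a_l)(\j nl b_l)-\j nm\bigl((\j ml a_l)(\j ml b_l)\bigr)}=0,
\]
which is exactly the asymptotic multiplicativity \eqref{eq:asymptotic_multiplicativity}. Hence \ref{it:products2} says precisely that products of basic nets are \jt-convergent, and \ref{it:products1} $\Rightarrow$ \ref{it:products2} is immediate by applying \ref{it:products1} to basic nets. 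For \ref{it:products2} $\Rightarrow$ \ref{it:products1} I pass to $\Q(\A\d)$: given \jt-convergent $a\d,b\d$, joint norm-continuity of multiplication in the $\Cstar$-algebra $\Q(\A\d)$ together with $[a\d]=\lim_m\j\oo m a_m$ gives
\[
    [a\d b\d]=[a\d][b\d]=\lim_m [\j\blob m a_m][\j\blob m b_m]=\lim_m\bigl[(\j\blob m a_m)(\j\blob m b_m)\bigr].
\]
By \ref{it:products2} each term on the right lies in the closed subspace $\A_\oo$, hence so does $[a\d b\d]$; since $\C(\A\d,j)=q^{-1}(\A_\oo)$, this yields $a\d b\d\in\C(\A\d,j)$, i.e.\ \ref{it:products1}.

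The implication \ref{it:products2} $\Rightarrow$ \ref{it:products3} is then free: by \ref{it:products1} the space $\C(\A\d,j)$ is closed under $n$-wise products, and since $q$ is multiplicative its image $\A_\oo=q(\C(\A\d,j))$ is a $\Cstar$-subalgebra of $\Q(\A\d)$. For the converse \ref{it:products3} $\Rightarrow$ \ref{it:products1} I would run the same dictionary backwards: for \jt-convergent $a\d,b\d$ the classes $[a\d],[b\d]$ lie in $\A_\oo$, so by \ref{it:products3} the product $[a\d][b\d]=[a\d b\d]$ again lies in $\A_\oo=q(\C(\A\d,j))$, and $\C(\A\d,j)=q^{-1}(\A_\oo)$ forces $a\d b\d\in\C(\A\d,j)$. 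The step I expect to be genuinely delicate is \ref{it:products2} $\Rightarrow$ \ref{it:products1}, where the passage from basic to arbitrary \jt-convergent nets must be carried out inside $\Q(\A\d)$, interchanging the outer limit $\lim_m$ with the double limit $\lim_{n\gg m}$ encoded in the $\Q(\A\d)$-norm and using that $\A_\oo$ is norm-closed. The asymptotic isometry hypothesis named in the statement is the natural setting for the alternative, more quantitative argument via the Cauchy-net criterion \eqref{eq:asymptotically_isometric2} — showing directly that $(\j\oo n(a_nb_n))_n$ is Cauchy in $\A_\oo$ — whereas the $\Cstar$-algebraic route above realises \ref{it:products3} $\Rightarrow$ \ref{it:products1} through the identification $\A_\oo=q(\C(\A\d,j))$.
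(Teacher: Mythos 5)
Your proof is correct, and for most of the statement it follows the paper's own (very terse) argument: the paper also observes that asymptotic multiplicativity \eqref{eq:asymptotic_multiplicativity} is verbatim the \jt-convergence criterion \eqref{eq:j_convergent} for entrywise products of basic nets, gets \ref{it:products2} $\Rightarrow$ \ref{it:products1} from density of basic nets (your passage through $\Q(\A\d)$, using $[a\d]=\lim_m[\j\blob m a_m]$, norm-continuity of multiplication, and closedness of $\A_\oo$, is exactly this density argument written out), and declares \ref{it:products2} $\Rightarrow$ \ref{it:products3} clear. Where you genuinely diverge is \ref{it:products3} $\Rightarrow$ \ref{it:products1}: the paper derives \ref{it:products3} $\Rightarrow$ \ref{it:products2} from the asymptotic isometricity assumption \eqref{eq:asymptotically_isometric}, whereas your "dictionary backwards" uses only the identity $\C(\A\d,j)=q^{-1}(\A_\oo)$, which the paper itself records in Section~\ref{sec:norm_sp} and which holds for \emph{every} soft inductive system --- it is immediate from $\A_\oo=q(\C(\A\d,j))$ together with $\C_0(\A\d)\subseteq\C(\A\d,j)$, since then $q^{-1}(q(\C(\A\d,j)))=\C(\A\d,j)+\C_0(\A\d)=\C(\A\d,j)$. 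Applied to products of basic nets, this pullback yields their \jt-convergence, i.e.\ \eqref{eq:asymptotic_multiplicativity}, so your argument in fact establishes the unconditional equivalence of all three conditions and hence the stated proposition a fortiori, with the isometricity hypothesis playing no role. Your closing paragraph undersells this: the genuine obstruction that isometricity removes concerns \eqref{eq:asymptotically_isometric2}, i.e.\ nets $a\d$ for which $(\j\oo n a_n)_n$ is merely Cauchy in $\A_\oo$ --- a strictly weaker hypothesis than $[a\d]\in\A_\oo$, and it is the former, not the latter, that fails to force \jt-convergence in general (cf.\ the footnote cited after \eqref{eq:asymptotically_isometric2}). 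Likewise, the step you flag as "genuinely delicate" in \ref{it:products2} $\Rightarrow$ \ref{it:products1} involves no limit interchange at all: everything takes place in the norm of the $\Cstar$-algebra $\Q(\A\d)$, and norm-closedness of $\A_\oo$ is all that is needed, exactly as you wrote. In short: same skeleton as the paper for \ref{it:products1} $\Leftrightarrow$ \ref{it:products2} $\Rightarrow$ \ref{it:products3}, but a cleaner and strictly stronger route for the converse, which you should state explicitly rather than deferring to the isometricity hypothesis.
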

\begin{proof}
    \ref{it:products1} $\Rightarrow$ \ref{it:products2} $\Rightarrow$ \ref{it:products3} are clear.
    Note that \ref{it:products1}, i.e., asymptotic mulitplicativity \eqref{eq:asymptotic_multiplicativity}, is equivalent to $n$-wise products of basic nets being $j$-convergent. 
    Since basic nets are dense, we have \ref{it:products2} $\Rightarrow$ \ref{it:products1}.
    If we assume asymptotic isometricity (see \eqref{eq:asymptotically_isometric}), then \ref{it:products3} $\Rightarrow$ \ref{it:products2} follows from \eqref{eq:asymptotically_isometric}.
\end{proof}

Soft inductive systems of $\Cstar$-algebras were originally studied in the countably indexed and strict setting by Blackadar and Kirchberg in \cite{blackadar1997generalized}. When the $\Cstar$-algebras appearing in the system are finite-dimensional, the system is called \emph{NF}, and Blackadar and Kirchberg show in \cite{blackadar1997generalized} that a separable $\Cstar$-algebra is nuclear and quasidiagonal if and only if it is isomorphic to the limit of an NF system. Such $\Cstar$ algebras are called \emph{NF}. (Recall that a $\Cstar$-algebra is \emph{nuclear} provided its tensor product with any other $\Cstar$-algebra admits a unique $\Cstar$-norm (see  \cref{sec:nuclear} for the completely positive approximation property characterization), and a $\Cstar$-algebra is \emph{quasidiagonal} (QD) if there exists a net of cpc maps $\psi_n:\A\to M_{\nu(n)}$ that is pointwise asymptotically multiplicative ($\|\psi_n(a)\psi_n(b)-\psi_n(ab)\|\to 0$ for all $a,b\in\A$) and pointwise asymptotically isometric ($\|\psi_n(a)\|\to \|a\|$  for all $a\in\A$). 

\begin{corollary}
    Let $(\A,j)$ be a soft inductive system of $\Cstar$-algebras where each $\A_n$ is a finite-dimensional $\Cstar$-algebra. Then the limit $\A_\oo$ is nuclear and quasidiagonal. In particular, if $\A_\oo$ is separable, then it is NF.
\end{corollary}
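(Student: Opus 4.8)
The plan is to build a soft inductive system of $\Cstar$-algebras whose connecting maps are the finite-dimensional maps witnessing quasidiagonality, and then identify its limit with $\A_\oo$ by the universal property. Concretely, since each $\A_n$ is finite-dimensional, I first observe that the corollary follows once I produce, for the separable limit $\A_\oo$, a net of cpc maps into matrix algebras witnessing both nuclearity (via the completely positive approximation property) and quasidiagonality (via pointwise asymptotic multiplicativity and isometry). The natural strategy is to transport these properties from the constituent $\A_n$ to $\A_\oo$ through the limit maps $\j\oo n$.

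First I would establish nuclearity. Each finite-dimensional $\Cstar$-algebra $\A_n$ is nuclear, hence has the completely positive approximation property, but since $\A_n$ is already finite-dimensional the identity factors trivially through a matrix algebra. The key is that $\A_\oo$ is approximated by the images $\j\oo n(\A_n)$: by construction $\A_\oo=\overline{\bigcup_n \j\oo n(\A_n)}$ (as established following Lemma~\ref{lem:subsystem limit} and in Corollary~\ref{cor:proximinal}). For each $n$ I would compose a ucp splitting of $\j\oo n$ back into $\A_n$ with the inclusion of $\A_n$ into a matrix algebra; the asymptotic transitivity guarantees that these compositions approximate $\id_{\A_\oo}$ pointwise in the cp approximation sense. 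This yields the cpc approximation property, so $\A_\oo$ is nuclear.

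Next I would verify quasidiagonality. Because each $\A_n$ is finite-dimensional, it embeds faithfully as a $\Cstar$-subalgebra of some matrix algebra $M_{\nu(n)}$, giving honest (completely isometric, multiplicative) maps $\iota_n:\A_n\to M_{\nu(n)}$. The idea is to set $\psi_n = \iota_n\circ r_n:\A_\oo\to M_{\nu(n)}$ where $r_n$ is a cpc right-inverse lift of $\j\oo n$ (existence of such lifts follows from the positive lifting property in Corollary~\ref{cor:proximinal}, or from viewing $\A_\oo$ concretely inside $\nets(\A\d)/\C_0(\A\d)$). For $a=\jlim_n a_n, b=\jlim_n b_n\in\A_\oo$, asymptotic multiplicativity of the connecting maps (the defining property \eqref{eq:asymptotic_multiplicativity} of a soft inductive system of $\Cstar$-algebras) forces $\|\psi_n(a)\psi_n(b)-\psi_n(ab)\|\to 0$, since $\iota_n$ is multiplicative and the lifts track the $\Cstar$-structure asymptotically; likewise $\|\psi_n(a)\|\to\|a\|$ because $\iota_n$ is isometric and the seminorm of a \jt-convergent net equals the norm of its limit. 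Thus $\A_\oo$ is quasidiagonal.

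The main obstacle will be the interplay between the \emph{soft} (merely asymptotically transitive and asymptotically multiplicative) structure and the \emph{pointwise} requirements of the quasidiagonality and nuclearity definitions. Unlike the strict setting, the connecting maps need not compose exactly, so I cannot simply take honest splittings; instead I must choose lifts $a\mapsto a\d$ of limit elements (as in Corollary~\ref{cor:proximinal}) and carefully pass the index $n$ to infinity, controlling the error terms $\|\j nm((\j ml a_l)(\j ml b_l))-(\j nl a_l)(\j nl b_l)\|$ via the $\lim_{n\gg m}$ double-limit. The precise bookkeeping of these asymptotic estimates—ensuring the matrix maps $\psi_n$ are genuinely pointwise asymptotically multiplicative and isometric on all of $\A_\oo$ and not merely on basic nets—is where the real work lies; the density of images of basic nets in $\A_\oo$ will be the tool that reduces the general case to the already-understood behavior on $\j\oo n(\A_n)$. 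The final separable conclusion, that $\A_\oo$ is NF, is then immediate from the Blackadar--Kirchberg characterization recalled above.
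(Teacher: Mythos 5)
There is a genuine gap, and it sits at the heart of both halves of your argument: the cpc ``splittings'' and ``right-inverse lifts'' $r_n:\A_\oo\to\A_n$ of the maps $\j\oo n$ that you invoke do not exist a priori. A soft inductive system comes with no backward maps whatsoever, and \cref{cor:proximinal} does not supply them: it lifts a \emph{single} positive element of $\A_\oo$ to a positive net in $\C(M_\nu(\A)\d,j\up\nu)$, which is far from a \emph{linear}, completely positive lift defined even on a finite-dimensional subsystem. The existence of ucp lifts $\S\to\nets(\A\d)=\prod_n\A_n$ on finite-dimensional operator subsystems $\S\subset\A_\oo$ is precisely the local lifting property, which the paper deduces \emph{from} nuclearity of $\A_\oo$; so using such lifts to prove nuclearity (your second paragraph, where ``ucp splitting of $\j\oo n$'' composed with matrix embeddings is supposed to yield the CPAP) is circular --- positing cpc maps $\A_\oo\to\A_n$ asymptotically inverting $\j\oo n$ is tantamount to assuming the completely positive approximation property itself. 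This is exactly why the split case (Proposition~\ref{prop:split nuc}) is easy while the present corollary is not.

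The paper's proof resolves this in the opposite logical order. Nuclearity is obtained first, from Ozawa--Sato's \emph{one-way} CPAP (\cite[Theorem 5.1]{Sato2019}, via the argument of \cite[Corollary 3.2]{C23}), whose whole point is that only the forward data are needed: the cpc maps out of the finite-dimensional algebras together with the ucp embedding $\A_\oo\hookrightarrow\nets(\A\d)/\C_0(\A\d)$, with no maps from $\A_\oo$ back into the $\A_n$. Only then, with nuclearity and hence the local lifting property in hand, does one obtain ucp lifts $\psi_\S:\S\to\prod_n\A_n$ on finite-dimensional subsystems; composing with the coordinate projections and extending by Arveson gives the pointwise asymptotically multiplicative and asymptotically isometric cpc maps into matrix algebras witnessing quasidiagonality. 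Your estimates in the quasidiagonality step (that the difference between the lift of a product and the product of lifts is a null net, and that the quotient seminorm recovers the limit norm) are correct \emph{once genuine lifts exist}, but your proposal offers no valid construction of them, so both the nuclearity and the quasidiagonality arguments fail as written.
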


\begin{proof}
    We identify $\A_\oo$ with $\overline{\bigcup_n j_{\oo n}(\A_n)}\subset \Q(\A\d)$. That $\A_\infty$ is nuclear follows from Ozawa and Sato's One-Way-CPAP (which can be found implicitly in \cite{Ozawa2002} (via \cite{KS03}); see \cite[Theorem 5.1]{Sato2019} for the explicit statement and proof) using the same argument as in \cite[Corollary 3.2]{C23} (which follows \cite[Theorem 2.13]{CW1}).\footnote{Even though the systems here are not necessarily strict, the map $\Phi$ from \cite[Corollary 3.2]{C23} is still just the inclusion of $\A_\oo$ into $\ell^\infty (N,\A_\oo)/c_0(N, \A_\oo)$ as constant sequences.} 
    
    Now, since $\A_\oo$ is nuclear, it has the local lifting property, which means in particular that for any finite-dimensional operator subsystem $\S$ of $\A$, there exists a ucp map $\psi_{\S}:\S\to \nets(\A\d)$ so that $[\psi_{\S}(a)]=a$ for all $a\in \S$. Composing these with the projections $\nets(\A\d)\to \A_n$, we obtain ucp maps $\psi_{\S,n}:\S\to \A_n\subset M_{\nu(n)}$ (where $M_{\nu(n)}$ is a full matrix algebra containing $\A_n$). By Arveson's extension theorem, these extend to ucp maps $\A\to \A_n\subset M_{\nu(n)}$ which we also denote with $\psi_{\S,n}$. This gives us our desired net of pointwise asymptotically multiplicative and pointwise asymptotically isometric cpc maps. 
\end{proof}

\section{Nuclearity}\label{sec:nuclear}
Originally, a $\Cstar$-algebra was said to be nuclear when its algebraic tensor product with any other $\Cstar$-algebra admits a unique $\Cstar$-norm, or equivalently, that the maximum and minimum tensor product norms agree.
In the 70's, Choi and Effros and (independently) Kirchberg characterized nuclearity for $\Cstar$-algebras in terms of a completely positive approximation property:  

\begin{theorem}[Choi--Effros/Kirchberg]\label{thm:CEK}
    A $\Cstar$-algebra $\A$ is nuclear if there exists a net of cpc maps $\A\xrightarrow{\psi_n}M_{\nu_n}\xrightarrow{\varphi_n}\A$ such that $\varphi_n\circ\psi_n$ converge pointwise in norm to the identity map on $\A$. When $\A$ is separable, the index can be taken to be $\mathbb{N}$. 
\end{theorem}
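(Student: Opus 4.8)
The plan is to prove the stated implication (existence of the approximating net forces nuclearity) by a tensor-norm manipulation, and then to indicate how the converse completes the classical Choi--Effros/Kirchberg characterization. I would first isolate the two standard facts that drive the argument: every matrix algebra is nuclear, so $M_{\nu}\ox_{\min}B = M_{\nu}\ox_{\max}B$ isometrically for every $\Cstar$-algebra $B$; and a completely positive contraction $\theta\colon C\to D$ amplifies to contractions $\theta\ox\id_B$ on both the minimal and the maximal tensor product, since cp maps are functorial for $\ox_{\min}$ and $\ox_{\max}$ with cb-norm $\norm{\theta(1)}\le 1$.

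Fix a $\Cstar$-algebra $B$ and an element $x$ of the algebraic tensor product $\A\odot B$; the inequality $\norm{x}_{\min}\le\norm{x}_{\max}$ being automatic, it suffices to show $\norm{x}_{\max}\le\norm{x}_{\min}$. The key step is to route $x$ through a matrix algebra so as to convert a minimal norm into a maximal one: the map $\psi_n\ox\id_B\colon \A\ox_{\min}B\to M_{\nu_n}\ox_{\min}B$ is contractive, so $\norm{(\psi_n\ox\id_B)x}_{\min}\le\norm{x}_{\min}$; nuclearity of $M_{\nu_n}$ lets me reread the left-hand side in the maximal norm; and then $\varphi_n\ox\id_B\colon M_{\nu_n}\ox_{\max}B\to\A\ox_{\max}B$ is again contractive, which chains to $\norm{((\varphi_n\circ\psi_n)\ox\id_B)x}_{\max}\le\norm{x}_{\min}$. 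Because $\varphi_n\circ\psi_n\to\id_\A$ pointwise in norm and $x$ is a finite sum of elementary tensors, $((\varphi_n\circ\psi_n)\ox\id_B)x\to x$ in every $\Cstar$-cross-norm, in particular in the maximal one; passing to the limit yields $\norm{x}_{\max}\le\norm{x}_{\min}$. Since $B$ was arbitrary, $\A$ is nuclear. For the separable refinement I would fix a dense sequence in $\A$, apply the approximation on an increasing exhaustion of it, and extract a sequence by a diagonal argument, so that the net can be reindexed over $\NN$.

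The main obstacle is the converse, that nuclearity forces such an approximating net to exist; this is the genuinely deep half of the theorem and the one point where I would invoke external structure theory rather than a direct computation. The route I would take passes to the bidual: nuclearity of $\A$ implies that $\A^{**}$ is an injective von Neumann algebra, hence semidiscrete by Connes' theorem, so the identity of $\A^{**}$ is a point-$\sigma$-weak limit of normal completely positive contractions factoring through matrix algebras. One then transports these factorizations back to $\A$, using that $\A$ is $\sigma$-weakly dense in $\A^{**}$ together with a Hahn--Banach/convexity argument to upgrade point-weak to the point-norm approximation demanded in the statement. I do not expect the soft-inductive-limit machinery developed above to shortcut this direction; its role in the paper is rather to exploit the CPAP once it is in hand.
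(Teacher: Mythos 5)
Your proposal is correct, and there is nothing in the paper to diverge from: Theorem~\ref{thm:CEK} is stated as classical background with no proof, the attribution to Choi--Effros and Kirchberg standing in for one. Your tensor-norm argument for the stated direction is exactly the standard one and is sound: cpc maps tensorize contractively with both $\ox_{\min}$ and $\ox_{\max}$, $M_{\nu_n}\ox_{\min}B=M_{\nu_n}\ox_{\max}B$ converts the norm mid-chain, and pointwise norm convergence of $\varphi_n\circ\psi_n$ on the finitely many legs of $x\in\A\odot B$ passes the estimate to the limit; the diagonal extraction over a dense sequence handles the separable refinement. (One cosmetic point: $\norm{\theta}_{cb}=\norm{\theta(1)}$ presumes a unital domain; for a cpc map on a possibly nonunital algebra one simply uses $\norm{\theta}_{cb}=\norm{\theta}\le 1$, which is all the argument needs.) Note that, as literally phrased, the theorem claims only the implication you prove in full, so your proof already discharges the statement; the converse --- which is the direction the paper actually exploits afterwards, when nuclearity is fed into the induced soft systems before Theorem~\ref{thm:nuclearity_iff} --- is the genuinely deep half, and your sketch (nuclear $\Rightarrow$ $\A^{**}$ injective, Connes' injectivity-implies-semidiscreteness, then pulling the normal factorizations back from $\A^{**}$ to $\A$ via the identification of cp maps $M_\nu\to\A^{**}$ with positive elements of $M_\nu(\A^{**})=(M_\nu(\A))^{**}$ together with the point-weak-to-point-norm upgrade by convexity and Hahn--Banach) is precisely the standard route in the literature the paper cites. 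Flagging that half as imported structure theory rather than reproving it is entirely consonant with the paper's own treatment.
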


Following this, in \cite{Kir95}, Kirchberg defined nuclearity for operator systems using a similar property: a separable operator system $\S$ is \emph{nuclear} if there are completely positive contractions $\varphi_n:\S\to M_{\nu_n}$ and $\psi_n:M_{\nu_n}\to\S$ such that $\varphi_n\circ\psi_n\to \id_\S$ in the point-norm topology.\footnote{The topology on $\S$ is inherited from its order norm, or equivalently from its enveloping $\Cstar$-algebra $\Cstar_{\text{min}}(\S)$. Note that $\S$ need not be complete for this definition.}

\begin{remark}
    In \cite[Theorem 3.1]{han2011approximation}, Han and Paulsen show that Kirchberg's definition of nuclearity agrees with (max,min)-nuclearity of \cite{kavruk2011tensor}, i.e., an operator system $\S$ is nuclear if and only if its minimal tensor product with an arbitrary operator system agrees with the maximal tensor product.
\end{remark} 

The system of completely positive approximations from Theorem~\ref{thm:CEK} induces a soft inductive system of operator systems $(\S,j)$ with $\S_n\coloneqq M_{\nu_n}$ and $\j nm \coloneqq \psi_n\circ\varphi_m$, which we call the \emph{induced soft system}.   
Indeed, one has for each $l\in N$ and $a_l\in \S_l$
\[
    \norm{(\j nl-\j nm\j ml)a_l} \leq \norm{\psi_n} \norm{(\id_\S - \varphi_m\psi_m)\varphi_l(a_l)}\xrightarrow{n\gg m}0.
\] 
Note that, by construction, the induced soft system is split, where the right inverse to $\jlim$ is $\varphi_{\d}$.

\begin{theorem}\label{thm:nuclearity_iff}
    Let $\S$ be a nuclear operator system, then $\S$ is a soft inductive limit of matrix algebras, i.e., there is a soft inductive system $(\S,j)$ with each $\S_n$ being a matrix algebra so that $\S$ is its limit space.
    In fact, the soft inductive system can be chosen to be split. 
    If $\S$ is separable, then the directed system can be chosen to be $N=\NN$, and the soft inductive system can be arranged to be strict. 
\end{theorem}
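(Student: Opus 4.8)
\emph{Setup.} The plan is to identify the limit of the induced soft system constructed above with $\S$ itself. Nuclearity of $\S$ supplies nets of completely positive contractions $\psi_n\colon\S\to M_{\nu_n}$ and $\varphi_n\colon M_{\nu_n}\to\S$ with $\varphi_n\circ\psi_n\to\id_\S$ pointwise in norm; after a routine unitization (possible since $\S$ is unital) I may assume these are ucp, so that $\S_n\coloneqq M_{\nu_n}$ and $\j nm\coloneqq\psi_n\circ\varphi_m$ define a soft inductive system of operator systems (asymptotic transitivity is the estimate already recorded above). The up-maps $\varphi_n\colon\S_n\to\S$ send \jt-convergent nets to Cauchy nets: for $a\d\in\C(\S\d,j)$ one bounds $\norm{\varphi_n a_n-\varphi_m a_m}\le\norm{a_n-\j nm a_m}+\norm{(\varphi_n\psi_n-\id_\S)(\varphi_m a_m)}$, where the first term vanishes as $n\gg m$ by \jt-convergence and the second vanishes as $n\to\oo$ for fixed $m$ since $\varphi_n\psi_n\to\id_\S$ pointwise. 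By the universal property (Proposition~\ref{prop:uproplimit}) this yields a ucp map $\Phi\colon\S_\oo\to\bar\S$ with $\Phi(\jlim_n a_n)=\lim_n\varphi_n(a_n)$.

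\emph{The isomorphism.} In the other direction, for fixed $a\in\S$ the net $(\psi_n a)_n$ is \jt-convergent, because $\norm{\psi_n a-\j nm\psi_m a}=\norm{\psi_n(\id_\S-\varphi_m\psi_m)a}\le\norm{(\id_\S-\varphi_m\psi_m)a}\to0$ uniformly in $n$ as $m\to\oo$; this defines a ucp map $\Sigma\colon\S\to\S_\oo$, $\Sigma(a)=\jlim_n\psi_n a$. I then check that the two are mutually inverse. Directly, $\Phi(\Sigma a)=\lim_n\varphi_n\psi_n a=a$, so $\Phi\circ\Sigma=\id_\S$. For the reverse composition it suffices to test on the dense set $\bigcup_n\j\oo n(\S_n)\subset\S_\oo$ (density established above): for $x\in\S_n$ one gets $\Phi(\j\oo n x)=\lim_k\varphi_k\psi_k\varphi_n x=\varphi_n x$, whence $\Sigma(\varphi_n x)=\jlim_k\psi_k\varphi_n x=\jlim_k\j kn x=\j\oo n x$, and by continuity $\Sigma\circ\Phi=\id_{\S_\oo}$. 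Thus $\Phi$ and $\Sigma$ are mutually inverse ucp maps, hence complete order isomorphisms, and $\S_\oo\cong\S$ (in particular $\Phi$ takes values in $\S$). The section $s\d\colon\S_\oo\cong\S\to\C(\S\d,j)$, $a\mapsto(\psi_n a)_n$, is a contraction with $\jlim\circ\,s\d=\id$, so the system is split.

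\emph{Separable case.} When $\S$ is separable the approximating net may be indexed by $\NN$. To upgrade to a strict system I would pass to a summably soft subsequence and then telescope as in Remark~\ref{rem:summable to coherent}. The key estimate is $\norm{\j nk-\j nm\j mk}=\norm{\psi_n(\id_\S-\varphi_m\psi_m)\varphi_k}\le\norm{(\id_\S-\varphi_m\psi_m)\varphi_k}$; since $\varphi_k$ has finite-dimensional domain, $\varphi_k$ of the unit ball is precompact, so the pointwise convergence $\varphi_m\psi_m\to\id_\S$ is uniform there and the right-hand side tends to $0$ as $m\to\oo$ for each fixed $k$. Choosing a rapidly increasing subsequence $(n_i)$ so that these defects are dominated by $2^{-i}$ makes the subsystem summably soft, and by Corollary~\ref{cor:coi limits} its limit is still $\cong\S$. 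Finally, the telescoped maps $j_{n,n-1}\circ\cdots\circ j_{m+1,m}$ of Remark~\ref{rem:summable to coherent} form a \emph{strict} inductive system of the same matrix algebras $M_{\nu_{n_i}}$ with $\norm{\oj nm-\j nm}\to0$, hence with the same limit space, giving the desired strict system with limit $\S$.

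\emph{Main obstacle.} The routine part is the identification $\S_\oo\cong\S$, which is essentially formal once $\Phi$ and $\Sigma$ are in hand. The delicate step is the separable strictification: converting the soft system into a genuinely coherent one \emph{without} altering the limit. This rests on exploiting finite-dimensionality of the $\S_n$ to promote the pointwise convergence $\varphi_m\psi_m\to\id_\S$ to uniform convergence on the relevant compact sets, so that a single summable error sequence can be extracted along a subsequence, after which Remark~\ref{rem:summable to coherent} produces the strict system.
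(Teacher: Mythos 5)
Your proof is correct and takes essentially the same route as the paper's: you form the induced split soft system $\j nm=\psi_n\circ\varphi_m$, identify $\S_\oo\cong\S$ via $a\mapsto\jlim_n\psi_n(a)$ with inverse $\jlim_n a_n\mapsto\lim_n\varphi_n(a_n)$, and in the separable case extract a summably soft subsequence by exactly the compactness argument the paper cites (finite-dimensional domains make $\varphi_k$ of the unit ball compact, upgrading pointwise to uniform convergence) before telescoping as in Remark~\ref{rem:summable to coherent}. You merely make explicit some steps the paper leaves implicit (the universal-property construction of $\Phi$, the mutual-inverse verification on the dense set $\bigcup_n\j\oo n(\S_n)$, and the diagonal choice of the subsequence), which is fine.
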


\begin{proof}
   Given a net of completely positive contractions $\varphi_n:\S\to M_{\nu_n}$ and $\psi_n:M_{\nu_n}\to\S$ such that $\varphi_n\circ\psi_n$ converges point norm to $\id_\S$, we form the induced soft inductive system 
   $(\S,j)$ as above. The limit space $\S_\oo$ is  completely order isomorphic  with $\S$ via $a \mapsto \jlim_n \psi_n(a)$ (the inverse is $\jlim_n a_n \mapsto \lim_n \varphi_n(a_n)$).
    With this identification of $\S$ and $\S_\oo$ we get $\j\oo n=\varphi_n$. 

  When $\S$ is separable, 
   the index set can be chosen to be $\mathbb{N}$ by sequentially choosing better and better cpc approximations for a nested sequence of finite subsets of $\S$ whose union is dense in $\S$. One can then use a compactness argument (such as in \cite[Remark 3.2(ii)]{CW1}), to further refine this resulting sequence $\S\xrightarrow{\psi_n}M_{\nu_n}\xrightarrow{\varphi_n}\S$ of cpc approximations to one satisfying 
   \[\|\varphi_m-\varphi_n\circ \psi_n\circ\varphi_m\|<\frac{1}{2^n}, \text{ for all } n>m\geq 0.\]   (This is called a \emph{summable} system of cpc approximations in \cite{CW1}.) 
   Summability of the cpc approximations 
   implies summability of the induced soft inductive system $j_{nm}=\psi_n\circ\varphi_m:M_{\nu_m}\to M_{\nu_n}$. And so, as in Remark~\ref{rem:summable to coherent}, we can form a strict system $\oj{nm}:=\psi_n\circ\varphi_{n-1}\circ\hdots \circ \psi_{m+1}\circ\varphi_m:M_{\nu_m}\to M_{\nu_n}$ with the same limit.
\end{proof}
By \cite[Remark 5.15]{kavruk2011tensor}, a unital $\Cstar$-algebra is nuclear as an operator system if and only if it is nuclear as a $\Cstar$-algebra. Hence, we have the following. 
\begin{corollary}
    Every unital nuclear $\Cstar$-algebra $\A$ is a soft inductive limit of matrix algebras in the category of operator systems.
    This inductive system can be chosen to be split. 
    If $\A$ is separable, the directed set can be chosen to be $N=\NN$.\footnote{In this case, the complete order isomorphism in the proof above is essentially the map $\Psi$ in \cite{CW1} and \cite{C23}.}
\end{corollary}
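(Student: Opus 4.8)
The final statement is the corollary that every unital nuclear $\Cstar$-algebra $\A$ is a soft inductive limit of matrix algebras in the operator system category, with the system choosable as split, and with index set $\NN$ in the separable case. The plan is to reduce this immediately to Theorem~\ref{thm:nuclearity_iff} by invoking the cited result of Kavruk--Paulsen--Todorov--Tomforde that a unital $\Cstar$-algebra is nuclear as a $\Cstar$-algebra precisely when it is nuclear as an operator system. Thus the task is purely a matter of checking that the hypotheses of Theorem~\ref{thm:nuclearity_iff} are met and then quoting it.

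\textbf{Key steps, in order.}
First I would observe that $\A$, being a unital nuclear $\Cstar$-algebra, satisfies the Choi--Effros/Kirchberg completely positive approximation property of Theorem~\ref{thm:CEK}: there is a net of cpc maps $\A\xrightarrow{\psi_n}M_{\nu_n}\xrightarrow{\varphi_n}\A$ with $\varphi_n\circ\psi_n\to\id_\A$ point-norm. Second, by \cite[Remark 5.15]{kavruk2011tensor} this is exactly what it means for $\A$ to be nuclear \emph{as an operator system}, so $\A$ qualifies as a nuclear operator system in the sense used in Theorem~\ref{thm:nuclearity_iff}. Third, I apply Theorem~\ref{thm:nuclearity_iff} verbatim: it produces a soft inductive system $(\S,j)$ of matrix algebras whose limit space $\S_\oo$ is completely order isomorphic to $\A$, it guarantees the system can be taken split, and it yields index set $N=\NN$ (with the system even strict) whenever $\A$ is separable. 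Each of these conclusions transfers directly to $\A$, giving the corollary.

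\textbf{The main obstacle.}
There is essentially no technical obstacle remaining: the entire content is delegated to Theorem~\ref{thm:nuclearity_iff}, and the only conceptual step is the identification of the two notions of nuclearity for a unital $\Cstar$-algebra, which is already established in the literature and cited. The one point warranting a sentence of care is that Theorem~\ref{thm:nuclearity_iff} constructs the limit in the operator system category, so the complete order isomorphism $\A\cong\S_\oo$ is an isomorphism \emph{of operator systems}, not a priori of $\Cstar$-algebras; but this is precisely the conclusion the corollary asserts (a soft inductive limit of matrix algebras \emph{in the category of operator systems}), so no further argument is needed. The footnote identifying the resulting complete order isomorphism with the map $\Psi$ of \cite{CW1,C23} is a cross-reference remark rather than part of the proof.
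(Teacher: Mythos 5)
Your proposal is correct and matches the paper's own argument exactly: the paper proves this corollary in a single sentence by citing \cite[Remark 5.15]{kavruk2011tensor} to identify $\Cstar$-algebraic nuclearity with operator system nuclearity and then invoking Theorem~\ref{thm:nuclearity_iff}, which is precisely your reduction. Your additional remark that the isomorphism is only asserted in the operator system category is a fair point of care, though the paper does not need to spell it out.
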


When the inductive limits are split, we immediately get a converse.

\begin{proposition}\label{prop:split nuc}
    Split inductive limits of nuclear operator systems are nuclear.
    In particular, split inductive limits of nuclear $\Cstar$-algebras are nuclear. 
\end{proposition}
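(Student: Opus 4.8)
The plan is to manufacture a completely positive approximation of $\id_{\S_\oo}$ by sandwiching the nuclear approximations of the individual $\S_n$ between the splitting maps and the canonical maps $\j\oo n$. First I would unpack the splitting data. In the operator system category, "split" must mean that the right-inverse $s\d:\S_\oo\to\C(\S\d,j)$ of $\jlim$ is a completely positive contraction (not merely a linear contraction); this is the hypothesis actually produced in the proof of \cref{thm:nuclearity_iff}, where the splitting is $\varphi\d$. Writing $s_n:=\pi_n\circ s\d:\S_\oo\to\S_n$ for $\pi_n:\nets(\S\d)\to\S_n$ the (ucp) coordinate projection, each $s_n$ is cpc, and the defining property of the splitting gives $\jlim_n s_n(a)=a$ for every $a\in\S_\oo$. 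Since $s\d(a)=(s_n(a))_n$ is a \jt-convergent net, the Cauchy-net property of \jt-convergent nets recorded after \eqref{eq:j maps} yields $\j\oo n(s_n(a))\to\jlim_n s_n(a)=a$ in norm for each fixed $a$. Equivalently, the net of cpc maps $\j\oo n\circ s_n:\S_\oo\to\S_\oo$ converges to $\id_{\S_\oo}$ in the point-norm topology.

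Next I would insert the nuclear approximations of the $\S_n$. Using the finite-set/tolerance reformulation of nuclearity (valid for nets, by indexing the approximating net over pairs $(F,\eps)$), it suffices, given a finite $F\subset\S_\oo$ and $\eps>0$, to exhibit cpc maps $\Phi:\S_\oo\to M_\mu$ and $\Psi:M_\mu\to\S_\oo$ with $\norm{\Psi\Phi(a)-a}<\eps$ for all $a\in F$. By the first paragraph, choose $n$ with $\norm{\j\oo n(s_n(a))-a}<\eps/2$ for all $a\in F$. Since $\S_n$ is nuclear and $\{s_n(a):a\in F\}\subset\S_n$ is finite, choose cpc maps $\varphi:\S_n\to M_\mu$ and $\psi:M_\mu\to\S_n$ with $\norm{\psi\varphi(s_n(a))-s_n(a)}<\eps/2$ for all $a\in F$. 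Setting $\Phi:=\varphi\circ s_n$ and $\Psi:=\j\oo n\circ\psi$ (both cpc as composites of cpc maps) and using that $\j\oo n$ is contractive gives
\[
\norm{\Psi\Phi(a)-a}\le\norm{\j\oo n\big(\psi\varphi(s_n(a))-s_n(a)\big)}+\norm{\j\oo n(s_n(a))-a}<\eps
\]
for all $a\in F$. This establishes the completely positive approximation property, so $\S_\oo$ is nuclear.

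For the $\Cstar$-algebra statement I would simply invoke that a nuclear $\Cstar$-algebra is nuclear as an operator system (\cite[Remark 5.15]{kavruk2011tensor}); hence a split inductive limit of nuclear $\Cstar$-algebras is a split inductive limit of nuclear operator systems and is therefore nuclear by the first part. (If the system is moreover a soft inductive system of $\Cstar$-algebras, its limit is a $\Cstar$-algebra, and operator-system nuclearity coincides with $\Cstar$-nuclearity there.)

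I expect the only genuine subtlety to be the step combining the two limiting processes, namely the convergence of $\j\oo n\circ s_n$ over the net index $n$ with the internal nuclear approximation of each $\S_n$ over its own approximation index. Routing everything through the finite-set/tolerance characterization avoids any diagonal-net bookkeeping and keeps all intermediate maps honestly completely positive and contractive. The one point worth flagging explicitly is that complete positivity of $s\d$ (and hence of $s_n$, $\Phi$, $\Psi$) is essential and is precisely what the operator-system notion of a \emph{split} system supplies beyond the normed-space definition.
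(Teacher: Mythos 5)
Your proposal is correct and is essentially the paper's own proof: the paper likewise fixes a finite set $a^{(1)},\ldots,a^{(k)}\in\S_\oo$ and $\eps>0$, chooses $m$ with $\norm{a^{(i)}-\j\oo m s_m(a^{(i)})}$ small, picks a nuclear approximation $\alpha:\S_m\to M_t$, $\beta:M_t\to\S_m$, and sandwiches it as $\gamma=\alpha\circ s_m$, $\theta=\j\oo m\circ\beta$, exactly your $\Phi$ and $\Psi$ up to an $\eps/3$-versus-$\eps/2$ bookkeeping difference. Your explicit flagging that the splitting $s\d$ must be completely positive in the operator system category (as it is for the induced split systems of Theorem~\ref{thm:nuclearity_iff}) is a point the paper leaves implicit, and your treatment of the $\Cstar$-case via \cite[Remark 5.15]{kavruk2011tensor} matches the paper's.
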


\begin{proof}
    Let $(\S,j,s)$ be a split inductive system.
    Given $a\up 1,\ldots,a\up k\in\S_\oo$ and $\eps>0$ set $a\up i _n = s_n(a\up i)$.
    Pick $m$ so that $\norm{a\up i - \j\oo m a\up i_m}<\eps /3$ for all $i$.
    Now use the nuclearity of $\S_m$ to pick $\alpha:\S_m\to M_t$ and $\beta:M_t\to\S_m$, $t\in\NN$, so that  $\norm{a_m\up i - \beta\circ\alpha(a_m\up i)}<\eps /3$ for all $i$.
    Now set $\gamma = \alpha\circ s_m$ and $\theta = \j\oo m \circ\beta$ and use the triangle inequality to check that $\norm{a\up i -\theta\circ\gamma(a\up i)}<\eps$ for all $i$.
\end{proof}

Also in the case of separable operator systems and strict inductive system, we have a converse to Theorem~\ref{thm:nuclearity_iff}. 

\begin{theorem}\label{thm: strict iff}
    Let $\S$ be a separable operator system. Then $\S$ is nuclear if and only if it is the strict inductive limit of matrix algebras, i.e., there is a strict inductive sequence $(\S,j)$ with each $\S_n$ being a matrix algebra so that $\S$ is its limit space. 
\end{theorem}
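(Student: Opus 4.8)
The forward implication is already contained in Theorem~\ref{thm:nuclearity_iff}: a separable nuclear operator system admits, by that result, a \emph{strict} inductive sequence of matrix algebras whose limit space is completely order isomorphic to $\S$. The plan is therefore to prove the converse: if $\S$ is the strict inductive limit of a sequence $(\S_n,j)$ with each $\S_n=M_{\nu_n}$ a matrix algebra, then $\S$ is nuclear. By Kirchberg's definition of nuclearity for operator systems (equivalently $(\min,\max)$-nuclearity via Han--Paulsen), it suffices to exhibit cpc maps $\varphi_n:\S\to M_{\nu_n}$ and $\psi_n:M_{\nu_n}\to\S$ with $\psi_n\circ\varphi_n\to\id_\S$ in the point-norm topology.

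The natural candidates for the upward maps are the induced ucp maps $\psi_n:=j_{\oo n}\colon M_{\nu_n}\to\S$, whose ranges have dense union in $\S=\overline{\bigcup_n j_{\oo n}(\S_n)}$. Strictness passes to the limit as the coherence relation $j_{\oo n}\circ j_{nm}=j_{\oo m}$ for $n>m$ (compute the $\jt$-limits of the basic nets and use $j_{kn}j_{nm}=j_{km}$). It remains to build downward cpc maps $\varphi_n\colon\S\to M_{\nu_n}$ with $\psi_n\circ\varphi_n\to\id_\S$. The mechanism is transparent when each $j_{\oo n}$ is a complete order embedding: then $j_{\oo n}^{-1}$ is ucp on the range $j_{\oo n}(M_{\nu_n})$, and since $M_{\nu_n}$ is a finite-dimensional, hence injective, operator system it extends to a ucp map $\varphi_n:\S\to M_{\nu_n}$ with $\varphi_n\circ j_{\oo n}=\id_{M_{\nu_n}}$. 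Coherence then gives, for $m\le n$, $\psi_n\varphi_n\psi_m=\psi_n\varphi_n\psi_n j_{nm}=\psi_n j_{nm}=\psi_m$, so $\psi_n\circ\varphi_n$ fixes $j_{\oo m}(M_{\nu_m})$ for every $m\le n$; as these ranges increase to a dense subset and $\norm{\psi_n\varphi_n}\le1$, it follows that $\psi_n\circ\varphi_n\to\id_\S$ pointwise in norm, yielding the CPAP.

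The main obstacle is that the induced maps $j_{\oo n}$ of an arbitrary strict inductive sequence of matrix algebras need \emph{not} be complete order embeddings, so the downward maps cannot be produced merely by inverting and extending. This is precisely the difficulty that the argument of Ding and Peterson is designed to circumvent: rather than an honest left inverse, one constructs \emph{approximate} downward maps directly. Concretely, I would bitranspose the coherent family $j_{\oo n}$ to a weak-$*$-dense family of normal ucp maps $M_{\nu_n}\to\S^{**}$ (using that norm-density in $\S$ yields weak-$*$-density in $\S^{**}$, and that the bitranspose of a ucp map into a finite-dimensional, hence reflexive, operator system is normal ucp), and then exploit injectivity and finite-dimensionality of the $M_{\nu_n}$ together with a weak-$*$ compactness argument to extract cpc maps $\S\to M_{\nu_n}$ that approximately invert the $j_{\oo n}$ on the relevant finite-dimensional subspaces; the standard Hahn--Banach convexity trick then upgrades point-weak-$*$ approximation of the identity to point-norm approximation, since the cpc maps into a fixed $M_{\nu_n}$ form a convex set. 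The resulting two-sided CPAP witnesses nuclearity of $\S$. I expect the technical heart of the argument to be exactly this step -- securing the approximate downward maps in the non-injective, purely order-theoretic setting and arranging them to be compatible across $n$ -- which is where the faithful adaptation of Ding and Peterson's construction is required.
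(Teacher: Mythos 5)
Your forward direction is exactly the paper's (Theorem~\ref{thm:nuclearity_iff}), and your reduction of the converse to producing downward cpc maps $\varphi_n:\S\to M_{\nu_n}$ with $j_{\oo n}\circ\varphi_n\to\id_\S$ is a legitimate strategy in principle, but the step where you actually produce these maps is a genuine gap. You correctly observe that the $j_{\oo n}$ need not be complete order embeddings, so there is nothing to invert and extend; your replacement --- ``exploit injectivity and finite-dimensionality of the $M_{\nu_n}$ together with a weak-$*$ compactness argument to extract cpc maps $\S\to M_{\nu_n}$ that approximately invert the $j_{\oo n}$'' --- asserts precisely the conclusion you need without supplying a mechanism. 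Injectivity of $M_{\nu_n}$ lets you extend maps already defined on operator subsystems, and weak-$*$ compactness lets you take cluster points of candidate maps, but neither produces a candidate: the statement that $\id_\S$ lies in the point-weak closure of maps factoring cpc through matrix algebras is (after your Hahn--Banach convexity upgrade, which is the standard and unproblematic part) equivalent to the CPAP for $\S$, i.e., to the nuclearity you are trying to prove. As written, the technical heart of your argument is circular. Note also that the natural candidates fail: at the level of $\C(\S\d,j)\subset\prod_n M_{\nu_n}$ one has the coordinate compressions onto $\bigoplus_{j\le m}M_{\nu_j}$, but these do not vanish on $\C_0(\S\d)$ and hence do not descend to $\S$.

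The paper closes this gap by a different route. It identifies $\S$ with the operator system quotient $\C(\S\d,j)/\C_0(\S\d)$ (Proposition~\ref{prop: limit as quotient}); it proves that $\C(\S\d,j)$, the system of $\jt$-convergent sequences inside $\prod_n M_{k_n}$, is nuclear by an \emph{explicit} CPAP --- compress to the first $m$ coordinates and extend back using the connecting maps, which fixes the dense set of eventually coherent sequences exactly because the system is strict (Lemma~\ref{lem: preimage is nuc}); and it then shows that the quotient of a separable nuclear operator system by a $\Cstar$-ideal it contains is again nuclear (Lemma~\ref{lem: nuc quotient}), not via an explicit CPAP but abstractly: by Choi--Effros/Effros--Ozawa--Ruan/Kirchberg, $\S$ is nuclear iff $\S^{**}$ is an injective von Neumann algebra; the central support projection $p$ of the ideal $\J$ splits $\S^{**}\cong p\S^{**}\oplus p^\perp\S^{**}$ with $p^\perp\S^{**}\cong(\S/\J)^{**}$, compression by $p^\perp$ is a conditional expectation, so injectivity passes to $(\S/\J)^{**}$ and nuclearity of the quotient follows. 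This double-dual detour is precisely what substitutes for the approximate downward maps you hoped to extract by compactness; to rescue your direct approach you would need to prove something of comparable strength (this is the actual content of the Ding--Peterson argument the paper adapts from \cite[Theorem 4.6]{CP23}), rather than appeal to injectivity and weak-$*$ compactness alone.
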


 An argument for the converse is essentially contained in the proof of \cite[Theorem 4.6]{CP23}, which pushes some classic results for $C^*$-algebras to the operator system setting. We generalize and flesh out the arguments here in a series of lemmas:

\begin{lemma}
       Let $\A$ be a $C^*$-algebra with two-sided closed ideal $\J$, and let $\S\subset \A$ be an operator system which contains $\J$. Then $(\S/\J)^{**}$ is completely order isomorphic to $p^\perp \S^{**}$ where $p$ is the central support projection of $\J$. (In fact, we have $\S^{**}\cong \J^{**}\oplus (\S/\J)^{**}$.)
\end{lemma}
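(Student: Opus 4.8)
The plan is to reduce everything to a statement about von Neumann algebras by passing to the bidual and exploiting the fact that biduals of operator systems (and of $\Cstar$-algebras) are themselves very well-behaved. First I would recall that for a $\Cstar$-algebra $\A$ with closed two-sided ideal $\J$, the bidual $\A^{**}$ is a von Neumann algebra and $\J^{**}$ sits inside it as a weak-$*$-closed two-sided ideal, hence of the form $p\A^{**}$ for a central projection $p\in Z(\A^{**})$ (the \emph{central support projection} of $\J$). The quotient $(\A/\J)^{**}$ is then completely order isomorphic (in fact $*$-isomorphic) to $p^\perp \A^{**} = (1-p)\A^{**}$, giving the direct sum decomposition $\A^{**}\cong \J^{**}\oplus (\A/\J)^{**}$ at the level of $\Cstar$-algebras. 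This is the classical fact I would invoke as known; the work is to transfer it to the operator subsystem $\S$ with $\J\subset\S\subset\A$.

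The key steps I would carry out are as follows. Since taking biduals is an exact functor on Banach spaces and $\S\subset\A$ is a closed subspace, the inclusion induces a weak-$*$-homeomorphic complete order embedding $\S^{**}\hookrightarrow\A^{**}$ whose image is the weak-$*$-closure of $\S$ in $\A^{**}$; because $\S$ is an operator system this embedding is unital and completely order preserving, so I may simply regard $\S^{**}$ as a weak-$*$-closed operator subsystem of $\A^{**}$. Next, because $\J\subset\S$, the ideal $\J^{**}=p\A^{**}$ is contained in $\S^{**}$, and multiplication by the central projection $p$ makes sense inside $\A^{**}$ and restricts to a (completely positive, weak-$*$-continuous) map on $\S^{**}$. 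The central projection $p$ commutes with everything in $\A^{**}$, so $\S^{**}$ splits along $p$: I would show $\S^{**} = p\S^{**}\oplus p^\perp\S^{**}$ as a complete order direct sum, with $p\S^{**}=p\A^{**}=\J^{**}$ (the last equality because $\J^{**}$ already fills out all of $p\A^{**}$ and lies in $\S^{**}$). Finally, I would identify $p^\perp\S^{**}$ with $(\S/\J)^{**}$: the quotient map $q:\S\to\S/\J$ dualizes to $q^{**}:\S^{**}\to(\S/\J)^{**}$, whose kernel is exactly the weak-$*$-closure of $\J$ in $\S^{**}$, namely $p\S^{**}=\J^{**}$, so $q^{**}$ restricts to a complete order isomorphism $p^\perp\S^{**}\xrightarrow{\ \cong\ }(\S/\J)^{**}$. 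Here I would use that $\J$ is a kernel (so that the operator system quotient $\S/\J$ is well-defined in the sense of the earlier Proposition~\ref{prop:upropunital}) and that the operator system quotient is compatible with the $\Cstar$-algebra quotient $\A/\J$ via Proposition~\ref{prop:Kav+5.12}, which guarantees the order structures match under bidualization.

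The main obstacle I anticipate is the matching of \emph{operator system} (matrix order) structures rather than merely the Banach space or order structures: the decomposition $\S^{**}=p\S^{**}\oplus p^\perp\S^{**}$ is a direct sum of operator systems only if the two summands carry the matrix order inherited from $\A^{**}$, and one must verify that a self-adjoint element $x\in M_\nu(\S^{**})$ is positive if and only if both $(p\otimes 1_\nu)x$ and $(p^\perp\otimes 1_\nu)x$ are positive. This follows from centrality of $p$ (so $p\otimes 1_\nu$ is a central projection in $M_\nu(\A^{**})$ and the two corners are orthogonal complementary), but it requires care to see that complete order isomorphism, not just order isomorphism, is preserved when passing to the quotient $(\S/\J)^{**}$; this is precisely where the complete-order-embedding content of Proposition~\ref{prop:Kav+5.12} and the exactness of bidualization for complete order structures must be combined. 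A secondary subtlety is ensuring the weak-$*$ topologies line up so that $\J^{**}$, computed abstractly, genuinely coincides with the weak-$*$-closure $p\A^{**}\cap\S^{**}$ inside $\A^{**}$; I would handle this by using normality of the second-dual inclusions and the standard identification of $\J^{**}$ with the weak-$*$-closure of $\J$.
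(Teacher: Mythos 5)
Your proposal is correct, and it reaches the same decomposition as the paper -- the central support projection $p$ of $\J$ in $\A^{**}$, the splitting $\S^{**}=p\S^{**}\oplus p^\perp\S^{**}$ with $p\S^{**}=\J^{**}$ (your observation that $p\,x\in p\A^{**}=\J^{**}\subset\S^{**}$ for $x\in\S^{**}$ is exactly the right shortcut) -- but it executes the final identification differently. The paper works concretely: it realizes $\A^{**}$ as $\sigma_u(\A)''$ in the universal representation, proves $(\A/\J)^{**}\cong p^\perp\sigma_u(\A)''$ by lifting GNS representations of $\A/\J$ along the quotient map, and then transfers the identification from $p^\perp\S=\pi(\S/\J)$ to the biduals by weak-$*$ density of $\S$ in $\S^{**}$ and weak-$*$ continuity of multiplication by the central projection. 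You instead take the classical $\Cstar$-fact $\A^{**}\cong\J^{**}\oplus(\A/\J)^{**}$ as given and argue functorially at the operator-system level: bidualize the quotient map $q:\S\to\S/\J$, identify $\ker q^{**}=\J^{\perp\perp}=p\S^{**}$, and restrict $q^{**}$ to the corner $p^\perp\S^{**}$. This is a genuine gain in transparency: it avoids the spatial argument entirely, and it isolates the one truly operator-system-theoretic point -- that the inverse of $q^{**}\restrictedto_{p^\perp\S^{**}}$ is completely positive -- which you correctly propose to settle by composing with the bidual of the complete order embedding $\S/\J\hookrightarrow\A/\J$ furnished by Proposition~\ref{prop:Kav+5.12} (whose hypothesis holds here since any approximate unit $e_\alpha\in\J$ satisfies $e_\alpha s\in\J\subset\S$ for $s\in\S$) and observing that the composite $p^\perp\S^{**}\to(\S/\J)^{**}\hookrightarrow(\A/\J)^{**}\cong p^\perp\A^{**}$ is just the inclusion, hence a complete order embedding. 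Notably, the paper uses this quotient compatibility only implicitly when it writes $\pi(\S/\J)=p^\perp\S$, so your explicit appeal to Proposition~\ref{prop:Kav+5.12} actually tightens a step the paper glosses over; conversely, the paper's concrete route is self-contained where yours outsources the $\Cstar$-level statement to the classical literature. The only points you should still write out are the surjectivity of $q^{**}$ (standard, since $q$ is a metric surjection, using complete biproximinality from Proposition~\ref{prop:Kav+5.12}) and the weak-$*$ closedness of $p^\perp\S^{**}$ (Krein--\v{S}mulian applied to the weak-$*$ continuous idempotent $x\mapsto p^\perp x$); neither is an obstacle.
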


\begin{proof}

For each $\varphi\in S(\A)$, write $(\sigma_\varphi, H_\varphi, \xi_\varphi)$ for the associated cyclic GNS representation. Write $H_\A:=\bigoplus_{\varphi\in S(\A)} H_\varphi$ and let 
$\sigma_u:\A\to B(H_\A)$ be the universal representation for $\A$. Since $\J$ is an ideal in $\A$, its essential support $\overline{\sigma_u(\J)H_\A}$  is invariant under $\sigma_u(\A)$, and so its orthogonal projection $p$ (which is the SOT limit of an increasing approximate identity in $\J$) lies in $\sigma_u(\A)'\cap \sigma_u(\A)''$. Hence we have 
\[\sigma_u(\A)''=p\sigma_u(\A)''\oplus p^\perp \sigma_u(\A)''.\]
Note that the orthogonal complement of $\overline{\sigma_u(\J)H_\A}$ is $\bigoplus_{\J\subset \text{ker}(\sigma_\varphi), \varphi\in S(\A)} H_\varphi$ (i.e., the largest subspace on which $\sigma_u(x)=0$ for all $x\in \J$).

We can identify $\A/\J$ with $p^\perp \sigma_u(\A)$ via the map $\pi:\A/\J\to B(p^\perp H_\A)$ given by $\pi(a+\J)=\oplus_{\J\subset \text{ker}(\sigma_\varphi), \varphi\in S(\A)}$. Since every (GNS) representation of $\A/\J$ extends to a (GNS) representation of $\A$ which vanishes on $\J$ (by composing with the quotient map), we can identify $\pi(\A/\J)$ with the image of $\A/\J$ under its universal representaion. In particular, that means we can identify $(\A/\J)^{**}\cong \pi(\A/\J)''=p^\perp \sigma_u(\A)''$. We can also write $p\sigma_u(\A)''\cong \J^{**}$ (though that is not important here), and, by identifying $\sigma_u(\A)''\cong \A^{**}$, we have 
\[\A^{**}\cong p\A^{**}\oplus p^\perp \A^{**}\cong \J^{**}\oplus (\A/\J)^{**}.\]

Since $\S\subset \A$, we have $p^\perp \S^{**}\subset p^\perp \A^{**}$. We just need to argue that $p^\perp \S^{**}$ is mapped to $(\S/\J)^{**}$ under the identification $p^\perp \A^{**}\cong (\A/\J)^{**}$. Since $p^\perp \S=\pi(\S/\J)$, we know that $p^\perp \S$ gets sent to $\S/\J$ in this identification. Since $\S$ and $\S/\J$ are ultraweakly (wk$^*$) dense in $\S^{**}$ and $(\S/\J)^{**}$, respectively, 
and since multiplication by a central projection and the above identification $p^\perp \A^{**}\cong (\A/\J)^{**}$ are ultraweakly (wk$^*$) continuous, it follows that 
$p^\perp \S^{**}$ gets sent to $(\S/\J)^{**}$ under the identification $p^\perp \A^{**}\cong (\A/\J)^{**}$. Since this identification is a $^*$-isomorphism, its restriction and co-restriction gives a complete order isomorphism between $(\S/\J)^{**}$ and  $p^\perp \S^{**}$. 

\end{proof}

\begin{lemma}\label{lem: nuc quotient}
    Let $\A$ be a $C^*$-algebra with two-sided closed ideal $\J$, and let $\S\subset \A$ be a separable nuclear operator system which contains $\J$. Then $\S/\J$ is nuclear. 
\end{lemma}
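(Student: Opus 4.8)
The plan is to pass to second duals and reduce nuclearity of $\S/\J$ to injectivity of its bidual, exploiting the decomposition supplied by the preceding lemma. The main external input I would invoke is the operator-system analogue of the Choi--Effros--Lance/Connes circle of results: a separable operator system $\mc R$ is nuclear if and only if $\mc R^{**}$ is an injective operator system. The direction "$\mc R$ nuclear $\Rightarrow \mc R^{**}$ injective" is the soft one and can be seen directly by taking second adjoints of a completely positive approximation $\varphi_n\colon\mc R\to M_{k_n}$, $\psi_n\colon M_{k_n}\to\mc R$, so that $\psi_n^{**}\circ\varphi_n^{**}\to\id_{\mc R^{**}}$ in the point-weak$^*$ topology; after embedding $\mc R^{**}$ normally into some $B(H)$ (as a weak$^*$-closed subspace), one Arveson-extends each $\varphi_n^{**}$ to $\widetilde{\varphi_n}\colon B(H)\to M_{k_n}$ and extracts a point-weak$^*$ cluster point of $\psi_n^{**}\circ\widetilde{\varphi_n}$, which is a ucp projection of $B(H)$ onto $\mc R^{**}$.

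Applying this easy direction to the hypothesis gives that $\S^{**}$ is injective. By the preceding lemma I have a complete order identification $(\S/\J)^{**}\cong p^\perp\S^{**}$ together with the internal splitting $\S^{**}\cong p\S^{**}\oplus p^\perp\S^{**}$, where $p$ is the central support projection of $\J$ in $\A^{**}$. I would then observe that $p^\perp\S^{**}$ is a ucp retract of $\S^{**}$: the compression $\pi\colon\S^{**}\to p^\perp\S^{**}$, $x\mapsto p^\perp x$, is ucp (compression by a central projection), the inclusion $\iota\colon p^\perp\S^{**}\hookrightarrow\S^{**}$ is a complete order embedding onto a direct summand, and $\pi\circ\iota=\id$. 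Since retracts of injective operator systems are injective — given $\mathcal E\subset\mathcal F$ and a ucp $\phi\colon\mathcal E\to p^\perp\S^{**}$, extend $\iota\circ\phi$ along $\mathcal E\subset\mathcal F$ using injectivity of $\S^{**}$ and post-compose with $\pi$ — it follows that $(\S/\J)^{**}\cong p^\perp\S^{**}$ is injective.

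Finally I would invoke the hard direction of the equivalence: since $\S/\J$ is separable (being a quotient of the separable $\S$) and $(\S/\J)^{**}$ is injective, $\S/\J$ is nuclear. The main obstacle is precisely this hard direction, "injective bidual $\Rightarrow$ nuclear" for operator systems, which is where the genuine content resides (the analogue of Connes' theorem that injectivity implies semidiscreteness, combined with a Choi--Effros--Lance point-norm approximation argument); I would cite this rather than reprove it. A secondary point to verify carefully is that the identification $(\S/\J)^{**}\cong p^\perp\S^{**}$ from the preceding lemma genuinely realizes $(\S/\J)^{**}$ as a central direct summand of $\S^{**}$, so that compression by $p^\perp$ lands inside $\S^{**}$ and the retraction argument is legitimate.
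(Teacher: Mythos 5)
Your proposal is correct and follows essentially the same route as the paper: both reduce nuclearity of $\S/\J$ to injectivity of $(\S/\J)^{**}\cong p^\perp\S^{**}$ via the characterization that an operator system is nuclear if and only if its bidual is injective (Choi--Effros, Effros--Ozawa--Ruan, Kirchberg), with your ucp-retract argument being the same mechanism as the paper's conditional expectation $x\mapsto p^\perp x$ onto the central summand. The only cosmetic difference is that the paper unpacks the hard direction you cite as a black box, passing from ``$(\S/\J)^{**}$ injective'' through Choi--Effros (injective $\Rightarrow$ completely order isomorphic to a $\Cstar$-algebra) and Sakai's theorem (a dual $\Cstar$-algebra is a von Neumann algebra) to Kirchberg's \cite[Lemma 2.8 (ii)]{Kir95}; and the point you flag for verification---that compression by $p^\perp$ lands in $\S^{**}$---is exactly what the hypothesis $\J\subset\S$ secures via the preceding lemma's internal splitting $\S^{**}\cong\J^{**}\oplus(\S/\J)^{**}$.
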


\begin{proof}
    It follows from \cite{CE77,EOR01,Kir95} that an operator system is nuclear if and only if its double dual is completely order isomorphic to an injective von Neumann algebra. (See \cite[Theorem 3.5]{han2011approximation} for a proof.) Then $\S^{**}$ is injective. Letting $p$ denote the central support projection of $\J$, we have $\S^{**}\cong p\S^{**}\oplus p^\perp \S^{**}$, and so compression by $p^\perp$ is a conditional expectation onto $p^\perp \S^{**}$. Then $p^\perp \S^{**}$ is also injective, and hence so is $(\S/\J)^{**}$. Since it is injective, we know from \cite{CE77} that it is completely order isomorphic to a $C^*$-algebra, which is moreover a double dual and thus a von Neumann algebra by Sakai's theorem \cite[Thm.~1.16.7]{sakai_c-algebras_1998}. 
    Hence $(\S/\J)^{**}$ is completely order isomorphic to a von Neumann algebra, which (by \cite[Lemma 2.8 (ii)]{Kir95}) implies that $\S/\J$ is a nuclear operator system. 
\end{proof}

\begin{remark}
    This is not true for general quotients of operator systems. In \cite[Section 2]{FP12}, Farenick and Paulsen consider the operator system $M_n/\J_n$ for $n>2$ where \[\J_n=\{d\in M_n\mid d\text{ diagonal and } \text{tr}_n(D)=0\}.\] They show (\cite[Theorem 2.6]{FP12}) this is completely order isomorphic to the operator system 
    \[W_n=\text{span}\{w_iw_j^*\mid 1\leq i,j\leq n\}\subset C^*(\mathbb{F}_{n-1})\] where $w_2,...,w_n$ are the unitaries in $C^*(\mathbb{F}_{n-1})$ corresponding to the generators of $\mathbb{F}_{n-1}$ and $w_1=1$. Moreover, in the same theorem, they show that the enveloping $C^*$-algebra $C^*_e(W_n)$ is $C^*(\mathbb{F}_{n-1})$. Since $W_n\subset C^*(\mathbb{F}_{n-1})$ has enough unitaries (meaning $\mathcal{U}(C^*(\mathbb{F}_{n-1}))\cap W_n$ generates $C^*(\mathbb{F}_{n-1})$ as a $C^*$-algebra) and since $C^*(\mathbb{F}_{n-1})$ is non-exact, it follows from \cite[Corollary 9.6]{kavruk2013quotients}, that $W_n$ is not (min,el)-nuclear. By the hierarchy of operator system tensor products, this implies that $W_n$ is also not (max,min)-nuclear, which we here call nuclear.  
\end{remark}

\begin{lemma}\label{lem: preimage is nuc}
Let $(\S,j)$ be a strict inductive sequence such that $\S_n=M_{k_n}$ for each $n\in \mathbb{N}$. Then $\C(\S\d,j)$ is a nuclear operator subsystem of $\nets(\S\d)=\prod_n M_{k_n}$.
\end{lemma}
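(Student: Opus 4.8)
The plan is to prove nuclearity of $\C(\S\d,j)$ directly, by constructing a completely positive approximation through finite-dimensional $C^*$-algebras in which strictness is used in an essential way. Recall that, as an operator subsystem of $\nets(\S\d)=\prod_n M_{k_n}$, the space $\C(\S\d,j)$ carries the supremum norm $\norm{a\d}_\nets=\sup_n\norm{a_n}$ as its order norm. For each $N\in\NN$ I would introduce the cut-off map $\psi_N\colon \C(\S\d,j)\to\bigoplus_{i=1}^N M_{k_i}$, $a\d\mapsto(a_1,\dots,a_N)$, which is simply the restriction of the coordinate projection $\nets(\S\d)\to\bigoplus_{i=1}^N M_{k_i}$ and hence a unital $^*$-homomorphism, in particular ucp. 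To return to $\C(\S\d,j)$ I would define $\varphi_N\colon\bigoplus_{i=1}^N M_{k_i}\to\C(\S\d,j)$ by using the last block to generate a basic tail, namely
\[
  \varphi_N(b_1,\dots,b_N)=\big(b_1,\dots,b_N,\,j_{N+1,N}b_N,\,j_{N+2,N}b_N,\dots\big),
\]
so that its $m$-th coordinate is $b_m$ for $m\le N$ and $j_{mN}b_N$ for $m>N$.

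I would then check that $\varphi_N$ is a well-defined ucp map into $\C(\S\d,j)$. Each of its coordinates is either a coordinate projection (for $m\le N$) or the composition of $j_{mN}$ with the projection onto the $N$-th block (for $m>N$), hence completely positive, and unitality of the $j_{nm}$ makes $\varphi_N$ unital; thus $\varphi_N$ is ucp as a map into $\prod_m M_{k_m}$. Moreover $\varphi_N(b_1,\dots,b_N)$ differs from the basic net $j_{\blob N}b_N\in\C(\S\d,j)$ only in the finitely many coordinates $m<N$, hence by an element of $\C_0(\S\d)$, so it indeed lies in $\C(\S\d,j)$. Both $\psi_N$ and $\varphi_N$ thus run between $\C(\S\d,j)$ and the finite-dimensional $C^*$-algebra $\bigoplus_{i=1}^N M_{k_i}$, which embeds unitally into the single matrix algebra $M_{k_1+\dots+k_N}$, so one may equally regard the approximation as factoring through matrix algebras.

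The remaining and decisive step is to show $\varphi_N\circ\psi_N\to\id$ in the point-norm topology. Since $\varphi_N\psi_N(a\d)$ agrees with $a\d$ in every coordinate $m\le N$, one has $\norm{\varphi_N\psi_N(a\d)-a\d}_\nets=\sup_{m>N}\norm{a_m-j_{mN}a_N}$. The $j$-convergence of $a\d$ only provides control of the limit superior, $\seminorm{a\d-j_{\blob N}a_N}=\limsup_m\norm{a_m-j_{mN}a_N}\to0$, and the crux is to upgrade this to control of the full supremum over the tail. Given $\eps>0$, I would first choose $K$ with $\limsup_m\norm{a_m-j_{mK}a_K}<\eps$ and then $L>K$ with $\norm{a_m-j_{mK}a_K}<\eps$ for all $m\ge L$. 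For $N\ge L$ and $m>N$, strictness yields $j_{mK}=j_{mN}j_{NK}$, whence
\[
  a_m-j_{mN}a_N=\big(a_m-j_{mK}a_K\big)-j_{mN}\big(a_N-j_{NK}a_K\big),
\]
and, as $j_{mN}$ is contractive, $\norm{a_m-j_{mN}a_N}\le\norm{a_m-j_{mK}a_K}+\norm{a_N-j_{NK}a_K}<2\eps$. Hence $\sup_{m>N}\norm{a_m-j_{mN}a_N}\le2\eps$ for all $N\ge L$, so $\varphi_N\psi_N(a\d)\to a\d$ for every $a\d\in\C(\S\d,j)$. This exhibits the completely positive approximation and proves that $\C(\S\d,j)$ is nuclear. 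I expect this passage from the limit superior to the supremum over the tail to be the main obstacle: it is exactly where the strict transitivity $j_{mK}=j_{mN}j_{NK}$ enters, and for a merely asymptotically transitive system the identity would hold only up to error terms and this estimate would require additional care.
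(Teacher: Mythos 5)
Your proposal is correct and uses exactly the maps the paper does: the coordinate cut-offs $\psi_N$ into $\bigoplus_{i=1}^N M_{k_i}$ and the tail-extension maps $\varphi_N$ built from the strict connecting maps $j_{m N}$. The only (minor) difference is in verifying $\varphi_N\psi_N\to\id$ point-norm: the paper checks it on the dense set of eventually-basic sequences, where $\varphi_N\psi_N$ is eventually the identity, whereas you prove it directly on all of $\C(\S\d,j)$ via the $2\eps$-estimate from strict transitivity $j_{mK}=j_{mN}j_{NK}$ — both arguments are valid and essentially equivalent.
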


\begin{proof}
To show $\C(\S\d,j)\subset \nets(\S\d)=\prod_n M_{k_n}$ is nuclear, we present a sequence of completely positive approximations through matrix algebras. For each $m\in \mathbb{N}$, let $\psi_m:\C(\S\d,j)\to \bigoplus_{j=1}^m M_{k_j}$ be the restriction of the canonical compression $\prod_n M_{k_n}\to \bigoplus_{j=1}^m M_{k_j}$ to $\C(\S\d,j)$, and let $\varphi_m:\bigoplus_{j=1}^m M_{k_j}\to \prod_n M_{k_n}$ be defined for $(x_1,...,x_m)\in \bigoplus_{j=1}^m M_{k_j}$ as 
    \[(x_1,...,x_m)\mapsto (x_1,...,x_m,\j_{m+1,m}(x_m),\j_{m+2,m}(x_m),...).\]
   Note that we can identify $\C(\S\d,j)$ with the closure of the set 
\[X_0:=\{(x_n)_n\in \prod_n M_{k_n} : \exists\ m\geq 1, x_{n+1}=\j_{n+1,n}(x_n), \forall\ n\geq m\}.\]
Hence the image of $\varphi_m$ clearly lies in $\C(\S\d,j)$, and to check nuclearity, we only need to check it for $(x_n)_n\in X_0$. For such a sequence, we choose $m>0$ such that for all $n\geq m$, $\j_{n+1,n}(x_n)=x_{n+1}$. Then for all $n\geq m$, we have $\varphi_n(\psi_n(x))=x$. It follows that $\C(\S\d,j)$ is nuclear.
\end{proof}

\begin{proof}[Proof of Theorem~\ref{thm: strict iff}]
    The forward direction is Theorem~\ref{thm:nuclearity_iff}. For the converse, we recall from Proposition~\ref{prop: limit as quotient} that the limit $\S$ of the strict system $(\S,j)$ is completely order isomorphic to $\C(\S\d,j)/\C_0(\S\d)$ identified as a subspace of $\nets(\S\d)/\C_0(\S\d)$ where here $\nets(\S\d)=\prod_n M_{k_n}$ and $\C_0(\S\d)=\bigoplus_n M_{k_n}$. By Lemma~\ref{lem: preimage is nuc}, $\C(\S\d,j)$ is nuclear, and hence by Lemma~\ref{lem: nuc quotient}, so is $\C(\S\d,j)/\C_0(\S\d)$.
\end{proof}

\begin{remark}
    Note that Theorem~\ref{thm: strict iff} also works with matrix algebras replaced by finite-dimensional von Neumann algebras, i.e., algebras of the form $\bigoplus_{j=1}^m M_{n_j}$.
\end{remark}


\subsection{Noncommutative Choquet theory}\label{sect. nc}
In this section, we provide an interpretation of Theorem~\ref{thm: strict iff} in the context of noncommutative Choquet theory.  We start by surveying the relevant classical results and framework and followed by a brief account of the noncommutative Choquet theory of Davidson and Kennedy (\cite{Davidson2016, DK24}).

For a compact, convex, metrizable set $K$, we consider the $^*$-vector space of its continuous $\mathbb{C}$-valued affine functions $A(K)$, which we view as an operator subsystem of $C(K)$. Through Kadison duality over $\mathbb{C}$ \cite{VT09}, we have an affine homeomorphism between $S(A(K))$ and $K$ via point evaluations. 
Numerous authors have characterized when $K$ is a Choquet simplex via properties of $K$, and as observed in \cite[Section 4]{Davidson2016}, these have nice formulations in our setting as well. In particular, Namioka and Phelps \cite{NP69} proved that $K$ is a simplex precisely when $A(K)$ is nuclear in the function system category (meaning there is a unique Archimedean order unit space structure that one can put on its tensor product with any other function system), and Effros \cite{Eff67} proved that $K$ is a simplex if and only if $A(K)^{**}$ is injective in that category. It follows from \cite{J68} and \cite{DVS68} (independently) that the projective limit of finite-dimensional simplices with affine connecting maps is a simplex, and in \cite{LL71}, Lazar and Lindenstrauss proved that the converse also holds (Corollary to \cite[Theorem 5.2]{LL71}). Together, we have the classical case:

\begin{theorem}[Jellett, Davies--Vincent--Smith, Lazar--Lindenstrauss]
    A compact convex metrizable set is a simplex if and only if it is the projective limit of finite-dimensional simplices.
\end{theorem}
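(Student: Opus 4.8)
The plan is to reduce the classical theorem, via Kadison duality and the theorem of Namioka and Phelps, to the commutative shadow of Theorem~\ref{thm: strict iff}, and then to rerun the arguments of \cref{sec:nuclear} with the matrix building blocks $M_{k_n}$ replaced by the commutative function systems $\ell^\infty_{n}=A(\Delta^{n-1})$. First I would record that metrizability of $K$ is equivalent to separability of $C(K)$, hence of its operator subsystem $A(K)$. By Kadison duality over $\mathbb{C}$ \cite{VT09}, $K$ is affinely homeomorphic to $S(A(K))$, and this duality is contravariant: a projective system of compact convex sets $\Delta^{n_1}\leftarrow\Delta^{n_2}\leftarrow\cdots$ with continuous affine connecting maps corresponds to an inductive system of affine function systems $A(\Delta^{n_1})\to A(\Delta^{n_2})\to\cdots$ with unital positive connecting maps, where $A(\Delta^{n})\cong\ell^\infty_{n+1}$, and taking state spaces turns an inductive limit of function systems into the projective limit of the corresponding simplices. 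Thus the theorem is equivalent to the statement that a separable function system $A(K)$ is nuclear in the function system category if and only if it is the (strict) inductive limit of a sequence $(\ell^\infty_{n_k})_k$; the equivalence ``$K$ is a simplex $\iff A(K)$ is nuclear'' is exactly Namioka--Phelps \cite{NP69}.

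For the forward direction I would invoke the function-system completely positive approximation property: a separable nuclear function system admits unital positive maps $\varphi_n\colon A(K)\to\ell^\infty_{m_n}$ and $\psi_n\colon\ell^\infty_{m_n}\to A(K)$ with $\psi_n\circ\varphi_n\to\id$ in the point-norm topology, the crucial point being that the intermediate spaces are the commutative $\ell^\infty_{m_n}$ rather than full matrix algebras. Exactly as in the construction of the induced soft system preceding Theorem~\ref{thm:nuclearity_iff}, these yield a split soft inductive system with $\S_n=\ell^\infty_{m_n}$ and limit space $A(K)$. Separability lets me take $N=\mathbb{N}$, and the summability-plus-compactness argument of Remark~\ref{rem:summable to coherent} together with the proof of Theorem~\ref{thm:nuclearity_iff} strictifies the system (using Lemma~\ref{lem:subsystem limit} and Corollary~\ref{cor:coi limits}), producing a strict inductive sequence of $\ell^\infty_{n_k}$'s with limit $A(K)$. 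Dualizing realizes $K$ as the projective limit of the finite-dimensional simplices $\Delta^{n_k-1}$; this is the Lazar--Lindenstrauss direction \cite{LL71}.

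For the converse I would run the commutative analogues of Lemmas~\ref{lem: preimage is nuc} and~\ref{lem: nuc quotient}. Given a strict inductive sequence $(\S,j)$ with $\S_k=\ell^\infty_{n_k}$, the net system $\C(\S\d,j)$ is a nuclear function subsystem of $\prod_k\ell^\infty_{n_k}$: the maps $\psi_m,\varphi_m$ built in Lemma~\ref{lem: preimage is nuc} are positive and factor through the commutative $\bigoplus_{j\le m}\ell^\infty_{n_j}$, so the argument transfers verbatim. Then $\C_0(\S\d)=\bigoplus_k\ell^\infty_{n_k}$ is a closed ideal, and by the function-system version of Lemma~\ref{lem: nuc quotient} the quotient $\C(\S\d,j)/\C_0(\S\d)$, identified with $A(K)$ through Proposition~\ref{prop: limit as quotient}, is nuclear, whence $K$ is a simplex. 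Dualized, this is the Jellett / Davies--Vincent-Smith direction \cite{J68,DVS68}.

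The main obstacle is the forward direction, and specifically producing the approximating maps with genuinely commutative building blocks $\ell^\infty_{m_n}$: operator-system nuclearity of $A(K)$ would only supply factorizations through matrix algebras, and it is the simplex structure, via Namioka--Phelps \cite{NP69}, that lets one arrange the intermediate spaces to be commutative, which is precisely the nontrivial classical content of Lazar--Lindenstrauss. A secondary technical point is verifying that the double-dual and injectivity arguments of Lemma~\ref{lem: nuc quotient} go through in the function system category, using Effros's \cite{Eff67} characterization of simplices via injectivity of $A(K)^{**}$ in place of the operator-system statement, and that the inductive/projective duality is compatible with the strict limits constructed, so that the resulting limit is genuinely $A(K)$ rather than merely some abstractly isomorphic nuclear function system.
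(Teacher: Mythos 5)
Your forward direction is circular, and you half-admit it. The entire mathematical content of the Lazar--Lindenstrauss direction is the existence of point-norm approximations of $\id_{A(K)}$ by unital positive maps factoring through the \emph{commutative} systems $\ell^\infty_{m_n}$; you invoke this ``function-system completely positive approximation property'' as if it were an off-the-shelf consequence of Namioka--Phelps \cite{NP69}, but it is not. Namioka--Phelps characterizes simpliciality by a tensor-product condition (uniqueness of the AOU cross norm on $A(K)\otimes T$), and passing from tensor nuclearity to approximate factorizations through $\ell^\infty_n$ is exactly the function-system analogue of the Choi--Effros/Kirchberg theorem (Theorem~\ref{thm:CEK}) --- a deep implication whose known commutative proofs (via Lazar's selection theorem and $L^1$-predual theory, or peaked partitions of unity) essentially \emph{are} the proof of \cite{LL71}. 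Nothing in this paper supplies it either: the paper's machinery (the induced soft system before Theorem~\ref{thm:nuclearity_iff}, summability and strictification via Remark~\ref{rem:summable to coherent} and Lemma~\ref{lem:subsystem limit}) only upgrades an already-given CPAP to a strict sequence, and operator-system nuclearity of $A(K)$ would hand you matrix-algebra factorizations, not commutative ones. Naming this as ``the main obstacle'' does not overcome it; as written, the hard half of the theorem is assumed rather than proved.

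For context, the paper does not prove this theorem at all: it is quoted as classical, with \cite{J68} and \cite{DVS68} cited for the ``projective limits of simplices are simplices'' direction and the Corollary to Theorem 5.2 of \cite{LL71} for the converse, the dual formulation resting on Kadison duality over $\mathbb{C}$ \cite{VT09} and on \cite{NP69}. Your converse half, by contrast, is a genuine and essentially viable rederivation from the paper's machinery: since $\ell^\infty_{n}\cong A(\Delta^{n-1})$ is a finite-dimensional commutative von Neumann algebra and unital positive maps into commutative $\Cstar$-algebras are automatically completely positive, a strict inductive sequence of $\ell^\infty_{n_k}$'s falls under Theorem~\ref{thm: strict iff} together with the remark following it (or under your commutative rereading of Lemmas~\ref{lem: preimage is nuc} and~\ref{lem: nuc quotient}, with Effros's injectivity characterization \cite{Eff67} replacing the operator-system bidual results). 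You should, however, make explicit the final step from nuclearity of $A(K)$ back to simpliciality of $K$ --- e.g., via \cite[Theorem 3.5]{han2011approximation} and the commutative bidual statement --- rather than leaving it implicit in the dualization. So: the Jellett/Davies--Vincent-Smith half can be recovered along your lines, but the Lazar--Lindenstrauss half cannot, and your proposal does not close that gap.
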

Dualizing this (and identifying $A(\Delta^n)$ with $\mathbb{C}^{n+1}$) we get:  
\begin{theorem}[Jellett, Davies--Vincent--Smith, Lazar--Lindenstrauss, Namioka--Phelps]
    A separable function system is nuclear if and only if it is the inductive limit of finite-dimensional commutative von Neumann algebras.
\end{theorem}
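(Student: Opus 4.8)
The plan is to obtain this statement purely by dualizing the preceding theorem through Kadison duality, translating the simplex condition with the Namioka--Phelps criterion; it is a classical result rather than an application of the operator-system machinery of \cref{sec:nuclear}. First I would record the correspondence: for a compact, convex, metrizable $K$, point evaluations give an affine homeomorphism $S(A(K))\cong K$ \cite{VT09}, and metrizability of $K$ is equivalent to separability of $A(K)$. Since every separable function system is of the form $A(K)$ for $K=S(A(K))$, it suffices to argue on the geometric side. Under this contravariant correspondence a projective system $(K_k,\pi_{lk})$ of compact convex sets with unital affine connecting maps dualizes to an inductive system $(A(K_k),\pi_{lk}^{*})$ of function systems with unital positive connecting maps, and one has $A(\varprojlim_k K_k)\cong\varinjlim_k A(K_k)$. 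The finite-dimensional simplex $\Delta^n$ satisfies $A(\Delta^n)\cong\mathbb{C}^{n+1}$, which is exactly a finite-dimensional commutative von Neumann algebra, and conversely every $\mathbb{C}^m$ arises this way as $A(\Delta^{m-1})$.

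Next I would insert the two classical inputs. The preceding theorem asserts that $K$ is a simplex if and only if it is the projective limit of finite-dimensional simplices, while Namioka--Phelps \cite{NP69} give that $K$ is a simplex if and only if $A(K)$ is nuclear in the function system category. Feeding both through the duality of the first step yields exactly the asserted equivalence: $A(K)$ is nuclear if and only if it is the inductive limit of the spaces $A(\Delta^{n_k})\cong\mathbb{C}^{n_k+1}$, that is, of finite-dimensional commutative von Neumann algebras.

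I expect the main technical point to be the exactness of the duality between projective and inductive limits, i.e.\ verifying that $\varinjlim_k A(K_k)$ agrees, as a function system, with $A(\varprojlim_k K_k)$, so that the limit here is genuinely the sequential (strict) inductive limit of the $\mathbb{C}^{n_k+1}$ in the sense of \cref{sec:op_sys}. This is where separability is used: for a countable tower of metrizable simplices the projective limit is again compact, convex and metrizable, and its affine functions are the closed union of those pulled back from the finite stages, which matches the Banach-space inductive limit of the finite-dimensional commutative algebras. For the forward implication one may also bypass the classical Lazar--Lindenstrauss input and instead feed a function-system completely positive approximation of $A(K)$ through commutative $\ell^\infty_n$ directly into the construction of Theorem~\ref{thm:nuclearity_iff}, producing a strict inductive system with commutative finite-dimensional building blocks.
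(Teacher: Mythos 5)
Your proposal is correct and matches the paper's treatment exactly: the paper offers no independent argument for this theorem, obtaining it precisely by dualizing the preceding Jellett/Davies--Vincent-Smith/Lazar--Lindenstrauss theorem through complex Kadison duality \cite{VT09} (with metrizability of $K$ corresponding to separability of $A(K)$ and $A(\Delta^n)\cong\mathbb{C}^{n+1}$) and translating the simplex condition via Namioka--Phelps \cite{NP69}. Your closing remarks --- the exactness of the projective/inductive duality and the alternative forward route through Theorem~\ref{thm:nuclearity_iff} --- are sensible additions but go beyond what the paper itself records.
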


Now we move to the noncommutative convexity setting in the sense of Davidson and Kennedy \cite{Davidson2016, DK24}, which builds on the matrix convexity theory from \cite{Wit81,Wit84,EW97,WW99}.
 In lieu of a full account of the theory, we offer an intuition based on a categorical duality stemming from Davidson and Kennedy's noncommutative generalization of Kadison's representation theorem \cite[Theorem 3.2.3]{DK24}. We recommend \cite[Section 3]{DK24} and \cite{Davidson2016} to any reader desiring further details. 

To define an nc convex set, we start with an operator space $E$ and a cardinal $\kappa$ at least the size of some dense subset of $E$. A compact nc convex set is a graded subset $K=\bigsqcup_{n\leq \kappa} K_n\subseteq \bigsqcup_{n\leq \kappa} M_n(E)$ which is closed under arbitrary direct sums and compressions and where each $K_n\subseteq M_n(E)$ is compact. 
The key example of a compact nc convex set is the nc state space of an operator system $\mathcal{S}$ \cite[Example 2.2.6]{DK24}: 
\[S_{nc}(\mathcal{S}):=\bigsqcup_{n\leq \kappa} \text{UCP}(\mathcal{S},M_n)\subseteq \bigsqcup_{n\leq \kappa}M_n(\mathcal{S}^*),\] 
where $\text{UCP}(\mathcal{S},M_n)$ consists of all unital completely positive (u.c.p.) maps $\mathcal{S}\to M_n$ with $n$ ranging over all cardinals $n$ less than or equal to some $\kappa$, which is determined by the operator space $\mathcal{S}^*$ as above. For example, $S_{nc}(M_n)$ consists of all ucp maps $M_n\to B(\mathcal{H}_m)$ with $\mathcal{H}_m$ a fixed Hilbert space of dimension $m\leq \aleph_0$. A continuous nc affine map between nc convex sets is a graded map which is continuous at each level and preserves direct sums and compressions  (see \cite[Definition 2.5.1]{DK24}). For a compact nc convex set $K=\bigsqcup_{n\leq \kappa}K_n$, the space $A(K)$ of continuous nc affine functions from $K$ to $\bigsqcup_{n\leq \kappa} M_n$ is a closed operator system. 

Now we can give Davidson and Kennedy's noncommutative analogue of Kadison's representation theorem \cite[Theorem 3.2.3]{DK24}: if $\mathcal{S}$ is a closed operator system, then $A(S_{nc}(\mathcal{S}))$ is completely order isomorphic to $\mathcal{S}$. This leads to a duality between the category $\text{OpSys}$ of closed operator systems with u.c.p.\ maps and the category $\text{NCConv}$ of compact nc convex sets with continuous nc affine maps (\cite[Section 3]{DK24}). In particular by \cite[Theorem 3.2.5]{DK24}, there is a contravariant functor $A:\text{NCConv}\to \text{OpSys}$ with inverse $A^{-1}$ implementing the dual equivalence. This functor maps a compact nc convex set $K$ to $A(K)$ and a continuous nc affine map $\theta:K\to L$ to the u.c.p.\ map $A(\theta):A(L)\to A(K)$ given by 
\[A(\theta)(b)(x)=b(\theta(x)),\ \text{ for }\ b\in A(L), x\in K.\]
Its inverse maps an operator system $\mathcal{S}$ to $S_{nc}(\mathcal{S})$ and a u.c.p.\ map $\varphi: \mathcal{S}\to \mathcal{T}$ to the continuous nc affine map $A^{-1}(\varphi):S_{nc}(\mathcal{T})\to S_{nc}(\mathcal{S})$ given by
\[b(A^{-1}(\varphi)(x))=\varphi(b)(x),\ \text{ for }\ b\in \mathcal{S}, x\in S_{nc}(\mathcal{T}).\]
Thus from a strong inductive sequence of operator systems $(\S,j)$, we get a projective sequence of their noncommutative state spaces and vice versa.


In \cite{KS22}, Kennedy and Shamovich introduced nc (Choquet) simplices, which are characterized in \cite[Theorem 6.2]{KS22} to be the compact nc convex sets $K$ for which $A(K)^{**}$ is completely order isomorphic to a von Neumann algebra (i.e., a $\Cstar$-system in the language of \cite{KW98}). An extensive class of examples comes from the nuclear operator systems. Indeed, combining deep work of Kirchberg \cite{Kir95}, Choi--Effros \cite{CE77}, and Effros--Ozawa--Ruan \cite{EOR01} with Sakai's theorem \cite[Thm.~1.16.7]{sakai_c-algebras_1998} yields that an operator system $\mathcal{S}$ is nuclear if and only if $\mathcal{S}^{**}$ is completely order isomorphic to an \emph{injective} von Neumann algebra (see \cite[Theorem 3.5]{han2011approximation}). In finite dimensions, \cite{CE77} tells us $\mathcal{S}$ is nuclear if and only if $\mathcal{S}$ is itself completely order isomorphic to a von Neumann algebra (and hence to some finite direct sum of matrix algebras). Since the space $A(\Delta^n)$ of continuous $\mathbb{C}$-valued affine functions on a classical $n$-simplex can be identified with $\mathbb{C}^{n+1}$ (with each function determined by where it sends $\partial_e(\Delta^n)$), this suggest nc state spaces of finite-dimensional von Neumann algebras as the nc analogue of finite-dimensional simplices.  

There is now more nuance from the classical function system setting in the sense that being the affine function space of an nc simplex is more general than being nuclear. Hence for clarity we call the nc state space for a nuclear operator system a \emph{nuclear nc simplex}. 
Hence we arrive at dual statement of Theorem~\ref{thm: strict iff}:

\begin{corollary}\label{cor: strict iff*}
    Let $K$ be a compact nc convex set with separable $A(K)$. Then $K$ is a nuclear nc simplex if and only if $K$ is the projective limit of nc state spaces of finite-dimensional von Neumann algebras. 
\end{corollary}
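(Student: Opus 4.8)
The plan is to deduce Corollary~\ref{cor: strict iff*} directly from Theorem~\ref{thm: strict iff} by transporting it across the categorical duality $A:\text{NCConv}\to\text{OpSys}$ established in \cite{DK24}. The key observation is that the contravariant functor $A$ (with inverse $S_{nc}(\,\cdot\,)$) is a dual equivalence, so it must interchange inductive limits in $\text{OpSys}$ with projective limits in $\text{NCConv}$. Concretely, I would first recall the dictionary: by Kadison duality $A(S_{nc}(\S))\cong\S$ and $S_{nc}(A(K))\cong K$, so the statement "$K$ is a nuclear nc simplex" is by \emph{definition} equivalent to "$A(K)$ is a (separable) nuclear operator system," since a nuclear nc simplex is precisely the nc state space of a nuclear operator system and $A(S_{nc}(\S))\cong\S$. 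The separability hypothesis on $A(K)$ matches the separability hypothesis in Theorem~\ref{thm: strict iff} exactly.

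The main step is to verify that $A$ carries strict inductive sequences of matrix algebras (or finite-dimensional von Neumann algebras, using the Remark after Theorem~\ref{thm: strict iff}) to projective sequences of their nc state spaces, and that it sends the inductive limit to the projective limit. First I would apply Theorem~\ref{thm: strict iff} to the separable nuclear operator system $\S:=A(K)$, obtaining a strict inductive sequence $(\S_n,j_{nm})$ with each $\S_n$ a finite-dimensional von Neumann algebra and $\S\cong\varinjlim \S_n$. Applying the functor $A^{-1}=S_{nc}(\,\cdot\,)$ turns each ucp connecting map $j_{nm}:\S_m\to\S_n$ into a continuous nc affine map $S_{nc}(\S_n)\to S_{nc}(\S_m)$, and turns the inductive maps $j_{\infty n}:\S_n\to\S$ into continuous nc affine maps $S_{nc}(\S)\to S_{nc}(\S_n)$, yielding a projective sequence whose terms $S_{nc}(\S_n)$ are precisely the nc state spaces of finite-dimensional von Neumann algebras. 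Since $S_{nc}(M_{k})$ is the nc analogue of a finite-dimensional simplex, this exhibits $K\cong S_{nc}(\S)$ as a candidate projective limit. The converse direction runs symmetrically: given $K$ presented as a projective limit of such nc state spaces, apply $A$ to obtain an inductive sequence of finite-dimensional von Neumann algebras whose limit is $A(K)$, so by Theorem~\ref{thm: strict iff} $A(K)$ is nuclear and $K$ is a nuclear nc simplex.

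The step I expect to require the most care is not any single inequality but the bookkeeping that the dual equivalence genuinely exchanges the two limit constructions: one must check that $S_{nc}(\varinjlim\S_n)$ is the projective limit of $S_{nc}(\S_n)$ in $\text{NCConv}$, i.e., that $A^{-1}$ (being part of an equivalence of categories) preserves the relevant universal property. Because $A$ and $S_{nc}$ are mutually inverse equivalences by \cite[Theorem 3.2.5]{DK24}, a colimit in $\text{OpSys}$ is sent to a limit in $\text{NCConv}$ automatically; the only genuine content is identifying the abstract categorical limit with the concrete projective limit of nc convex sets, which amounts to recognizing that the nc state-space functor is continuous with respect to these constructions. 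I would make this precise by appealing to the universal property of the soft/strict inductive limit (Proposition~\ref{prop:uproplimit}) on the operator-system side and dualizing it term by term, so that the projective limit of the $S_{nc}(\S_n)$ satisfies the dual universal property and is therefore isomorphic to $S_{nc}(\S)\cong K$. The remaining verifications — that finite-dimensional von Neumann algebras are self-dual under the identification of a nuclear finite-dimensional operator system with a finite direct sum of matrix algebras, and that separability is preserved — are routine given the cited results \cite{CE77,KS22}.
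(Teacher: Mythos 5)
Your proposal is correct and follows essentially the same route as the paper, which obtains Corollary~\ref{cor: strict iff*} precisely by dualizing Theorem~\ref{thm: strict iff} through the Davidson--Kennedy equivalence $A \colon \text{NCConv} \to \text{OpSys}$, noting that a nuclear nc simplex is by definition the nc state space of a nuclear operator system and that the contravariant equivalence carries strict inductive sequences of finite-dimensional von Neumann algebras to projective sequences of their nc state spaces. Your extra care in matching the abstract categorical limit with the concrete projective limit (via the universal property of Proposition~\ref{prop:uproplimit}) is a point the paper itself leaves implicit, so nothing is missing relative to the paper's argument.
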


\printbibliography

\end{document}